\newtheorem{lemat}{Lemma}[section]
\newtheorem{tw}{Theorem}[section]
\newtheorem{ass}{Assumption}[section]
\newtheorem{defi}{Definition}[section]
\newtheorem*{uwaga}{Remark}
\numberwithin{equation}{section}
\def\vc#1{\boldsymbol{#1}}
\def\ten#1{\boldsymbol{#1}}
\newcommand{\dxdt}{\,{\rm{d}}x\,{\rm{d}}t}
\newcommand{\dx}{\,{\rm{d}}x}
\newcommand{\ds}{\,{\rm{d}}s}
\newcommand{\dt}{\,{\rm{d}}t}
\newcommand{\dtau}{\,{\rm{d}}\tau}
\newcommand{\braket}[1]{(\!(#1)\!)}
\def\div{\rm{div\,}}
\title{Thermo-visco-elasticity for Norton-Hoff-type models with homogeneous thermal expansion}
\author[P. Gwiazda]{Piotr Gwiazda}
\address{
Institute of Mathematics, Polish Academy of Sciences, \'{S}niadeckich 8, 00-656 Warsaw, Poland
\newline
Institute of Applied Mathematics and Mechanics, University of Warsaw,  Banacha 2, 02-097 Warsaw, Poland}
\email{pgwiazda@mimuw.edu.pl}
\author[F. Z. Klawe]{Filip Z. Klawe}
\address{Institute of Applied Mathematics, Heidelberg University, Im Neuenheimer Feld 205, 69120 Heidelberg, Germany
\newline
Institute of Applied Mathematics and Mechanics, University of Warsaw,  Banacha 2, 02-097 Warsaw, Poland}
\email{fzklawe@mimuw.edu.pl}
\author[S. Owczarek]{Sebastian Owczarek}
\address{Faculty of Mathematics and Information Science, Warsaw University of Technology, Koszykowa 75, 00-662 Warsaw, Poland}
\email{s.owczarek@mini.pw.edu.pl}
\thanks{Acknowledgement: 
P.G. received support from the National Science Centre (Poland), 2015/18/M/ST1/00075.
F.Z.K. is thankful to MAThematics Center Heidelberg (MATCH). F.Z.K. was partially supported by the NCN grant, the number of decisions 2014/13/N/ST1/02626.
S.O. is thankful to the Warsaw Center of Mathematics and Computer Science (WCMCS) for partially support during his stay on the postdoctoral research position at the Institute of Applied Mathematics and Mechanics, University of Warsaw.  }
\keywords{thermo-visco-elasticity, Norton-Hoff-type, Galerkin approximation, thermal expansion, monotonicity method}
\subjclass[2000]{74C10, 35Q74, 74F05}
\begin{document}

\begin{abstract}
In this work we study a quasi-static evolution of thermo-visco-elastic model with homogeneous thermal expansion. We assume that material is subject to two kinds of  mechanical deformations: elastic and inelastic. Inelastic deformation is related to a hardening rule of Norton-Hoff type. Appearance of inelastic deformation causes transformation of mechanical energy into thermal one, hence we also take into the consideration changes of material's temperature. 

The novelty of this paper is to take into account the thermal expansion of material. We are proposing linearisation of the model for homogeneous thermal expansion, which preserves symmetry of system and therefore total energy is conserved. Linearisation of material's thermal expansion is performed in definition of Cauchy stress tensor and in heat equation. In previous studies, it was done in different way. Considering of such linearisation leads to system where the coupling between temperature and displacement occurs in two places, i.e. in the constitutive function for the evolution of visco-elastic strain and in the additional term in the heat equation, in comparison to models without thermal expansion. The second coupling was not considered previously. For such system of equations we prove the existence of solutions. Moreover, we obtain existence of displacement's time derivative, which has not been done previously.

\end{abstract}

\maketitle

\section{Introduction}

The subject of this work is to analyze the class of models describing response of thermo-visco-elastic material to applied external forces and the heat flux through the boundary. Thermo-visco-elastic system of equations captures displacement, temperature and visco-elastic strain of the body. It is a consequence of physical principles, such as balance of momentum and balance of energy, cf. \cite{GreenNaghdi,1,LandauLifshitz}, supplemented by two constitutive relations: definition of Cauchy stress tensor and evolutionary equation for visco-elastic strain, which describes the material properties.

Reactions of visco-elastic materials may be different for different loads speed. Our interest is to examine slow motion of materials where inertial forces are negligible, see e.g. \cite{ChelRacke,DuvautLions,johnson1,suquet1,suquet2,temam1,temam2}. Additionally, we consider the model with infinitesimal displacement (dependence between the Cauchy stress tensor and elastic part of strain is linear, i.e. generalized Hooke’s law holds), the process holds in the neighborhood of some reference temperature and, what is new here, we consider problem which also takes into account thermal expansion of material.

We  assume that the  body $\Omega \subset \mathbb{R}^3$ is an open bounded set with a $C^2$ boundary and moreover, it is homogeneous in space. The material undergoes two kinds of  deformations: elastic and inelastic. By the first type we understand reversible deformations, by the second - irreversible. In this paper we deal with visco-elastic type of inelastic deformation.
The problem is captured by the following system
\begin{equation}
\begin{aligned}
- \div \ten{\sigma} &= \vc{f} ,
\\
\ten{\sigma} &= \ten{T} -\alpha \ten{I},
\\
\ten{T} &= \ten{D}(\ten{\varepsilon}(\vc{u}) - \ten{\varepsilon}^{\bf p} ),
\\
\ten{\varepsilon}^{\bf p}_t &= \ten{G}(\theta,\ten{T}^d) ,
\\
\theta_t - \Delta\theta + \alpha {\div} \vc{u}_t&= \ten{T}^d:\ten{G}(\theta,\ten{T}^d) ,
\end{aligned}
\label{full_system}
\end{equation}
which are fulfilled in $\Omega\times(0,T)$ and where $\vc{u}:\Omega\times\mathbb{R}_+\rightarrow \mathbb{R}^3$ describes displacement of the material, $\theta:\Omega\times\mathbb{R}_+\rightarrow\mathbb{R}$ stays for temperature of the material and $\ten{\varepsilon}^{\bf p}:\Omega\times\mathbb{R}_+\rightarrow \mathcal{S}^3_d$ is visco-elastic strain tensor. We denote by $\mathcal{S}^3$ the set of symmetric $3 \times 3$-matrices with real entries and by $\mathcal{S}^3_d$ a subset of $\mathcal{S}^3$ which contains traceless matrices. By $\ten{T}^d$ we mean the deviatoric part (traceless) of the tensor $\ten{T}$, i.e. $\ten{T}^d=\ten{T}-\frac{1}{3}tr(\ten{T})\ten{I}$, where $\ten{I}$ is the identity matrix from $\mathcal{S}^3$. Additionally, $\ten{\varepsilon}(\vc{u})$ denotes the symmetric part of the gradient of displacement~$\vc{u}$, i.e. $\ten{\varepsilon}(\vc{u})=\frac{1}{2}(\nabla\vc{u} + \nabla^T\vc{u})$.  The volume force is denoted by $\vc{f}:\Omega\times\mathbb{R}_+\rightarrow \mathbb{R}^3$. 

We complete the considered problem by formulating the initial conditions
\begin{equation}
\begin{aligned}
\theta(x,0)&=\theta_0(x), 
\\
\ten{\varepsilon}^{\bf p}(x,0)&=\ten{\varepsilon}^{\bf p}_0(x),
\end{aligned}
\label{init_0}
\end{equation}
in $\Omega$ and boundary conditions
\begin{equation}
\begin{aligned}
\vc{u}&=\vc{g}, \\
\frac{\partial \theta}{\partial \vc{n}}&=g_{\theta},
\end{aligned}
\label{boun_0}
\end{equation}
on $\partial\Omega\times(0,T)$.

The visco-elastic strain tensor is described by the evolutionary equation with prescribed constitutive function $\ten{G}(\cdot,\cdot)$. Different assumptions made on function $\ten{G}(\cdot,\cdot)$ lead to creation of different models. The subject of current paper is to consider the hardening rule defined by Norton-Hoff-type constitutive law, see forthcoming Assumption \ref{ass_G}. In the literature, many different models were described, see \cite{DuvautLions,Alber,ChelRacke,ChelNeffOwczarek14,owcz2} or \cite{PhDFilip,1}. Norton-Hoff or Norton-Hoff-type models were studied e.g. in \cite{1,KlaweOwczarek, ChelminskiOwczarekthermoI, ChelminskiOwczarekthermoII}

The function $\ten{\sigma}:\Omega\times\mathbb{R}_+\rightarrow \mathcal{S}^3$ is the Cauchy stress tensor. It may be divided into two parts: mechanical (elastic) and thermal one. The mechanical part is $\ten{T}=\ten{D}(\ten{\varepsilon}(\vc{u}) - \ten{\varepsilon}^{\bf p})$, where the operator $\ten{D}:\mathcal{S}^3\rightarrow\mathcal{S}^3$ is linear, positively definite and bounded. 
The operator $\ten{D}$ is a four-index matrix, i.e. $\ten{D}=\left\{d_{i,j,k,l}\right\}_{i,j,k,l=1}^3$ and the following equalities  hold
\begin{equation}
d_{i,j,k,l} = d_{j,i,k,l},
\quad
d_{i,j,k,l} = d_{i,j,l,k}
\quad
\mbox{and}
\quad
d_{i,j,k,l} = d_{k,l,i,j}
\quad
\forall i,j,k,l=1,2,3 .
\end{equation}
These equivalences are consequences of angular momentum conservation. For simplicity, we assume that $d_{i,j,k,l}$ are constants. 

The second part of Cauchy stress tensor is a thermal one. This term did not appear in \cite{1,tve-Orlicz,GKSG}. The novelty of this paper is that we added thermal expansion to model considered previously, which extends result for more general class of models. Taking thermal expansion into account, heat equation also changes and it may be understood as follows: an additional transfer of energy between its mechanical and thermal parts appears as a consequence of material's thermal expansion. There appears a nonlinear term $\alpha(\theta-\theta_R) \div \vc{u}_t$ in heat equation (derivation of thermo-visco-elastic model may by found e.g. in \cite{PhDFilip}), which causes main problems during the analysis. Here, we denote by $\theta_R$ a temperature in which thermal stress is equal to zero. There were many papers in which such issue was presented. Authors deal with it in different ways, e.g. by linearisation of nonlinear term, see \cite{JR,Haupt,Bartczak}, or by adding additional damping term in momentum equation, see \cite{ChelminskiOwczarekthermoI,ChelminskiOwczarekthermoII}, which  gives missing regularity estimates for time derivative of displacement as a consequence of estimate for momentum equation. Without this additional damping term time derivative of displacement appears only in heat equation. However, none of these ways guarantee that model catch all of mathematical and physical properties of considered phenomenon. Our idea is to make different linearisation than one presented e.g. \cite{JR,Haupt,Bartczak}.

Now, we shall explain our reasons for considering thermal part of Cauchy stress tensor in form of $\alpha\ten{I}$ and additional term in heat equation in form of $\alpha\div \vc{u}_t$. Model derivation, see \cite{PhDFilip,LandauLifshitz}, leads to obtaining strictly different form of \eqref{full_system}. However, making suitable assumptions (slow and long-time behaviour of materials, process holds in the neighborhood of reference temperature etc.) it may be considered in this form. The first assumption leads to omitting the acceleration term in momentum equation, whereas the second one gives us opportunity to make a thermal part of Cauchy stress tensor simplified.

There is a big class of materials which are not subject of thermal expansion, that is $\alpha=0$. This case was a subject of our  previous studies, see \cite{1,tve-Orlicz,GKSG}. There are also materials which change their volume with changes of temperature. If their volume increases with increasing temperature then $\alpha>0$, otherwise $\alpha<0$. Moreover, thermal expansion of material depends on conditions in which material is examined, e.g. it may depend on temperature, pressure etc..

At this moment we should distinguish between two values of temperatures, which will be subject of the following discussion. The first one is temperature for which thermal stress does not appear. We denote it by $\theta_R$. The second one is temperature in the neighborhood of which the process holds. We call it a reference temperature and denote it by $\bar{\theta}$. We assume that thermal strain is proportional to difference between temperature $\theta$ and $\theta_R$, i.e. it is equal to $\alpha(\theta -\theta_R)\ten{I}$, where $\alpha$ is constant (positive or negative). Materials with such properties were subject of study in \cite{Lyon,wachtman,Martinek} and many others. Of course, thermal stress may be defined more generally, where $\alpha(\cdot)$ is a smooth function of $\theta-\theta_R$, such that $\alpha(0)=0$. The following reasoning will work also in this case. However, we will focus on linear dependency. Then, assumption on form of thermal stress causes that there appears also coupling in heat equation and we have
\begin{equation}
\begin{aligned}
\ten{\sigma} & = \ten{T} - \alpha (\theta -\theta_R)\ten{I},
\\
\theta_t - \kappa\Delta\theta +\alpha (\theta -\theta_R) \div \vc{u}_t  & = \ten{T}^d:\ten{G}(\theta,\ten{T}^d).
\end{aligned}
\label{eq:1.5}
\end{equation}
The main issue which we have to deal with here is nonlinear term in heat equation, i.e. $\alpha\theta \div \vc{u}_t$. Linearisation of this term solves this problem. Appearance of this term in heat equation is a consequence of definition of Cauchy stress tensor. Hence, if we assume that process holds in the neighborhood of temperature $\bar{\theta}$ and we linearise temperature in term $\alpha(\theta-\theta_R) \div \vc{u}_t$ in heat equation without making a linearisation of Cauchy stress tensor, we will lose the symmetry in system of equations. Similar assumptions were done in \cite{JR,Haupt}, where authors assume that $(\theta -\theta_R)- \bar{\theta}$ is sufficiently small only in heat equation. Then it leads to
\begin{equation}
\begin{aligned}
\ten{\sigma} &= \ten{T}  -\alpha (\theta - \theta_R) \ten{I} ,
\\
\theta_t - \kappa\Delta\theta + \gamma \div \vc{u}_t&= \ten{T}^d:\ten{G}(\theta,\ten{T}^d),
\end{aligned}
\end{equation}
where $\gamma$ is a constant which approximate $\alpha(\theta - \theta_R)$. This linearisation leads to the system where energy is not conserved because  of broken symmetry between definition of Cauchy stress tensor and heat equation. Our idea is to make linearisation in the different way, to avoid this unexpected property of system which describe physical phenomenon. We linearise term $\theta-\theta_R$ in both equations \eqref{eq:1.5}, i.e. we assume that $\theta-\theta_R$ may be approximated by $\bar{\theta}$. Thus,
\begin{equation}
\begin{aligned}
\ten{\sigma} &= \ten{T}  -\alpha \bar{\theta} \ten{I}, 
\\
\theta_t - \kappa\Delta\theta + \alpha\bar{\theta} \div \vc{u}_t&= \ten{T}^d:\ten{G}(\theta,\ten{T}^d) .
\end{aligned}
\label{eq:1.9}
\end{equation}
Henceforth, we focus on equation for Cauchy stress tensor and heat equation in this form. It has not been examined previously. One should notice two aspects of such linearisation. Firstly, instead of \cite{Haupt,JR} it leads to the system which conserves the energy. And secondly, in our linearisation there appears non-zero stress for homogeneous data problem. It may be understood as constant pressure caused by external force, i.e. the material is compressed (for positive $\alpha\bar{\theta}$) or stretched (for negative $\alpha\bar{\theta}$). Furthermore, approximation (Taylor's series) made for temperature is cut off on the same level in all equations which do not take place in \cite{Haupt,JR,Bartczak}. Since $\alpha,\bar{\theta}$ are constants, in the rest of the paper, we will denote the product of $\alpha\bar{\theta}$ by $\alpha$. Additionally, we assume that it is positive.

It is worth to mention \cite{ChelminskiOwczarekthermoI} and \cite{ChelminskiOwczarekthermoII}, where authors considered the thermo-visco-elastic system of equations with non-linear thermal expansion. In those papers authors did not assume that process holds in the neighborhood of reference temperature as occurred in the system \eqref{eq:1.9}. We use the original notation from those paper but forthcoming function $f$ is the same as considered here function $\alpha$. Thermal part of Cauchy stress tensor is equal to $-f(\theta)\ten{I}$, where $f:\mathbb{R}\rightarrow\mathbb{R}$ is continuous and satisfies suitable growth conditions (motivation for assumptions on the function $f$ is similar to the conditions studied in \cite{BlanchardGuibe97} and \cite{BlanchardGuibe00}).  However, this nonlinear thermal part of stress imposed to add a damping term to momentum equation, which allows to control the divergence of velocity in the heat equation. In the current study, we do not add this term in system \eqref{full_system}. Moreover, we make more general assumptions on function $\ten{G}$ than in \cite{ChelminskiOwczarekthermoI,ChelminskiOwczarekthermoII}, where it did not depend on temperature and had more detailed growth conditions with respect to second variable.

Similar way to deal with such problem was presented in \cite{BR1,BR,RouBart2}, where authors studied the thermal-visco-plasticity system for Kelvin-Voigt-type material. Kelvin-Voigt-type materials have got additional term in Cauchy stress tensor, i.e. time derivative of deformations gradient, which regularise the solution. Mathematically, authors obtain a PDEs with different order and this additional term allows to control the divergence of velocity in the heat equation. Nevertheless, physical motivations for this additional term are different than ones presented in \cite{ChelminskiOwczarekthermoI,ChelminskiOwczarekthermoII} but regularisations effects are the same. In \cite{BR1,BR,RouBart2} evolution of the plastic strain is governed by Prandtl-Reuss flow rule. Material's thermal expansion appears in \cite{BR,RouBart2}. Since flow rules in these papers did not depend on temperature, the only coupling effect between displacement and temperature was caused by thermal part of Cauchy stress tensor. On the contrary, in \cite{BR1} Kelvin-Voigt material without thermal expansion was considered and a coupling between displacement and temperature was a consequence of temperatures dependent flow rule. It is worth to emphasize that in \eqref{full_system} coupling between thermal and mechanical effects takes place similarly in thermal expansion and flow rule.

\begin{ass}
The function $\ten{G}(\theta,\ten{T}^d)$ is  continuous with respect to   $\theta$ and $\ten{T}^d$ and satisfies for $p\ge 2$
the following conditions:
\begin{itemize}
\item[a)] $(\ten{G}(\theta,\ten{T}^d_1)-\ten{G}(\theta,\ten{T}^d_2)):(\ten{T}^d_1-\ten{T}^d_2) \geq 0$, for all $\ten{T}_1^d,\ten{T}_2^d \in \mathcal{S}^3_d$ and $\theta\in \mathbb{R}$;
\item[b)] $|\ten{G}(\theta,\ten{T}^d)| \leq C(1 + |\ten{T}^d|)^{p-1}$, where $\ten{T}^d\in\mathcal{S}^3_d$, $\theta\in \mathbb{R}$;
\item[c)] $\ten{G}(\theta,\ten{T}^d):\ten{T}^d \geq \beta |\ten{T}^d|^p$, where $\ten{T}^d\in\mathcal{S}^3_d$, $\theta\in \mathbb{R}$,
\end{itemize}
\label{ass_G}
where  $C$ and $\beta$ are positive constants,  independent of the temperature $\theta$.
\end{ass}
The subject of the present study is to focus on the main issues which appear during the analysis of models including thermal expansion. Norton-Hoff-type  model is a good prototype to develop general theory. It is also a good approximation of Prandtl-Reuss law of elastic-perfectly-plastic deformation, see \cite{ChelRacke,temam2}. 

One may observe that in the system \eqref{full_system} displacement and temperature depend on each other. This leads to many technical problems which we have to deal with during the analysis of this model. It would seem that the omission of explicit dependent of temperature in definition of Cauchy stress tensor leads to displacement which is independent of temperature. However, we shall observe that temperature appears in the evolutionary equation for the visco-elastic strain tensor and it have implicit impact on displacement.

Before we formulate definition of weak solutions and state the main theorem of this paper let us introduce notation $W^{1,p'}_{\vc{g}}(\Omega,\mathbb{R}^3):=\left\{\vc{u} \in W^{1,p'}(\Omega,\mathbb{R}^3): \vc{u}=\vc{g} \mbox{ on } \partial\Omega \right\}$ and $W^{1,p'}_{\vc{g_t}}(\Omega,\mathbb{R}^3):=\left\{\vc{v} \in W^{1,p'}(\Omega,\mathbb{R}^3):\vc{v}=\vc{g_t} \mbox{ on } \partial\Omega \right\}$, where $\vc{g_t}$ denotes time derivative of function $g$ and $p'=p/(p-1)$. This allows us to define the solution to thermo-visco-elastic model in transparent way.

\begin{defi}
Let $p\geq 2$ and $q\in(1,\frac{5}{4})$. 
The triple of functions 
\begin{equation}
\begin{split}
\vc{u}&\in L^{p'}(0,T,W^{1,p'}_{\vc{g}}(\Omega,\mathbb{R}^3)) \ \mbox{with}\  \vc{u}_t\in L^{p'}(0,T,W^{1,p'}_{\vc{g_t}}(\Omega,\mathbb{R}^3)),
\\
\ten{T}&\in L^2(0,T,L^2(\Omega,\mathcal{S}^3))
\end{split}
\nonumber
\end{equation}
and 
\begin{equation}
\theta\in L^q(0,T,W^{1,q}(\Omega))\cap C([0,T],W^{-2,2}(\Omega))
\nonumber
\end{equation}
is a weak solution to the system \eqref{full_system} if
\begin{equation}
\int_0^T\int_{\Omega}(\ten{T} - \alpha\ten{I}):\nabla\vc{\varphi} \dxdt 
= \int_0^T\int_{\Omega}\vc{f}\cdot \vc{\varphi} \dxdt ,
\end{equation}
where 
\begin{equation}
\ten{T}=\ten{D}(\ten{\varepsilon}(\vc{u}) - \ten{\varepsilon}^{\bf p}),
\end{equation}
and
\begin{equation}
\begin{split}
-\int_0^T\int_{\Omega} \theta\phi_t \dxdt -
\int_{\Omega} \theta_0(x)\phi(0,x) \dx 
& + \int_0^T\int_{\Omega}  \nabla\theta\cdot\nabla\phi  \dxdt -
\int_0^T\int_{\partial\Omega}g_{\theta}\phi  \dxdt 
\\
&+ \int_0^T\int_{\Omega}\alpha\div(\vc{u}_t)\phi \dxdt
 = 
\int_0^T\int_{\Omega} \ten{T}^d:\ten{G}(\theta,\ten{T}^d)\phi \dxdt,
\end{split}
\end{equation}
holds for every test function $\vc{\varphi}\in C^{\infty}([0,T],C^{\infty}_c(\Omega,\mathbb{R}^3))$ and $\phi\in C^{\infty}_c([-\infty,T),C^{\infty}(\Omega))$. Furthermore, the visco-elastic strain tensor can be recovered from the equation on its evolution, i.e.
\begin{equation}
\ten{\varepsilon}^{\bf p}(x,t) = \ten{\varepsilon}^{\bf p}_0(x) + \int_0^t \ten{G}(\theta(x,\tau),\ten{T}^d(x,\tau)) \dtau,
\end{equation}
for a.e. $x\in\Omega$ and $t\in [0,T)$. Moreover, $\ten{\varepsilon}^{\bf p} \in W^{1,p'}(0,T,L^{p'}(\Omega,\mathcal{S}^3_d))$.
\end{defi}

\begin{tw}
Let $p\geq 2$ and let initial conditions satisfy $\theta_0 \in L^1(\Omega)$, $\ten{\varepsilon}^{\bf p}_0\in L^2(\Omega,\mathcal{S}^3_d)$, boundary conditions satisfy $\vc{g}\in W^{1,p}(0,T, W^{1-\frac{1}{p},p}(\partial\Omega,\mathbb{R}^3))$, $g_{\theta}\in L^2(0,T,L^2(\partial\Omega))$ and volume force $\vc{f}\in W^{1,p}(0,T,W^{-1,p}(\Omega,\mathbb{R}^3))$ and function $\ten{G}(\cdot,\cdot)$ satisfy the  Assumption \ref{ass_G}. Then there exists a weak solution to system \eqref{full_system}.
\label{thm:main}
\end{tw}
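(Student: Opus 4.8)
The plan is to combine a Galerkin approximation of the heat equation with the monotonicity method for the flow rule, exploiting the total-energy conservation built into the symmetric linearisation \eqref{eq:1.9}. First I would treat the momentum balance as a solution operator. For a given plastic strain $\ten{\varepsilon}^{\bf p}$, the system $-\div \ten{D}(\ten{\varepsilon}(\vc{u}) - \ten{\varepsilon}^{\bf p}) = \vc{f}$ with $\vc{u} = \vc{g}$ on $\partial\Omega$ is linear and elliptic; since $\ten{D}$ is symmetric, bounded and positive definite, Korn's inequality and Lax--Milgram give a unique $\vc{u}$ (hence $\ten{T}$) depending continuously on $\ten{\varepsilon}^{\bf p}$ and the data. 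The constant term drops out of the interior equation since $\int_\Omega \alpha\ten{I}:\nabla\vc{\varphi} \dx = \alpha\int_\Omega\div\vc{\varphi}\dx = 0$ for compactly supported $\vc{\varphi}$. Differentiating this elliptic problem in time, which is legitimate because $\vc{f}$ and $\vc{g}$ are $W^{1,p}$ in time, produces $\vc{u}_t$ and $\ten{T}_t$ with the same spatial regularity; this is the mechanism yielding the time derivative of the displacement. The problem thus reduces to the coupled evolution of $(\theta,\ten{\varepsilon}^{\bf p})$ through $\ten{\varepsilon}^{\bf p}_t = \ten{G}(\theta,\ten{T}^d)$ and the heat equation.

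For the approximation I would truncate the temperature dependence and the $L^1$ source of the heat equation (replacing $\ten{G}(\theta,\cdot)$ by $\ten{G}(T_k(\theta),\cdot)$ and bounding the source) and project the heat equation onto the span of the first $n$ Neumann eigenfunctions of $-\Delta$. At each level existence follows from Carath\'eodory ODE theory for the Galerkin coefficients composed with the elliptic solution operator, closed by a Schauder fixed point in the $\theta$--$\ten{\varepsilon}^{\bf p}$ coupling. The crucial a priori bound is the energy identity: testing the momentum balance with $\vc{u}_t$ and using $\ten{T} = \ten{D}(\ten{\varepsilon}(\vc{u})-\ten{\varepsilon}^{\bf p})$, $\ten{\varepsilon}^{\bf p}_t = \ten{G}$ gives
\[
\tfrac12\frac{d}{dt}\!\int_\Omega \ten{T}:\ten{D}^{-1}\ten{T}\dx + \int_\Omega \ten{T}^d:\ten{G}(\theta,\ten{T}^d)\dx - \alpha\!\int_\Omega \div\vc{u}_t\dx = \int_\Omega \vc{f}\cdot\vc{u}_t\dx + (\mathrm{boundary}),
\]
while integrating the heat equation over $\Omega$ produces the same two middle terms with opposite signs. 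Adding the two relations cancels both the coupling term $\alpha\int\div\vc{u}_t$ and the dissipation $\int\ten{T}^d:\ten{G}$, leaving a closed balance for $\tfrac12\int\ten{T}:\ten{D}^{-1}\ten{T} + \int\theta$ --- exactly the point of the symmetric linearisation. This yields $\ten{T}$ bounded in $L^\infty(0,T;L^2)$ and $\theta$ bounded in $L^\infty(0,T;L^1)$; coercivity (c) then gives $\ten{T}^d\in L^p$, growth (b) gives $\ten{\varepsilon}^{\bf p}_t = \ten{G}\in L^{p'}$, and the differentiated elliptic problem gives $\vc{u}_t\in L^{p'}(0,T;W^{1,p'})$. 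For the temperature gradient I would run the Boccardo--Gallou\"et argument, testing the heat equation with truncations of $\theta$, to obtain $\theta\in L^q(0,T;W^{1,q})$ for every $q<\tfrac{5}{4}$, the standard exponent for an $L^1$ right-hand side in space dimension three.

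Finally I would pass to the limit. Weak compactness handles all linear terms; in particular $\div\vc{u}_t$ converges weakly in $L^{p'}$, so the coupling term tested against the smooth $\phi$ passes directly. An Aubin--Lions argument, using the $W^{1,q}$ bound together with the bound on $\theta_t$ read off from the equation, gives strong $L^1$ and hence a.e.\ convergence of $\theta$. The flow-rule nonlinearity is identified by the Minty--Browder trick: with $\ten{T}^d_n\rightharpoonup\ten{T}^d$ and $\ten{G}(\theta_n,\ten{T}^d_n)\rightharpoonup\ten{\chi}$, monotonicity (a) in the $\ten{T}^d$ variable together with continuity in $\theta$ (via the a.e.\ convergence of $\theta_n$) forces $\ten{\chi} = \ten{G}(\theta,\ten{T}^d)$, provided one controls $\limsup\int\ten{T}^d_n:\ten{G}(\theta_n,\ten{T}^d_n)$.

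I expect the main obstacle to be the passage to the limit in the heat source $\ten{T}^d:\ten{G}(\theta,\ten{T}^d)$, which is only bounded in $L^1$ and therefore not amenable to weak convergence. The resolution is to use the total-energy balance above to show that $\int_0^T\!\int_\Omega \ten{T}^d_n:\ten{G}(\theta_n,\ten{T}^d_n)\dxdt$ converges to the corresponding limit integral, and then, combining this convergence of integrals with the a.e.\ convergence of the integrand, to upgrade to strong $L^1$ convergence of the source by a Vitali/equi-integrability argument. This simultaneously closes the Minty step and validates the weak heat equation, completing the existence proof.
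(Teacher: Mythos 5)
Your overall skeleton matches the paper's: a mechanical energy estimate giving $\ten{T}\in L^\infty(0,T;L^2)$ and $\ten{T}^d\in L^p$, recovery of $\vc{u}_t$ from the time-differentiated momentum balance combined with $\ten{\varepsilon}^{\bf p}_t=\ten{G}\in L^{p'}$, the Boccardo--Gallou\"{e}t estimate with $q<\frac{5}{4}$, and Minty--Browder closed by a limsup energy inequality. However, three steps have genuine gaps. First, the existence of approximate solutions: you keep $\ten{\varepsilon}^{\bf p}$ un-discretized and invoke Carath\'eodory theory ``closed by a Schauder fixed point''. Carath\'eodory's theorem is a finite-dimensional statement, whereas your flow rule, with $\ten{T}$ defined through the elliptic solve, is an ODE in an infinite-dimensional space whose right-hand side is only continuous and monotone (Assumption \ref{ass_G} provides no Lipschitz condition); moreover the fixed-point map $\ten{\varepsilon}^{\bf p}\mapsto\ten{T}\mapsto\ten{G}\mapsto\ten{\varepsilon}^{\bf p}_0+\int_0^t\ten{G}\ds$ involves no smoothing in the spatial variable, so it is not compact and Schauder's theorem does not apply. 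This is exactly why the paper also discretizes $\ten{\varepsilon}^{\bf p}$, with the specially constructed basis $\{\ten{\zeta}^k_m\}$ of $V_k^s$, so that the whole approximate problem \eqref{app_system} is a finite-dimensional ODE system; the price is Lemma \ref{wsp_org_epa}, the paper's most delicate estimate. Second, the placement of the Boccardo--Gallou\"{e}t argument: you claim the $L^q(0,T;W^{1,q})$ bound at the approximate level by ``testing the heat equation with truncations of $\theta$'', but a truncation $\mathcal{T}_j(\theta_n)$ of a Galerkin function does not lie in the span of the first $n$ eigenfunctions, so it is not an admissible test function there. You do introduce both a truncation parameter and a Galerkin parameter, but the limits must then be taken in a definite order: first the Galerkin dimension of the temperature tends to infinity at fixed truncation level (where the source is bounded and standard $L^2$ parabolic theory applies); only afterwards does the temperature solve an honest PDE that can be tested with its own truncations, and only then may the truncation be removed. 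This forced ordering is the whole reason for the paper's two-level scheme and its sequence of limits $l\to\infty$, then $k\to\infty$; your single ``pass to the limit'' conflates the two, and the truncation test fails where you place it.

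Third, your treatment of the heat source cannot work as stated. You propose to upgrade $\ten{T}^d_n:\ten{G}(\theta_n,\ten{T}^d_n)$ to \emph{strong} $L^1$ convergence by Vitali, ``combining convergence of integrals with the a.e.\ convergence of the integrand'' --- but there is no a.e.\ convergence to combine with: $\ten{T}^d_n$ converges only weakly in $L^p$, and part a) of Assumption \ref{ass_G} is plain monotonicity, not strict monotonicity, so the Minty identification $\ten{\chi}=\ten{G}(\theta,\ten{T}^d)$ never yields pointwise convergence of $\ten{T}^d_n$; if $\ten{G}(\theta,\cdot)$ is constant in some directions, persistent oscillations of $\ten{T}^d_n$ are compatible with all your bounds and strong convergence of the source may simply fail. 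What the paper proves instead --- and what suffices both for the Boccardo--Gallou\"{e}t compactness of $\{\theta_k\}$ and for passing to the limit in the weak heat equation --- is \emph{weak} $L^1$ convergence of the non-negative products $\ten{G}:(\tilde{\ten{T}}^d+\ten{T}^d_n)$, deduced from convergence of their integrals (the energy identity of Lemma \ref{lm:8} and its analogue in the $k$-passage) together with monotonicity and the a.e.\ convergence of the temperature alone. A final bookkeeping point: in your added total-energy balance the dissipation $\int_\Omega\ten{T}^d:\ten{G}\dx$ has been cancelled, so the $L^p$ bound on $\ten{T}^d$ via coercivity must be read off from the mechanical balance alone, and $\int_\Omega\theta\dx$ controls $\|\theta\|_{L^1(\Omega)}$ only once a sign for $\theta$ is known; the paper accordingly keeps these two estimates separate.
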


The proof regarding existence of solutions to thermo-visco-elastic model with thermal expansions and Norton-Hoff-type hardening rule is done with use of two level Galerkin approximation. It means that we have independent parameters for approximation of displacement and temperature. This method was previously used for continuum mechanic models, e.g. see \cite{1,GKSG,tve-Orlicz,KlaweOwczarek}, or for models describing fluid motion, see \cite{BFM,BMR}. The main reason to use two level approximation here is low regularity of right-hand side of heat equation. Since product $\ten{G}(\theta, \ten{T}^d):\ten{T}^d$ is only an integrable function we have to use technique which gives us existence of solution to parabolic equation with low regular data. There are two possible approaches which may be applied here: Boccardo and Gallou\"{e}t approach or renormalised solutions, see \cite{Boccardo,BlanchardMurat,Blanchard}. We focus on Boccardo and Gallou\"{e}t, since it makes this paper more clear. However, also renormalised solutions may be applied for continuum mechanics problem with law regularity of data, see \cite{tve-Orlicz}.

We have to use two level approximation because of technical part of proof presented by Boccardo and Gallou\"{e}t. We have to test approximate heat equation by truncation of its solution. Approximate solutions are constructed as finite dimensional approximations, see Appendix \ref{sec:constr}. Construction of basis functions does not guarantee that after truncation the approximate solution will belong to the same finite dimensional space. Due to this fact we use different parameter of approximation regarding to displacement and temperature. We will firstly make a limit passage with parameter corresponding to approximation of temperature to obtain a sequence of approximate temperature in infinite dimensional space. For such functions, their truncations belong to the same space. Together with truncation of right-hand side of heat equation and initial data on level corresponding to range of Galerkin approximation for displacement, it guarantees that Boccardo and Gallou\"{e}t approach may be applied.

Since present considerations are following partially similarly as presented in \cite{1}, we skip some parts of the proof, which may be found in \cite{1}. 

All functions appearing in this paper are functions of position $x$ and time $t$. We often omit the variables of the function and write $\vc{u}$ instead of $\vc{u}(x,t)$. All of the computation are conducted in Lagrangian coordinates. In view of the fact that the displacement is small, the stress tensor in Lagrangian coordinates is approximated by the stress tensor in Eulerian coordinates. This is a standard way of considering the inelastic models, for more details see \cite[Chapter 13.2]{temammiranville}. Moreover, we denote vectors by $\vc{v}$ (small bold letters) and matrices by $\ten{T}$ (capital bold letters).

The rest of this paper is organised as follows: In Section \ref{sec:fist} we present proof of main theorem. Appendix \ref{sec:trans} is dedicated to transformation of system into homogeneous boundary value problem. In Appendix \ref{sec:constr} we present a construction of bases which are used to obtain approximate solutions. In Appendix \ref{sec:Lemma} we proved Lemma \ref{wsp_org_epa}. Finally, in Appendix \ref{sec:BG} we recalled Boccardo and Gallou\"{e}t approach to parabolic equation with Neumann boundary condition.

\section{Proof of Theorem \ref{thm:main}}
\label{sec:fist}
The aim of this section is to present proof of existence of solution regarding to Norton-Hoff-type models with thermal expansion. We focus on the main problems which appear during the analysis of models including thermal expansion. The proof is divided into a few steps. Firstly, we define the approximate solutions and prove their existence. Then, we make limit passages with $l\to \infty$ and then with $k\to \infty$.

\subsection{Construction of approximate solutions}
\label{sec:cons1}
We start the proof from transforming the problem into homogeneous boundary value problem, see Appendix \ref{sec:trans}. Hence, \eqref{full_system} transforms into
\begin{equation}
\begin{aligned}
- {\div} \ten{\sigma} & =  0 ,
\\
\ten{\sigma} & = \ten{T} -\alpha \ten{I},
\\
\ten{T} & = \ten{D}(\ten{\varepsilon}(\vc{u}) - \ten{\varepsilon}^{\bf p} ),
\\
\ten{\varepsilon}^{\bf p}_t & =  \ten{G}(\tilde{\theta} + \theta ,  \tilde{\ten{T}}^d + \ten{T}^d),
\\
\theta_t - \Delta \theta + \alpha \div (\vc{u}_t)& =   \big(\tilde{\ten{T}}^d + \ten{T}^d\big):\ten{G}(\tilde{\theta} + \theta , \tilde{\ten{T}}^d + \ten{T}^d),
\end{aligned}
\label{full_system_22A}
\end{equation}
where $\tilde{\ten{T}}$ and $\tilde{\theta}$ are solutions of \eqref{war_brz_u} and \eqref{war_brz_t}, respectively. System \eqref{full_system_22A} is considered with initial and boundary conditions
\begin{equation}
\begin{array}{rcll}
\theta(\cdot,0) &=&  \theta_0 & \qquad \mbox{in } \Omega, \\
\ten{\varepsilon}^{\bf p}(\cdot,0) &=& \ten{\varepsilon}^{\bf p}_0 &\qquad \mbox{in } \Omega,
\\
\vc{u} &=& 0 & \qquad\mbox{on } \partial\Omega\times (0,T), \\
\frac{\partial\theta}{\partial \vc{n}} &=& 0 & \qquad\mbox{on } \partial\Omega\times (0,T), 
\end{array}
\label{in_bou_cond}
\end{equation}
where $\theta_0$ is a difference between given initial value and initial value $\tilde{\theta}_0$ which was used to {\it cut off} the boundary value problem, see \eqref{war_brz_t} in Appendix \ref{sec:trans}.

The approximate system of equations is constructed using the same argumentation as in \cite{1}. We present briefly this result in Appendix \ref{sec:constr}. Thus, for every $k,l\in\mathbb{N}$, we are looking for  
\begin{equation}
\begin{split}
\vc{u}_{k,l} & = \sum_{n=1}^k\alpha_{k,l}^n(t) \vc{w}_n,
 \\
\theta_{k,l} & = \sum_{m=1}^l\beta_{k,l}^m(t) v_m,
 \\
\ten{\varepsilon}^{\bf p}_{k,l} & = \sum_{n=1}^k\gamma_{k,l}^n(t) \ten{\varepsilon}(\vc{w}_n) + 
\sum_{m=1}^l\delta_{k,l}^m(t) \ten{\zeta}_m^k,
\end{split}
\label{eq:postac1}
\end{equation}
where $\{\vc{w}_n\}, \{v_m\}$ are bases for $W_0^{1,2}(\Omega,\mathbb{R}^3)$ and $W^{1,2}(\Omega)$ with homogeneous Neumann boundary conditions, respectively. Moreover, let us define $V_k^s:=V_k\cap H^s(\Omega,\mathcal{S}^3)$, for $\frac{3}{2}<s\le 2$ and $V_k:= (\mbox{span}\{\ten{\varepsilon}(\ten{w}_1),...,\ten{\varepsilon}(\ten{w}_k)\})^\bot$. Then we denote by $\{\ten{\zeta}^k_n\}_{n=1}^{\infty}$ the basis of $V_k^s$. For more details we refer the reader to Appendix \ref{sec:constr}.

The triple $(\vc{u}_{k,l}, \theta_{k,l}, \ten{\varepsilon}^{\bf p}_{k,l} )$ is a solution to approximate system of equations
\begin{equation}
\begin{array}{rll}
\int_{\Omega}  (\ten{T}_{k,l} - \alpha \ten{I}) : \ten{\varepsilon}(\vc{w}_n) \dx &= 0
& n=1,...,k ,
\\[1ex]
\ten{T}_{k,l} &= \ten{D}(\ten{\varepsilon}(\vc{u}_{k,l}) - \ten{\varepsilon}^{\bf p}_{k,l} ),
\\[1ex]
\int_{\Omega}(\ten{\varepsilon}^{\bf p}_{k,l})_t : \ten{D}\ten{\varepsilon}(\vc{w}_n) \dx &= 
\int_{\Omega}\ten{G}(\tilde{\theta} + \theta_{k,l} ,  \tilde{\ten{T}}^d + \ten{T}^d_{k,l}  ) : \ten{D}\ten{\varepsilon}(\vc{w}_n) \dx 
& n=1,...,k ,
\\[1ex]
\int_{\Omega}(\ten{\varepsilon}^{\bf p}_{k,l})_t : \ten{D}\ten{\zeta}^k_m \dx &= 
\int_{\Omega}\ten{G}(\tilde{\theta} + \theta_{k,l} , \tilde{\ten{T}}^d +  \ten{T}^d_{k,l}   ) : \ten{D}\ten{\zeta}^k_m \dx 
& m=1,...,l ,
\\[1ex]
\int_{\Omega}(\theta_{k,l})_t v_m\dx  + \int_{\Omega}\nabla\theta_{k,l}\cdot\nabla v_m \dx & +\int_{\Omega} \alpha {\div} (\vc{u}_{k,l})_t v_m
\\[1ex]
= \int_{\Omega} \mathcal{T}_k(  (\tilde{\ten{T}}^d + \ten{T}_{k,l}^d  ): & \ten{G}(\tilde{\theta} + \theta_{k,l} ,  \tilde{\ten{T}}^d + \ten{T}^d_{k,l} ) ) v_m \dx & m=1,...,l 
\end{array}
\label{app_system}
\end{equation}
for a.a. $t\in [0,T]$. By $\mathcal{T}_k(\cdot)$ we denoted truncation on level $k$, for definition see Appendix \ref{sec:constr}. For each of approximate equations (for each $k,l\in\mathbb{N}$) we have the initial conditions in the following form 
\begin{equation}
\begin{array}{rclc}
\left( \theta_{k,l}(x,0), v_m\right) &=& \left( \mathcal{T}_k(\theta_0),v_m \right) & m=1,..,l, \\
\left( \ten{\varepsilon}^{\bf p}_{k,l}(x,0), \ten{\varepsilon}(\vc{w}_n) \right)_{\ten{D}} &=& \left(\ten{\varepsilon}^{\bf p}_0, \ten{\varepsilon}(\vc{w}_n) \right)_{\ten{D}}
& n=1,..,k,
\\
\left( \ten{\varepsilon}^{\bf p}_{k,l}(x,0), \ten{\zeta}_m^k) \right)_{\ten{D}} &=& \left(\ten{\varepsilon}^{\bf p}_0, \ten{\zeta}^k_m \right)_{\ten{D}}
& m=1,...,l,
\end{array}
\label{eq:warunki_pocz_app}
\end{equation}
where $\big(\cdot,\cdot\big)$ denotes the inner product in $L^2(\Omega)$ and $\big(\cdot,\cdot\big)_{\ten{D}}$ the inner product in $L^2(\Omega,\mathcal{S}^3)$.

The selection of Galerkin bases and representation of the approximate solutions \eqref{eq:postac1} leads to
\begin{equation}
\lambda_n\alpha_{k,l}^n(t)-\lambda_n 
\gamma_{k,l}^n(t)-\alpha\int_{\Omega}{\div}(\vc{w}_n) \dx=0\,.
\end{equation}
where $\lambda_n$ is a corresponding eigenvalue to $\vc{w}_n$. Notice that the last integral on the left-hand side of above mentioned equation is equal to zero, therefore
\begin{equation}
\alpha_{k,l}^n(t)=
\gamma_{k,l}^n(t)\qquad \mbox{for } n=1,\dots,k\,.
\end{equation}
Let us define 
\begin{equation}
\vc{\xi}(t) = (\beta_{k,l}^1(t),...,\beta_{k,l}^l(t),\gamma_{k,l}^1(t),..., \gamma_{k,l}^k(t),\delta_{k,l}^1(t),...,\delta_{k,l}^l(t) )^T .
\nonumber
\end{equation} 
Moreover, for $m=1,...,l$,
\begin{equation}
\begin{split}
&(\beta_{k,l}^m(t))_t+\mu_m \beta_{k,l}^m(t)+\alpha\sum_{n=1}^k(\alpha_{k,l}^n(t))_t\int_{\Omega}{\div}(\vc{w}_n)v_m \dx
\\
&= \int_{\Omega} \mathcal{T}_k\Big(  \big((\tilde{\ten{T}}^d + \ten{D}\sum_{n=1}^k\alpha_{k,l}^n\ten{\varepsilon}(\vc{w}_n) - \ten{D}(\sum_{n=1}^k\gamma_{k,l}^n(t) \ten{\varepsilon}(\vc{w}_n) +
\sum_{n=1}^l
\delta_{k,l}^n(t) \ten{\zeta}_n ))^d   \big):\tilde{\ten{G}}(x,t,\vc{\xi}(t)) \Big) v_m \dx,
\end{split}
\label{app_system20aa}
\end{equation}
where $\mu_m$ is a corresponding eigenvalue to $v_m$ and 
\begin{equation}
\begin{split}
 \tilde{\ten{G}}(x,t,\vc{\xi}(t))
:& = \ten{G}(\tilde{\theta} + \theta_{k,l},\tilde{\ten{T}}^d + \ten{T}_{k,l}^d )
\\
&=\ten{G}\Big(\tilde{\theta} + \sum_{j=1}^l \beta_{k,l}^j(t) v_j(x) , \tilde{\ten{T}}^d -  \Big(
 \ten{D}\sum_{j=1}^l \delta_{k,l}^j(t) \ten{\zeta}_j^k   \Big)^d   \Big)\,.
\end{split}
\nonumber
\end{equation}
We also observe that
\begin{equation}
(\gamma_{k,l}^n(t))_t  = 
\frac{1}{\lambda_n} 
\int_{\Omega}\tilde{\ten{G}}(x,t,\vc{\xi}(t)) : \ten{D}\ten{\varepsilon}(\vc{w}_n) \dx.
\end{equation}
Thus, we obtain 
\begin{equation}
\begin{aligned}
(\gamma_{k,l}^n(t))_t  &= 
\frac{1}{\lambda_n} 
\int_{\Omega}\tilde{\ten{G}}(x,t,\vc{\xi}(t)) : \ten{D}\ten{\varepsilon}(\vc{w}_n) \dx  ,
\\
(\delta_{k,l}^m(t))_t  &=
\int_{\Omega}\tilde{\ten{G}}(x,t,\vc{\xi}(t)) : \ten{D}\ten{\zeta}_m^k\dx ,
\\
(\beta_{k,l}^m(t))_t &= \int_{\Omega} \mathcal{T}_k\Big(  \big( \tilde{\ten{T}}^d - \ten{D}(
\sum_{n=1}^l
\delta_{k,l}^n(t) \ten{\zeta}_n )^d  \big)
:\tilde{\ten{G}}(x,t,\vc{\xi}(t)) \Big) v_m \dx  - \mu_m \beta_{k,l}^m(t)\\
&\qquad -\alpha\sum_{n=1}^k\frac{1}{\lambda_n} 
\int_{\Omega}\tilde{\ten{G}}(x,t,\vc{\xi}(t)) : \ten{D}\ten{\varepsilon}(\vc{w}_n) \dx
\int_{\Omega}{\div}(\vc{w}_n)v_m \dx.
\end{aligned}
\label{app_system20}
\end{equation}

System \eqref{app_system20} with initial conditions \eqref{eq:warunki_pocz_app} can be equivalently written as the initial value problem 
\begin{equation}\label{47}
\begin{split}
&\frac{d\vc{\xi} }{dt}  = \vc{F}(\vc{\xi}(t),t),
\qquad
t\in [0,T),
\\
&\vc{\xi}(0) =\vc{\xi}_{0}.
\end{split}
\end{equation}
Note that function $\vc{F}(\cdot,\cdot)$ is measurable with respect to $t$, continuous with respect to $\vc{\xi}$ and for every $t$ function $\vc{F}(\cdot,t)$ is bounded. Let us fix $k,l\in\mathbb{N}$. According to Carath\'eodory theorem, see \cite[Theorem 3.4, Appendix]{maleknecas} or \cite[Appendix $(61)$]{zeidlerB}, there exist absolutely continuous functions $\beta_{k,l}^m(t)$, $\gamma_{k,l}^n(t)$ and $\delta_{k,l}^m(t)$ for every $n \leq k$ and $m \leq l$ on some time interval $[0,t^*]$. Moreover, for every $n \leq k$, there exists an absolutely continuous function $\alpha_{k,l}^n(t)$ on $[0,t^*]$.

\subsection{Boundedness of approximate solutions and limit passage with $l\to\infty$.}
\label{sec:boun1l}

In this section we prove uniform boundedness of approximate solutions. Some of them are uniform with respect to both approximation parameters and some of them are uniform only with respect to $l$. Since the first limit passage is done with $l$ going to $\infty$, we focus here on bounds which are uniform with respect to that parameter.

Due to the fact that some of the following estimates go similarly to ones presented in \cite{1}, we skip them. For those cases we only underline the difference between previous results and the following ones. In comparison to model without thermal expansion, the estimates regarding to $\{(\vc{u}_{k,l})_t\}$ appear, for which we present complete proofs.

\begin{defi}
We say that $\mathcal{E}$
is the potential energy if
\begin{equation}
\mathcal{E}(\ten{\varepsilon}(\vc{u}),\ten{\varepsilon}^{\bf p}): =  \frac{1}{2}\int_{\Omega}\ten{D}(\ten{\varepsilon}(\vc{u}) - \ten{\varepsilon}^{\bf p}):(\ten{\varepsilon}(\vc{u}) - \ten{\varepsilon}^{\bf p} ) \dx .
\nonumber
\end{equation}
\label{energia}
\end{defi}

\begin{lemat}
There exists a constant $C$ which
is uniform with respect to  $k$ and $l$ such that
\begin{equation}
\sup_{t\in [0,T]} \mathcal{E}(\ten{\varepsilon}(\vc{u}_{k,l}) , \ten{\varepsilon}^{\bf p}_{k,l}) (t)
+ c \| \tilde{\ten{T}}^d + \ten{T}_{k,l}^d\|^p_{L^p(0,T,L^p(\Omega))}
\leq
C.
\label{osz2}
\end{equation}
\label{pom_2}
\end{lemat}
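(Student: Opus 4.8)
\emph{Plan of the proof.} The natural strategy is a Galerkin energy estimate: differentiate the potential energy $\mathcal{E}(\ten{\varepsilon}(\vc{u}_{k,l}),\ten{\varepsilon}^{\bf p}_{k,l})$ in time along the approximate solution, identify the two resulting power terms through the equations \eqref{app_system}, and close the estimate by the coercivity and growth of $\ten{G}$ from Assumption \ref{ass_G}. Since $\ten{D}$ is symmetric and positive definite, one has $\mathcal{E}(\ten{\varepsilon}(\vc{u}_{k,l}),\ten{\varepsilon}^{\bf p}_{k,l})=\tfrac12\int_\Omega\ten{T}_{k,l}:(\ten{\varepsilon}(\vc{u}_{k,l})-\ten{\varepsilon}^{\bf p}_{k,l})\dx$, and differentiating gives
\begin{equation}
\frac{d}{dt}\mathcal{E}(\ten{\varepsilon}(\vc{u}_{k,l}),\ten{\varepsilon}^{\bf p}_{k,l})=\int_\Omega\ten{T}_{k,l}:\ten{\varepsilon}((\vc{u}_{k,l})_t)\dx-\int_\Omega\ten{T}_{k,l}:(\ten{\varepsilon}^{\bf p}_{k,l})_t\dx.
\nonumber
\end{equation}

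First I would show that the elastic power term vanishes. Since $\ten{\varepsilon}((\vc{u}_{k,l})_t)=\sum_{n=1}^k(\alpha_{k,l}^n)_t\,\ten{\varepsilon}(\vc{w}_n)$, taking the combination of the first equation in \eqref{app_system} with coefficients $(\alpha_{k,l}^n)_t$ gives $\int_\Omega\ten{T}_{k,l}:\ten{\varepsilon}((\vc{u}_{k,l})_t)\dx=\alpha\int_\Omega\div(\vc{u}_{k,l})_t\dx$, which is zero because $\int_\Omega\div\vc{w}_n\dx=0$ for every $n$ (the $\vc{w}_n$ vanish on $\partial\Omega$). This cancellation is exactly the manifestation of the energy-conserving, symmetric linearisation described in the introduction, and it is the step I expect to carry the most conceptual weight; everything else is a routine closing of the estimate.

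Next, I would use that the Galerkin construction yields $\alpha_{k,l}^n=\gamma_{k,l}^n$, whence $\ten{\varepsilon}(\vc{u}_{k,l})-\ten{\varepsilon}^{\bf p}_{k,l}=-\sum_{m=1}^l\delta_{k,l}^m\ten{\zeta}_m^k$ and therefore $\ten{T}_{k,l}=-\ten{D}\sum_{m=1}^l\delta_{k,l}^m\ten{\zeta}_m^k$ lies in the span of $\{\ten{D}\ten{\zeta}_m^k\}$. Testing the fourth equation in \eqref{app_system} against the coefficients $-\delta_{k,l}^m$ and using the symmetry of $\ten{D}$, I would obtain
\begin{equation}
\int_\Omega\ten{T}_{k,l}:(\ten{\varepsilon}^{\bf p}_{k,l})_t\dx=\int_\Omega\ten{G}(\tilde{\theta}+\theta_{k,l},\tilde{\ten{T}}^d+\ten{T}^d_{k,l}):\ten{T}_{k,l}\dx=\int_\Omega\ten{G}(\tilde{\theta}+\theta_{k,l},\tilde{\ten{T}}^d+\ten{T}^d_{k,l}):\ten{T}^d_{k,l}\dx,
\nonumber
\end{equation}
the last equality holding because $\ten{G}$ is traceless. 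Hence, writing $\ten{S}:=\tilde{\ten{T}}^d+\ten{T}^d_{k,l}$ and $\ten{T}^d_{k,l}=\ten{S}-\tilde{\ten{T}}^d$, the coercivity c) and growth b) from Assumption \ref{ass_G} combined with Young's inequality (whose small parameter is chosen so that the term $\epsilon(1+|\ten{S}|)^p$ is absorbed into $\beta|\ten{S}|^p$) give
\begin{equation}
\frac{d}{dt}\mathcal{E}(\ten{\varepsilon}(\vc{u}_{k,l}),\ten{\varepsilon}^{\bf p}_{k,l})+c\int_\Omega|\ten{S}|^p\dx\le C\big(1+\|\tilde{\ten{T}}^d(t)\|^p_{L^p(\Omega)}\big).
\nonumber
\end{equation}

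Finally I would integrate this differential inequality over $[0,t]$; since both terms on the left are non-negative, one reads off simultaneously the supremum bound on $\mathcal{E}$ and the $L^p$-bound on $\ten{S}=\tilde{\ten{T}}^d+\ten{T}^d_{k,l}$. It remains to bound the right-hand side uniformly in $k,l$: the initial energy is controlled because, by \eqref{eq:warunki_pocz_app}, $\ten{\varepsilon}^{\bf p}_{k,l}(0)$ is a $\ten{D}$-orthogonal projection of $\ten{\varepsilon}^{\bf p}_0\in L^2(\Omega,\mathcal{S}^3_d)$ and projections do not increase the norm, so $\mathcal{E}(\ten{\varepsilon}(\vc{u}_{k,l})(0),\ten{\varepsilon}^{\bf p}_{k,l}(0))\le C\|\ten{\varepsilon}^{\bf p}_0\|^2_{L^2(\Omega)}$; and $\tilde{\ten{T}}^d\in L^p(0,T;L^p(\Omega))$ follows from the regularity of the boundary-data lift constructed in Appendix \ref{sec:trans} out of $\vc{g}$ and $\vc{f}$. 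The remaining estimates are of the same type as in \cite{1}, so the only genuinely new point is the vanishing of the elastic power term.
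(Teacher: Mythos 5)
Your proposal is correct and follows essentially the same route as the paper's proof, which simply defers to \cite[Lemma 3.2]{1} and notes that the only new term, $\int_\Omega \alpha\ten{I}:(\ten{\varepsilon}(\vc{u}_{k,l}))_t\dx$, vanishes after integration by parts --- precisely the cancellation you single out as the key step. Your remaining steps (identification of the plastic power through \eqref{app_system}$_{(4)}$ using $\alpha_{k,l}^n=\gamma_{k,l}^n$, coercivity and growth of $\ten{G}$ closed by Young's inequality, and the projection bound on the initial energy) coincide with the argument of \cite{1} that the paper invokes.
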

The idea of this proof is the same as of \cite[Lemma 3.2]{1}. The only difference is that term $\int_{\Omega}\alpha\ten{I}(\ten{\varepsilon}(\vc{u}_{k,l}))_t\dx$ appears. After integration by parts it is equal to zero and has no influence of final result.

\begin{uwaga} 
From \eqref{osz2} we immediately observe that the sequence $\{\ten{T}_{k,l}^d\}$ is uniformly bounded in the space $L^p(0,T,L^p(\Omega,\mathcal{S}^3))$ with respect to $k$ and $l$.
Additionally, using the growth conditions on function $\ten{G}$, see Assumptions \ref{ass_G}, we conclude the uniform boundedness of the sequence 
$\{\ten{G}(\tilde{\theta} + \theta_{k,l},\tilde{\ten{T}}^d + \ten{T}_{k,l}^d )\}$ in the space $L^{p'}(0,T,L^{p'}(\Omega,\mathcal{S}^3))$. Thus, we obtain the uniform boundedness of the sequence $\{(\tilde{\ten{T}}^d + \ten{T}_{k,l}^d ):\ten{G}(\tilde{\theta} + \theta_{k,l},\tilde{\ten{T}}^d + \ten{T}_{k,l}^d )\}$ in $L^1(0,T,L^1(\Omega))$.
\label{wsp_ogr_T}
\end{uwaga}

\begin{lemat}
For every fixed $k$ the sequence $\{(\ten{\varepsilon}^{\bf p}_{k,l})_t\}$ is uniformly bounded in $L^{p'}(0,T,(H^{s}(\Omega,\mathcal{S}^3))')$ with respect to $l$.
\label{wsp_org_epa}
\end{lemat}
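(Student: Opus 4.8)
The plan is to exploit that, for every fixed $k$ and $l$, the time derivative $(\ten{\varepsilon}^{\bf p}_{k,l})_t$ lives in the fixed finite-dimensional space $W_{k,l}:=\mathrm{span}\{\ten{\varepsilon}(\vc{w}_n)\}_{n=1}^k\oplus\mathrm{span}\{\ten{\zeta}_m^k\}_{m=1}^l$, which is immediate upon differentiating \eqref{eq:postac1} in time (the coefficients being absolutely continuous by the Carath\'eodory step). Combining the third and fourth families of equations in \eqref{app_system} by linearity, I first record that $\big((\ten{\varepsilon}^{\bf p}_{k,l})_t,\ten{\psi}\big)_{\ten{D}}=\big(\ten{G}(\tilde{\theta}+\theta_{k,l},\tilde{\ten{T}}^d+\ten{T}^d_{k,l}),\ten{\psi}\big)_{\ten{D}}$ for every $\ten{\psi}\in W_{k,l}$. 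Since $(\ten{\varepsilon}^{\bf p}_{k,l})_t$ itself belongs to $W_{k,l}$, this identifies it as the orthogonal projection of $\ten{G}$ onto $W_{k,l}$ with respect to the inner product $(\cdot,\cdot)_{\ten{D}}$; denoting this projection by $P_{k,l}$, we have $(\ten{\varepsilon}^{\bf p}_{k,l})_t=P_{k,l}\ten{G}$.

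Next I estimate the $(H^{s})'$-norm by duality. Fix $\ten{\phi}\in H^{s}(\Omega,\mathcal{S}^3)$ with $\|\ten{\phi}\|_{H^{s}}\le 1$ and set $\ten{\chi}:=\ten{D}^{-1}\ten{\phi}$; since $\ten{D}$ has constant coefficients and is positive definite, $\ten{\chi}\in H^{s}$ with $\|\ten{\chi}\|_{H^{s}}\le c\,\|\ten{\phi}\|_{H^{s}}$. Using the symmetry of $\ten{D}$ and the $(\cdot,\cdot)_{\ten{D}}$-self-adjointness of $P_{k,l}$, the pairing rewrites as
\begin{equation}
\begin{aligned}
\int_{\Omega}(\ten{\varepsilon}^{\bf p}_{k,l})_t : \ten{\phi}\dx
&= \big((\ten{\varepsilon}^{\bf p}_{k,l})_t,\ten{\chi}\big)_{\ten{D}}
= \big(P_{k,l}\ten{G},\ten{\chi}\big)_{\ten{D}} \\
&= \big(\ten{G},P_{k,l}\ten{\chi}\big)_{\ten{D}}
= \int_{\Omega}\ten{G} : \ten{D}\,P_{k,l}\ten{\chi}\dx .
\end{aligned}
\nonumber
\end{equation}
By H\"older's inequality this is bounded by $\|\ten{G}\|_{L^{p'}(\Omega)}\,\|\ten{D}\,P_{k,l}\ten{\chi}\|_{L^{p}(\Omega)}\le C\|\ten{G}\|_{L^{p'}(\Omega)}\,\|P_{k,l}\ten{\chi}\|_{H^{s}(\Omega)}$, where the last step uses the boundedness of $\ten{D}$ together with the Sobolev embedding $H^{s}(\Omega)\hookrightarrow L^{\infty}(\Omega)\hookrightarrow L^{p}(\Omega)$, which is valid because $s>\tfrac32$. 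Thus everything reduces to the stability bound $\|P_{k,l}\ten{\chi}\|_{H^{s}}\le C\|\ten{\chi}\|_{H^{s}}$ with $C$ independent of $l$.

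This last estimate is the main obstacle. The difficulty is that $P_{k,l}$ is the $(\cdot,\cdot)_{\ten{D}}$-projection rather than the $L^2$-projection, and the number $l$ of functions $\ten{\zeta}_m^k$ grows, so one cannot simply invoke the contractivity of a single orthogonal projection in the $H^{s}$ norm. To overcome this I would rely on the construction of the basis in Appendix \ref{sec:constr}: the tensors $\{\ten{\zeta}_m^k\}$ span $V_k^s$ and are chosen as eigenfunctions, orthonormal in $L^2(\Omega,\mathcal{S}^3)$ and orthogonal in $H^{s}(\Omega,\mathcal{S}^3)$, so that truncation of an $H^{s}$ eigenexpansion does not increase the $H^{s}$ norm; the block $\mathrm{span}\{\ten{\varepsilon}(\vc{w}_n)\}_{n=1}^k$, which is $L^2$-orthogonal to $V_k$ by definition of $V_k$ and has fixed dimension $k$, contributes only a rank-$k$, $l$-independent correction. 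Hence $P_{k,l}$ stays bounded in $H^{s}$ uniformly in $l$.

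Granting this, the supremum over $\ten{\phi}$ yields the pointwise-in-time estimate $\|(\ten{\varepsilon}^{\bf p}_{k,l})_t\|_{(H^{s})'}\le C\,\|\ten{G}(\tilde{\theta}+\theta_{k,l},\tilde{\ten{T}}^d+\ten{T}^d_{k,l})\|_{L^{p'}(\Omega)}$. Raising to the power $p'$ and integrating over $(0,T)$, the uniform (in $l$) bound on $\ten{G}$ in $L^{p'}(0,T,L^{p'}(\Omega,\mathcal{S}^3))$ established in Remark \ref{wsp_ogr_T} delivers exactly the claimed uniform bound of $\{(\ten{\varepsilon}^{\bf p}_{k,l})_t\}$ in $L^{p'}(0,T,(H^{s}(\Omega,\mathcal{S}^3))')$.
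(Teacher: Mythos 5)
Your proposal is correct and follows essentially the same route as the paper's own proof in Appendix \ref{sec:Lemma}: there too the pairing with an $H^s$ test function is rewritten via the Galerkin equations as a pairing of $\ten{G}$ with a projected test function, H\"older and the embedding $H^s(\Omega,\mathcal{S}^3)\subset L^p(\Omega,\mathcal{S}^3)$ are applied, and the whole lemma reduces to the uniform-in-$l$ $H^s$-stability of the combined $(\cdot,\cdot)_{\ten{D}}$-orthogonal projection, exactly the bound you isolate as the main obstacle. The sketch you give for that bound is precisely what the paper makes rigorous through the identity $P^l_{L^2}=P^l_{H^s}\circ(\mathrm{Id}-P^k)$: since the $\ten{\zeta}^k_m$ are $(\cdot,\cdot)_{\ten{D}}$-orthogonal to the block $\mathrm{span}\{\ten{\varepsilon}(\vc{w}_n)\}_{n=1}^k$, the projection onto their span kills the $P^k$-component of a general $\ten{\chi}\in H^s$, and on $V_k^s$ it coincides with the $H^s$-orthogonal truncation, so that $\|P^l_{L^2}\ten{\chi}\|_{H^s}\le\|(\mathrm{Id}-P^k)\ten{\chi}\|_{H^s}\le c(k)\|\ten{\chi}\|_{H^s}$ with $c(k)$ independent of $l$, which is the stability estimate your argument needs.
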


Proof of Lemma \ref{wsp_org_epa} is the most tricky one in this part of the paper. We refer the reader to Appendix \ref{sec:Lemma}, where this proof is presented. 

Let us calculate time derivative of equation \eqref{app_system}$_{(1)}$. We obtain
\begin{equation}
\int_{\Omega} (\ten{T}_{k,l} -\alpha\ten{I})_t: \ten{\varepsilon}(\vc{w}_{n}) \dx + 
\int_{\Omega} (\ten{T}_{k,l} -\alpha\ten{I}): \ten{\varepsilon}(\vc{w}_{n})_t \dx 
=0.
\label{eq:11}
\end{equation}
Since $\alpha \ten{I}$ is constant and $\vc{w}_{n}$ does not depend on time, then \eqref{eq:11} is equivalent to
\begin{equation}
\int_{\Omega} (\ten{T}_{k,l})_t: \ten{\varepsilon}(\vc{w}_{n}) \dx =0.
\label{eq:11a}
\end{equation}
Thus
\begin{equation}
\int_{\Omega} \ten{D}(\ten{\varepsilon}(\vc{u}_{k,l}))_t: \ten{\varepsilon}(\vc{w}_{n}) \dx =
\int_{\Omega} \ten{D}(\ten{\varepsilon}^{\bf p}_{k,l})_t:
\ten{\varepsilon}(\vc{w}_{n}) \dx ,
\label{eq:11aa}
\end{equation}
for a.a. $t\in (0,T)$.

\begin{lemat}
For every fixed $k\in\mathbb{N}$ the sequence $\{(\vc{u}_{k,l})_t \}$ is uniformly bounded in $L^{p'}(0,T,W^{1,2}_0(\Omega,\mathbb{R}^3))$ with respect to~$l$.
\label{lm:2.6}
\end{lemat}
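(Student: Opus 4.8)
The plan is to read off the bound directly from the identity \eqref{eq:11aa} by testing it against the time derivative of the displacement itself, exploiting that for a fixed $k$ the function $(\vc{u}_{k,l})_t$ lives in the $k$-dimensional space $\mathrm{span}\{\vc{w}_1,\dots,\vc{w}_k\}$, on which all norms are equivalent with constants depending on $k$ but not on $l$. Since $\ten{\varepsilon}$ commutes with the time derivative, we have $(\ten{\varepsilon}(\vc{u}_{k,l}))_t=\ten{\varepsilon}((\vc{u}_{k,l})_t)$, and because $\alpha_{k,l}^n=\gamma_{k,l}^n$ the expansion $(\vc{u}_{k,l})_t=\sum_{n=1}^k(\alpha_{k,l}^n)_t\vc{w}_n$ is available. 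First I would multiply the $n$-th copy of \eqref{eq:11aa} by $(\alpha_{k,l}^n)_t$ and sum over $n=1,\dots,k$, which gives, for almost every $t\in(0,T)$,
\begin{equation}
\int_{\Omega}\ten{D}\,\ten{\varepsilon}((\vc{u}_{k,l})_t):\ten{\varepsilon}((\vc{u}_{k,l})_t)\dx
=\int_{\Omega}\ten{D}(\ten{\varepsilon}^{\bf p}_{k,l})_t:\ten{\varepsilon}((\vc{u}_{k,l})_t)\dx.
\end{equation}

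On the left-hand side I would use the positive-definiteness of $\ten{D}$ to bound the integral below by $c\,\|\ten{\varepsilon}((\vc{u}_{k,l})_t)\|_{L^2(\Omega)}^2$, and then Korn's inequality (applicable since $(\vc{u}_{k,l})_t\in W_0^{1,2}$) to pass to $c\,\|(\vc{u}_{k,l})_t\|_{W^{1,2}_0}^2$. On the right-hand side the symmetry $d_{i,j,k,l}=d_{k,l,i,j}$ lets me rewrite the integrand as $(\ten{\varepsilon}^{\bf p}_{k,l})_t:\ten{D}\,\ten{\varepsilon}((\vc{u}_{k,l})_t)$ and read it as the duality pairing $\langle(\ten{\varepsilon}^{\bf p}_{k,l})_t,\ten{D}\,\ten{\varepsilon}((\vc{u}_{k,l})_t)\rangle_{(H^{s})',H^{s}}$, which is legitimate because $\ten{D}\,\ten{\varepsilon}((\vc{u}_{k,l})_t)$ belongs to the finite-dimensional subspace $\mathrm{span}\{\ten{\varepsilon}(\vc{w}_1),\dots,\ten{\varepsilon}(\vc{w}_k)\}\subset H^{s}(\Omega,\mathcal{S}^3)$. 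Estimating this pairing by $\|(\ten{\varepsilon}^{\bf p}_{k,l})_t\|_{(H^{s})'}\,\|\ten{D}\,\ten{\varepsilon}((\vc{u}_{k,l})_t)\|_{H^{s}}$ and invoking the equivalence of the $H^{s}$- and $L^2$-norms on this fixed finite-dimensional space, which produces a constant $C(k)$ independent of $l$, yields
\begin{equation}
\|\ten{D}\,\ten{\varepsilon}((\vc{u}_{k,l})_t)\|_{H^{s}}\le C(k)\,\|\ten{\varepsilon}((\vc{u}_{k,l})_t)\|_{L^2(\Omega)}.
\end{equation}

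Combining the two sides and dividing by $\|\ten{\varepsilon}((\vc{u}_{k,l})_t)\|_{L^2(\Omega)}$ (the estimate being trivial when it vanishes) absorbs one power of the $L^2$-norm and gives the pointwise-in-time bound $\|(\vc{u}_{k,l})_t\|_{W^{1,2}_0}\le C(k)\,\|(\ten{\varepsilon}^{\bf p}_{k,l})_t\|_{(H^{s})'}$. Raising to the power $p'$ and integrating over $(0,T)$ then produces $\|(\vc{u}_{k,l})_t\|_{L^{p'}(0,T,W^{1,2}_0)}\le C(k)\,\|(\ten{\varepsilon}^{\bf p}_{k,l})_t\|_{L^{p'}(0,T,(H^{s})')}$, whose right-hand side is bounded uniformly in $l$ by Lemma \ref{wsp_org_epa}. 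I expect the main obstacle to be the correct handling of the right-hand side: one has to move $\ten{D}$ onto the smooth factor through the symmetry of the elasticity tensor and then justify that the pairing against $(\ten{\varepsilon}^{\bf p}_{k,l})_t\in(H^{s})'$ is controlled, for fixed $k$, by the $L^2$-norm of $\ten{\varepsilon}((\vc{u}_{k,l})_t)$ alone. This is exactly where finite-dimensionality in $k$ enters, and it explains why the resulting estimate is uniform in $l$ but not in $k$.
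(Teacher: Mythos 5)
Your proposal is correct and follows essentially the same route as the paper's proof: testing \eqref{eq:11aa} with $(\alpha_{k,l}^n)_t$, using positive definiteness of $\ten{D}$ on the left, reading the right-hand side as the $(H^s)'$--$H^s$ duality, invoking the equivalence of the $H^s$- and $L^2$-norms on the $k$-dimensional (hence $l$-independent) space, and concluding by integrating in time and applying Lemma \ref{wsp_org_epa}. The only cosmetic difference is that you invoke Korn's inequality where the paper cites Poincar\'{e}'s inequality to pass from $\|\ten{\varepsilon}((\vc{u}_{k,l})_t)\|_{L^2(\Omega)}$ to the full $W^{1,2}_0$-norm; both are needed implicitly and your wording is, if anything, the more precise one.
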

\begin{proof}
Let us multiply the equation \eqref{eq:11aa} by $(\alpha^n_{k,l}(t))_t$ and sum over $k\leq n$. Then we obtain
\begin{equation}
\begin{split}
c\|\ten{\varepsilon}(\vc{u}_{k,l}))_t\|_{L^2(\Omega)}^2 & \leq \int_{\Omega} \ten{D}(\ten{\varepsilon}(\vc{u}_{k,l}))_t: (\ten{\varepsilon}(\vc{u}_{k,l}))_t \dx =
\int_{\Omega} \ten{D}(\ten{\varepsilon}^{\bf p}_{k,l})_t:
(\ten{\varepsilon}(\vc{u}_{k,l}))_t \dx 
\\
& \leq d \| (\ten{\varepsilon}^{\bf p}_{k,l})_t \|_{(H^s(\Omega))'} 
\|(\ten{\varepsilon}(\vc{u}_{k,l}))_t \|_{H^s(\Omega)}.
\end{split}
\label{eq:11b}
\end{equation}
Since for each $k$ function $\vc{u}_{k,l}$ is a finite dimensional function then we may estimate $H^s(\Omega)$-norm by  $L^2(\Omega)$ norm and we obtain
\begin{equation}
c\|\ten{\varepsilon}(\vc{u}_{k,l}))_t\|_{L^2(\Omega)}^2  \leq
 d_2 \| (\ten{\varepsilon}^{\bf p}_{k,l})_t \|_{(H^s(\Omega))'} 
\|(\ten{\varepsilon}(\vc{u}_{k,l}))_t \|_{L^2(\Omega)},
\end{equation}
which provides to 
\begin{equation}
c\|\ten{\varepsilon}(\vc{u}_{k,l}))_t\|_{L^2(\Omega)}  \leq
 d_2 \| (\ten{\varepsilon}^{\bf p}_{k,l})_t \|_{(H^s(\Omega))'} .
\end{equation}
Since we consider homogeneous Dirichlet boundary-value problem for displacement, we use Poincar\'{e}'s inequality. Then integrating over time interval $(0,T)$ and using Lemma \ref{wsp_org_epa} we finish the proof. 
\end{proof}

\begin{lemat}
There exists a constant $C$, depending on the domain $\Omega$ and the time interval $(0,T)$, such that for every $k\in\mathbb{N}$
\begin{equation}
\begin{split}
\sup_{0\leq t\leq T}\|\theta_{k,l}(t)\|^2_{L^2(\Omega)} &+
\|\theta_{k,l}\|^2_{L^2(0,T,W^{1,2}(\Omega))} +
\|(\theta_{k,l})_t\|^2_{L^2(0,T,W^{-1,2}(\Omega))}
\\
& \leq C\Big(\|\mathcal{T}_k\Big( ( \tilde{\ten{T}}^d + \ten{T}_{k,l}^d ):\ten{G}(\tilde{\theta} + \theta_{k,l},\tilde{\ten{T}}^d + \ten{T}_{k,l}^d) \Big)\|^2_{L^2(0,T,L^2(\Omega))}
\\
& \qquad \quad 
+
\|\mathcal{T}_k(\theta_0)\|_{L^2(\Omega)}^2
+\|(\ten{\varepsilon}(\vc{u}_{k,l}))_t\|_{L^{p'}(0,T,L^2(\Omega))}^2
\Big).
\label{numer}
\end{split}
\end{equation}
\label{lm:7}
\end{lemat}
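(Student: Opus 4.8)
The plan is to run the classical energy method for the Galerkin heat equation and then read off the time-derivative bound by duality. Concretely, I would take the last (heat) line of \eqref{app_system}, multiply it by $\beta_{k,l}^m(t)$ and sum over $m=1,\dots,l$; since $\theta_{k,l}=\sum_m\beta_{k,l}^m v_m$, this is exactly testing with $\theta_{k,l}$ and gives, for a.a. $t$,
\begin{equation}
\frac12\frac{d}{dt}\|\theta_{k,l}\|^2_{L^2(\Omega)}+\|\nabla\theta_{k,l}\|^2_{L^2(\Omega)}
=-\int_\Omega\alpha\,\div(\vc{u}_{k,l})_t\,\theta_{k,l}\dx
+\int_\Omega \mathcal{T}_k\big((\tilde{\ten{T}}^d+\ten{T}^d_{k,l}):\ten{G}(\tilde{\theta}+\theta_{k,l},\tilde{\ten{T}}^d+\ten{T}^d_{k,l})\big)\,\theta_{k,l}\dx.
\nonumber
\end{equation}
The source term is harmless: Cauchy--Schwarz and Young bound it by $\tfrac12\|\mathcal{T}_k(\cdots)\|^2_{L^2(\Omega)}+\tfrac12\|\theta_{k,l}\|^2_{L^2(\Omega)}$, and after integration in time the first piece is precisely the first quantity on the right of \eqref{numer}.

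The delicate term, and the main obstacle, is the thermo-mechanical coupling $\int_\Omega\alpha\,\div(\vc{u}_{k,l})_t\,\theta_{k,l}\dx$. Here a plain $L^2$-Young estimate is not allowed, because Lemma \ref{lm:2.6} controls $(\vc{u}_{k,l})_t$ only in $L^{p'}(0,T,W^{1,2}_0)$, which for $p>2$ is strictly weaker than an $L^2$-in-time bound. After integrating over $(0,t)$ and using $|\div\vc{v}|\le\sqrt3\,|\ten{\varepsilon}(\vc{v})|$, I would therefore apply H\"older in time with the conjugate pair $(p',p)$,
\begin{equation}
\Big|\int_0^t\!\!\int_\Omega\alpha\,\div(\vc{u}_{k,l})_t\,\theta_{k,l}\dx\ds\Big|
\le\alpha\sqrt3\,\|(\ten{\varepsilon}(\vc{u}_{k,l}))_t\|_{L^{p'}(0,T,L^2(\Omega))}
\Big(\int_0^t\|\theta_{k,l}(s)\|^p_{L^2(\Omega)}\ds\Big)^{1/p},
\nonumber
\end{equation}
then bound $(\int_0^t\|\theta_{k,l}\|^p)^{1/p}\le T^{1/p}\sup_{s\le t}\|\theta_{k,l}(s)\|_{L^2(\Omega)}$ and use Young to split off $\tfrac14\sup_{s\le t}\|\theta_{k,l}(s)\|^2_{L^2(\Omega)}+C\,\|(\ten{\varepsilon}(\vc{u}_{k,l}))_t\|^2_{L^{p'}(0,T,L^2(\Omega))}$. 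The supremum piece is absorbed into the left-hand side, and the second summand reproduces exactly the displacement term of \eqref{numer}.

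With these two estimates the integrated energy identity yields, writing $A(\tau):=\sup_{t\le\tau}\|\theta_{k,l}(t)\|^2_{L^2(\Omega)}$, an inequality of the form $A(\tau)\le C_0+C_1\int_0^\tau A(s)\ds$, where $C_0$ collects $\|\mathcal{T}_k(\theta_0)\|^2_{L^2(\Omega)}$ (the initial datum satisfies $\theta_{k,l}(0)=P_l\mathcal{T}_k(\theta_0)$, so $\|\theta_{k,l}(0)\|_{L^2}\le\|\mathcal{T}_k(\theta_0)\|_{L^2}$), the truncated source, and the displacement contribution. Gr\"onwall's lemma gives the supremum bound, and substituting it back controls $\|\nabla\theta_{k,l}\|^2_{L^2(0,T,L^2(\Omega))}$; together these produce the first two terms of \eqref{numer}.

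For the third term I argue by duality. Fix $\phi\in W^{1,2}(\Omega)$ and let $P_l\phi$ be its projection onto $\mathrm{span}\{v_1,\dots,v_l\}$, which is bounded in $W^{1,2}(\Omega)$ because the $v_m$ are Laplace--Neumann eigenfunctions. As $(\theta_{k,l})_t$ lies in that span, $\langle(\theta_{k,l})_t,\phi\rangle=\int_\Omega(\theta_{k,l})_t\,P_l\phi\dx$, and the heat equation rewrites the latter via $\nabla\theta_{k,l}$, $\div(\vc{u}_{k,l})_t$ and the truncated source; estimating each factor against $\|P_l\phi\|_{W^{1,2}}\le\|\phi\|_{W^{1,2}}$ gives pointwise in time $\|(\theta_{k,l})_t\|_{W^{-1,2}(\Omega)}\le\|\nabla\theta_{k,l}\|_{L^2}+C\|(\ten{\varepsilon}(\vc{u}_{k,l}))_t\|_{L^2}+\|\mathcal{T}_k(\cdots)\|_{L^2}$. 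Squaring, integrating in time, and inserting the two bounds already obtained together with Lemma \ref{lm:2.6} yields the last term of \eqref{numer}. The only point requiring genuine care throughout is, as noted, the low time-integrability of $(\vc{u}_{k,l})_t$, which dictates the H\"older-in-time step; the remainder is a routine energy computation in the spirit of \cite{1}.
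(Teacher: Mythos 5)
Your treatment of the first two terms of \eqref{numer} is correct and is essentially the paper's own argument: the paper also regards everything except the thermo-mechanical coupling as standard parabolic theory, and it estimates $\alpha\int_0^T\int_\Omega\theta_{k,l}\,\div(\vc{u}_{k,l})_t\dxdt$ exactly as you do --- Cauchy--Schwarz in space, H\"older in time so that $\div(\vc{u}_{k,l})_t$ is measured in $L^{p'}(0,T,L^2(\Omega))$ while $\theta_{k,l}$ is measured in $L^\infty(0,T,L^2(\Omega))$, then Young's inequality and absorption of the supremum into the left-hand side. Your additional details (the projection bound $\|\theta_{k,l}(0)\|_{L^2(\Omega)}\le\|\mathcal{T}_k(\theta_0)\|_{L^2(\Omega)}$ and the Gr\"onwall step) are fine.

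The gap is in your last step, the bound on $(\theta_{k,l})_t$. Your pointwise-in-time duality estimate
\begin{equation}
\|(\theta_{k,l})_t\|_{W^{-1,2}(\Omega)}\le\|\nabla\theta_{k,l}\|_{L^2(\Omega)}+C\|(\ten{\varepsilon}(\vc{u}_{k,l}))_t\|_{L^2(\Omega)}+C\|\mathcal{T}_k(\cdots)\|_{L^2(\Omega)}
\nonumber
\end{equation}
is correct, but squaring it and integrating over $(0,T)$ produces $\|(\ten{\varepsilon}(\vc{u}_{k,l}))_t\|^2_{L^2(0,T,L^2(\Omega))}$, an $L^2$-in-time quantity. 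For $p>2$ one has $p'<2$, and on a bounded time interval the inclusion runs the wrong way ($L^2(0,T)\subset L^{p'}(0,T)$), so this quantity is \emph{not} dominated by the term $\|(\ten{\varepsilon}(\vc{u}_{k,l}))_t\|^2_{L^{p'}(0,T,L^2(\Omega))}$ appearing on the right of \eqref{numer}; nor does Lemma \ref{lm:2.6} give any $L^2$-in-time control --- the exponent $p'$ there is forced by the $(p-1)$-growth of $\ten{G}$ in Assumption \ref{ass_G} and cannot be improved. Hence the final sentence of your proof does not follow. To be fair, this mismatch of exponents sits in the statement of the lemma itself, and the paper's own proof never addresses the $(\theta_{k,l})_t$ term at all. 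What your duality computation does prove --- apply H\"older in time to each of the three terms instead of squaring --- is
\begin{equation}
\|(\theta_{k,l})_t\|^2_{L^{p'}(0,T,W^{-1,2}(\Omega))}\le C\Big(\|\nabla\theta_{k,l}\|^2_{L^{2}(0,T,L^2(\Omega))}+\|(\ten{\varepsilon}(\vc{u}_{k,l}))_t\|^2_{L^{p'}(0,T,L^2(\Omega))}+\|\mathcal{T}_k(\cdots)\|^2_{L^{2}(0,T,L^2(\Omega))}\Big),
\nonumber
\end{equation}
with $C=C(\Omega,T,\alpha)$. This weaker ($L^{p'}$-in-time) bound is all that is actually needed afterwards: together with the $L^2(0,T,W^{1,2}(\Omega))$ bound it suffices for the Aubin--Lions compactness that gives $\theta_{k,l}\to\theta_k$ a.e.\ as $l\to\infty$, which is the only place the time-derivative estimate is used. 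Only in the borderline case $p=2$, where $p'=2$, is your proof complete as written.
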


\begin{proof}
The proof follows from  the standard tools for parabolic equations, see e.g. Evans \cite{Evans}. The only problem which appears is that one has to estimate term $\int_{\Omega} \alpha \ten{I} :(\ten{\varepsilon}(\vc{u}_{k,l}))_t \theta_{k,l}\dx$, which is not trivial. 
\begin{equation}
\begin{split}
\alpha \int_0^T\int_\Omega \theta_{k,l}\div (\vc{u}_{k,l})_t \dx\dt 
& \leq \alpha \int_0^T\|\theta_{k,l}\|_{L^2(\Omega)} \|\div (\vc{u}_{k,l})_t\|_{L^2(\Omega)}\dt
\\
& \leq \alpha \|\theta_{k,l}\|_{L^\infty(0,T,L^2(\Omega))} \|\div ( \vc{u}_{k,l})_t\|_{L^1(0,T,L^2(\Omega))}
\\
& \leq \epsilon \|\theta_{k,l}\|_{L^\infty(0,T,L^2(\Omega))}^2 + C(\epsilon) \alpha^2 \|\div (\vc{u}_{k,l})_t\|_{L^{p'}(0,T,L^2(\Omega))}^2
\\
& \leq \epsilon \|\theta_{k,l}\|_{L^\infty(0,T,L^2(\Omega))}^2 + C(\epsilon) \alpha^2 \|(\ten{\varepsilon}(\vc{u}_{k,l}))_t\|_{L^{p'}(0,T,L^2(\Omega))}^2.
\end{split}
\end{equation}
Putting the first term from right-hand side of above mentioned inequality into left-hand side we complete the proof.
\end{proof}

\begin{uwaga}
The uniform boundedness of solutions 
implies the global existence of approximate solutions, i.e. existence of solutions $\{\alpha_{k,l}^n(t),\beta_{k,l}^m(t),\gamma_{k,l}^n(t),\delta_{k,l}^m(t)\}$ on the whole time interval $[0,T]$ for each $n=1,...,k$ and $m=1,...,l$.
\end{uwaga}

\begin{uwaga}
Testing the heat equation by $1$ we obtain that $\sup_{t\in [0,T]}\int_{\Omega}\theta_{k,l}(t)\dx $ is uniformly bounded with respect to both parameters. Since we consider quasi-static problem, only thermal and potential energy of material are taken into account in total energy. Using Lemma \ref{pom_2} we obtain that total physical energy is finite. 
\end{uwaga}

Now, let us multiply equations from system \eqref{app_system} by smooth time-dependent functions and let us integrate they over $[0,T]$. Then 
\begin{equation}\label{58}
\begin{split}
\int_0^T\int_{\Omega}(\ten{T}_{k,l} - \alpha\ten{I}):\nabla\vc{w}_n  \varphi_1(t) \dxdt 
&= 0,
\\
\int_0^T\int_{\Omega}(\ten{\varepsilon}^{\bf p}_{k,l})_t : \ten{D}\ten{\varepsilon}(\vc{w}_n) \varphi_2(t)\dxdt 
& = 
\int_0^T\int_{\Omega}\ten{G}(\tilde{\theta} + \theta_{k,l},   \tilde{\ten{T}}^d + \ten{T}^d_{k,l}  ) : \ten{D}\ten{\varepsilon}(\vc{w}_n) 
\varphi_2(t)\dxdt, 
\end{split}
\end{equation}
for $n=1,...,k$ and
\begin{equation}\label{60}
\begin{split}
\int_0^T& \int_{\Omega}(\ten{\varepsilon}^{\bf p}_{k,l})_t : \ten{D}\ten{\zeta}^k_m \varphi_3(t)\dxdt = 
\int_0^T\int_{\Omega}\ten{G}(\tilde{\theta} + \theta_{k,l} , \tilde{\ten{T}}^d +\ten{T}^d_{k,l} ) : \ten{D}\ten{\zeta}^k_m \varphi_3(t)\dxdt, 
\\
-\int_0^T&\int_{\Omega} \theta_{k,l}\varphi_4'(t) v_m \dxdt -
\int_{\Omega} \theta_0(x)\varphi_4(0) v_m \dx   + 
\int_0^T\int_{\Omega}  \nabla\theta_{k,l}\cdot \nabla v_m \varphi_4(t) \dxdt 
\\ & 
+ \int_0^T\int_{\Omega} \alpha\mbox{div}(\vc{u}_{k,l})_t v_m \varphi_4(t)\dxdt = 
\int_0^T\int_{\Omega} \mathcal{T}_k\left((\tilde{\ten{T}}^d + \ten{T}^d_{k,l}):\ten{G}(\tilde{\theta} + \theta_{k,l},\tilde{\ten{T}}^d + \ten{T}^d_{k,l})\right)\varphi_4(t)v_m \dxdt,
\end{split}
\end{equation}
holds for every test functions ${\varphi}_1, {\varphi}_2, {\varphi}_3\in C^{\infty}([0,T])$ and $\varphi_4\in C^{\infty}_c([-\infty,T))$. 

From the uniform boundedness of approximate solutions with respect to $l$ we obtain that there exist at least subsequences but still denoted by the index $l$ that the following convergences hold
\begin{equation}
\begin{array}{cl}
\ten{T}_{k,l}\rightharpoonup \ten{T}_k  &  \mbox{weakly in }   L^2(0,T,L^2(\Omega,\mathcal{S}^3)),\\
\ten{T}^d_{k,l}\rightharpoonup \ten{T}^d_k  &  \mbox{weakly in }   L^p(0,T,L^p(\Omega,\mathcal{S}^3_d)),\\
\ten{G}(\tilde{\theta} + \theta_{k,l},\tilde{\ten{T}}^d + \ten{T}_{k,l}^d)\rightharpoonup \ten{\chi}_k  & \mbox{weakly in } L^{p'}(0,T,L^{p'}(\Omega,\mathcal{S}^3_d)), \\
\theta_{k,l}\rightharpoonup \theta_k  &  \mbox{weakly in }   L^2(0,T,W^{1,2}(\Omega)),\\
\theta_{k,l}\rightarrow \theta_k  &  \mbox{a.e. in } \Omega \times (0,T),\\
(\ten{\varepsilon}^{\bf p}_{k,l})_t\rightharpoonup(\ten{\varepsilon}^{\bf p}_{k})_t&  \mbox{weakly in }   
L^{p'}(0,T,(H^s(\Omega,\mathcal{S}^3))'),
\\
(\vc{u}_{k,l})_t \rightharpoonup (\vc{u}_{k})_t & \mbox{weakly in } L^{p'}(0,T, W^{1,2}_0(\Omega,\mathbb{R}^3))
\end{array}
\end{equation}
Passing to the limit in \eqref{58} and \eqref{60}$_{(1)}$ yields
\begin{equation}\label{limit1}
\begin{split}
\int_0^T\int_{\Omega}(\ten{T}_{k}-\alpha\ten{I}):\nabla\vc{w}_n  \varphi_1(t) \dxdt 
&= 0, \quad n=1,\dots, k 
\end{split}
\end{equation}
\begin{equation}\label{limit2}\begin{split}
\int_0^T\int_{\Omega}(\ten{\varepsilon}^{\bf p}_{k})_t : \ten{D}\ten{\varepsilon}(\vc{w}_n) \varphi_2(t)\dxdt = 
\int_0^T\int_{\Omega}\ten{\chi}_{k}: \ten{D}\ten{\varepsilon}(\vc{w}_n) 
\varphi_2(t)\dxdt, 
\quad n=1,...,k ,
\\
\int_0^T\int_{\Omega}(\ten{\varepsilon}^{\bf p}_{k})_t : \ten{D}\ten{\zeta}^k_m \varphi_3(t)\dxdt = 
\int_0^T\int_{\Omega}\ten{\chi}_{k}  : \ten{D}\ten{\zeta}^k_m \varphi_3(t)\dxdt, 
\quad  m\in\mathbb{N},
\end{split}\end{equation}
holds for every test functions ${\varphi}_1, {\varphi}_2, {\varphi}_3\in C^{\infty}([0,T])$. By construction of bases set $(\{\ten{\varepsilon}(\vc{w}_n)\}_{n=1}^k,\{\ten{\zeta}_m^k\}_{m=1}^{\infty} )$ is a dense set in $L^{p}(\Omega,\mathcal{S}^3)$. Thus
\begin{equation}\label{65}
\int_0^T\int_{\Omega}(\ten{\varepsilon}^{\bf p}_{k})_t : \ten{\varphi}\dxdt = 
\int_0^T\int_{\Omega}\ten{\chi}_{k}  :  \ten{\varphi}\dxdt,
\end{equation}
holds for all $ \ten{\varphi}\in C^\infty([0,T],L^{p}(\Omega,\mathcal{S}^3))$ and then also for all $ \ten{\varphi}\in L^{p}(0,T;L^{p}(\Omega,\mathcal{S}^3))$.

The last part of this section is devoted to identification the weak limit of the nonlinear term $\ten{\chi}_k$ and showing the convergence of $\ten{G}(\tilde{\theta} + \theta_{k,l},\tilde{\ten{T}}^d + \ten{T}_{k,l}^d):(\tilde{\ten{T}}^d+\ten{T}^d_{k,l})$. At this moment the limit of $\{\ten{G}(\tilde{\theta} + \theta_{k,l},\tilde{\ten{T}}^d + \ten{T}_{k,l}^d):(\tilde{\ten{T}}^d+\ten{T}^d_{k,l})\}$ is not defined, because $\{\ten{G}(\tilde{\theta} + \theta_{k,l},\tilde{\ten{T}}^d + \ten{T}_{k,l}^d)\}$ and $\{\tilde{\ten{T}}^d+\ten{T}^d_{k,l}\}$ converges only weakly.

\begin{lemat}
The following inequality holds for solutions of approximate system
\begin{equation}
\limsup_{l\rightarrow\infty}\int_{0}^{t}\int_{\Omega}\ten{G}(\tilde{\theta} + \theta_{k,l},\tilde{\ten{T}}^d + \ten{T}^d_{k,l}):\ten{T}^d_{k,l} \dxdt \leq
\int_{0}^{t}\int_{\Omega}\ten{\chi}_k:\ten{T}^d_k \dxdt .
\label{teza-8}
\end{equation}
\label{lm:8}
\end{lemat}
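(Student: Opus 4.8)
The plan is to prove \eqref{teza-8} by the energy method rather than by invoking monotonicity of $\ten{G}$: the left-hand integrand is, up to sign, the dissipation rate of the potential energy $\mathcal{E}$ from Definition \ref{energia}. Accordingly I would first derive an \emph{exact} energy identity at the Galerkin level, then take $\limsup_{l\to\infty}$ using weak lower semicontinuity of $\mathcal{E}$, and finally recognise the resulting bound as the energy balance satisfied by the $l\to\infty$ limit.

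To obtain the approximate identity I would compute $\tfrac{d}{dt}\mathcal{E}(\ten{\varepsilon}(\vc{u}_{k,l}),\ten{\varepsilon}^{\bf p}_{k,l})$, which by symmetry of $\ten{D}$ equals $\int_\Omega \ten{T}_{k,l}:(\ten{\varepsilon}((\vc{u}_{k,l})_t)-(\ten{\varepsilon}^{\bf p}_{k,l})_t)\dx$. The displacement part vanishes: multiplying the $n$-th equation of \eqref{app_system}$_{(1)}$ by $(\alpha_{k,l}^n)_t$ and summing over $n\le k$ gives $\int_\Omega(\ten{T}_{k,l}-\alpha\ten{I}):\ten{\varepsilon}((\vc{u}_{k,l})_t)\dx=0$, while $\int_\Omega\alpha\ten{I}:\ten{\varepsilon}((\vc{u}_{k,l})_t)\dx=\alpha\int_\Omega\div((\vc{u}_{k,l})_t)\dx=0$ by the homogeneous Dirichlet condition, so $\int_\Omega\ten{T}_{k,l}:\ten{\varepsilon}((\vc{u}_{k,l})_t)\dx=0$. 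The key observation is that $\ten{T}_{k,l}$ is admissible as a test field in the flow rule: thanks to the identity $\alpha_{k,l}^n=\gamma_{k,l}^n$ from Section \ref{sec:cons1} one has $\ten{\varepsilon}(\vc{u}_{k,l})-\ten{\varepsilon}^{\bf p}_{k,l}=-\sum_m\delta_{k,l}^m\ten{\zeta}_m^k$, which lies in $\mathrm{span}\{\ten{\zeta}_m^k\}_{m\le l}$, so choosing this combination in \eqref{app_system}$_{(3),(4)}$ and using that the $\mathcal{S}^3_d$-valued field $\ten{G}$ contracts only with deviatoric parts yields $\int_\Omega(\ten{\varepsilon}^{\bf p}_{k,l})_t:\ten{T}_{k,l}\dx=\int_\Omega\ten{G}(\tilde\theta+\theta_{k,l},\tilde{\ten{T}}^d+\ten{T}^d_{k,l}):\ten{T}^d_{k,l}\dx$. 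Integrating in time I arrive at
\[
\int_0^t\int_\Omega \ten{G}(\tilde\theta+\theta_{k,l},\tilde{\ten{T}}^d+\ten{T}^d_{k,l}):\ten{T}^d_{k,l}\dxdt=\mathcal{E}(\ten{\varepsilon}(\vc{u}_{k,l}),\ten{\varepsilon}^{\bf p}_{k,l})(0)-\mathcal{E}(\ten{\varepsilon}(\vc{u}_{k,l}),\ten{\varepsilon}^{\bf p}_{k,l})(t).
\]

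Next I would pass to the $\limsup$. The initial energies converge because the Galerkin projections of the fixed data $\ten{\varepsilon}^{\bf p}_0$ converge strongly as $l\to\infty$. For the terminal term I would use that $\mathcal{E}=\tfrac12\int_\Omega\ten{D}^{-1}\ten{T}_{k,l}:\ten{T}_{k,l}\dx$ is a nonnegative convex quadratic form in $\ten{T}_{k,l}$, hence weakly lower semicontinuous, so $\liminf_l\mathcal{E}(t)\ge\mathcal{E}_k(t)$ and $\limsup_l(-\mathcal{E}(t))\le-\mathcal{E}_k(t)$, where $\mathcal{E}_k$ denotes the energy of the limit fields. This gives $\limsup_l\int_0^t\int_\Omega\ten{G}(\cdots):\ten{T}^d_{k,l}\dxdt\le\mathcal{E}_k(0)-\mathcal{E}_k(t)$. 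To identify the right-hand side with $\int_0^t\int_\Omega\ten{\chi}_k:\ten{T}^d_k\dxdt$ I would repeat the energy computation at the limit: the limit momentum equation \eqref{limit1} again annihilates the displacement contribution, \eqref{65} identifies $(\ten{\varepsilon}^{\bf p}_k)_t$ with $\ten{\chi}_k$, and testing this relation with $\ten{T}^d_k$ (admissible since $\ten{T}^d_k\in L^p$ and $\ten{\chi}_k$ is traceless) produces the exact balance $\mathcal{E}_k(0)-\mathcal{E}_k(t)=\int_0^t\int_\Omega\ten{\chi}_k:\ten{T}^d_k\dxdt$, which is precisely \eqref{teza-8}.

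The step I expect to be the genuine obstacle is the weak lower semicontinuity applied at the \emph{fixed} endpoint $t$, which requires the pointwise-in-time convergence $\ten{T}_{k,l}(t)\rightharpoonup\ten{T}_k(t)$ in $L^2(\Omega)$ rather than merely in the space-time sense. I would secure this from the bound on $\ten{T}_{k,l}$ in $L^\infty(0,T,L^2(\Omega))$ of Lemma \ref{pom_2} together with the control of $\partial_t\ten{T}_{k,l}$ in $L^{p'}(0,T,(H^s(\Omega))')$ coming from Lemmas \ref{wsp_org_epa} and \ref{lm:2.6}; a standard Arzel\`a--Ascoli argument for vector-valued functions then yields weak continuity in $L^2(\Omega)$ and hence the required convergence for every $t$ (alternatively one proves \eqref{teza-8} for a.e.\ $t$, which already suffices for the later Minty--Browder step). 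It is worth stressing that Assumption \ref{ass_G}(a) plays no role here; monotonicity enters only afterwards, when \eqref{teza-8} is combined with the Minty trick to identify $\ten{\chi}_k$ with $\ten{G}(\tilde\theta+\theta_k,\tilde{\ten{T}}^d+\ten{T}^d_k)$.
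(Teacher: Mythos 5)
Your proposal is correct in substance and rests on the same core idea as the paper: Lemma \ref{lm:8} is proved by an energy balance, not by monotonicity (you are right that Assumption \ref{ass_G}(a) plays no role here), and your Galerkin-level identity coincides with the paper's \eqref{ene}, derived by exactly the same testing ($(\alpha^n_{k,l})_t$ in the momentum equation, the coefficients $\delta^m_{k,l}$ in the flow rule, the $\alpha\ten{I}$ term vanishing by the Dirichlet condition). Where you genuinely diverge is in the treatment of the endpoint in time. The paper never claims $\ten{T}_{k,l}(t)\rightharpoonup\ten{T}_k(t)$ at a \emph{fixed} $t$; instead it multiplies \eqref{ene} by the ramp function $\psi_{\mu,t_2}$ of \eqref{psi-mu}, applies weak lower semicontinuity only to the time-averaged energy $\frac{1}{\mu}\int_{t_2}^{t_2+\mu}\mathcal{E}\dt$ (see \eqref{mu4}), recovers $\psi_{\mu,t_2}$ on the limit side via Fubini, and finally sends $\mu\to0$ through a chain of inequalities. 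You instead propose to prove pointwise-in-time weak $L^2$-convergence via an Arzel\`a--Ascoli argument, using the $L^\infty(0,T,L^2)$ bound of Lemma \ref{pom_2} together with the bounds on $(\ten{\varepsilon}^{\bf p}_{k,l})_t$ and $(\ten{\varepsilon}(\vc{u}_{k,l}))_t$ from Lemmas \ref{wsp_org_epa} and \ref{lm:2.6}; since $L^2(\Omega,\mathcal{S}^3)$ embeds compactly into $(H^s(\Omega,\mathcal{S}^3))'$, this does yield $\ten{T}_{k}\in C_w([0,T],L^2)$ and convergence at every $t$, so your route is viable and in fact shortcuts the whole $\psi_{\mu,t_2}$/Fubini/$\mu\to0$ machinery, at the price of one extra compactness lemma. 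The one step you treat too briskly is the energy identity for the limit fields: ``testing \eqref{65} with $\ten{T}^d_k$ and \eqref{limit1} with $(\vc{u}_k)_t$'' gives $\int_0^t\int_\Omega\ten{T}_k:(\ten{\varepsilon}(\vc{u}_k)-\ten{\varepsilon}^{\bf p}_k)_t\dxdt=-\int_0^t\int_\Omega\ten{\chi}_k:\ten{T}^d_k\dxdt$, but identifying the left-hand side with $\mathcal{E}_k(t)-\mathcal{E}_k(0)$ is a chain-rule statement that is not automatic at this regularity ($(\ten{\varepsilon}(\vc{u}_k)-\ten{\varepsilon}^{\bf p}_k)_t$ lives only in $L^{p'}$-type spaces); the paper devotes \eqref{app_system_n}--\eqref{gran} to exactly this point, mollifying in time with $\eta_\epsilon$ and using $\ten{\varepsilon}(\vc{u}_k),\ten{\varepsilon}^{\bf p}_k\in C_w([0,T],L^2)$ to pass $t_1\to0$. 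Your argument needs this same mollification step spelled out; with it included, your proof is complete and somewhat more direct than the paper's.
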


\begin{proof}
For each $\mu>0, t_2\le T-\mu, t\ge0, $ let $\psi_\mu:\mathbb{R}_+\to\mathbb{R}_+$ be defined as follows
\begin{equation}\label{psi-mu}
\psi_{\mu,t_2}(t)=\left\{
\begin{array}{lcl}
1&{\rm for}&t\in[0,t_2),\\
-\frac{1}{\mu}t+\frac{1}{\mu}t_2+1&{\rm for} & t\in[t_2, t_2+\mu),\\
0&{\rm for} & t\ge t_2+\mu.
\end{array}\right.
\end{equation}
The potential energy is an absolutely continuous function and calculating the time derivative of $\mathcal{E}(t)$ we get for a.a. $t\in[0,T]$
\begin{equation}
\begin{split}
\frac{d}{dt} \mathcal{E}(\ten{\varepsilon}(\vc{u}_{k,l}) , \ten{\varepsilon}^{\bf p}_{k,l}) 
& = 
\int_{\Omega}\ten{D}(\ten{\varepsilon}(\vc{u}_{k,l}) - \ten{\varepsilon}^{\bf p}_{k,l}):(\ten{\varepsilon}(\vc{u}_{k,l}))_t 
\dx 
\\
& \quad
-
\int_{\Omega}\ten{D}(\ten{\varepsilon}(\vc{u}_{k,l}) - \ten{\varepsilon}^{\bf p}_{k,l}): (\ten{\varepsilon}^{\bf p}_{k,l})_t \dx.
\end{split}
\label{pochodna}\end{equation}
In the first step we  multiply \eqref{app_system}$_{(1)}$ by $\{(\alpha_{k,l}^n)_t\}$ for each $n\leq k$.
Summing over $n=1,...,k$ we obtain
\begin{equation}
 \int_{\Omega}\left(\ten{D}(\ten{\varepsilon}(\vc{u}_{k,l}) - \ten{\varepsilon}^{\bf p}_{k,l}) -\alpha \ten{I}\right): (\ten{\varepsilon}(\vc{u}_{k,l}))_t \dx =
0.
\label{pierwsze_r}
\end{equation}
Hence
\begin{equation}
\int_{\Omega}\ten{D}(\ten{\varepsilon}(\vc{u}_{k,l}) - \ten{\varepsilon}^{\bf p}_{k,l}) : (\ten{\varepsilon}(\vc{u}_{k,l}))_t \dx -
\int_{\Omega} \alpha \ten{I}: (\ten{\varepsilon}(\vc{u}_{k,l}))_t \dx=0.
\label{pierwsze_r2aaaa}
\end{equation}
Integrating by parts second integral we observe that it is equal to zero. In the second step we  multiply \eqref{app_system}$_{(4)}$ by $\delta^m_{k,l}$ and summing over $m=1,...,l$, we obtain the identity, which is equivalent to
\begin{equation}
\int_{\Omega}(\ten{\varepsilon}^{\bf p}_{k,l})_t:\ten{T}_{k,l} \dx=
\int_{\Omega}\ten{G}(\tilde{\theta} + \theta_{k,l},\tilde{\ten{T}}^d + \ten{T}^d_{k,l}):\ten{T}_{k,l} \dx.
\label{drugie_r}
\end{equation}
Thus
\begin{equation}\label{ene}
\begin{split}
\frac{d}{dt} \mathcal{E}(\ten{\varepsilon}(\vc{u}_{k,l}) , \ten{\varepsilon}^{\bf p}_{k,l}) 
 = &
 -
\int_{\Omega}\ten{G}(\tilde{\theta} + \theta_{k,l},\tilde{\ten{T}}^d + \ten{T}^d_{k,l}):\ten{T}^d_{k,l}\dx.
\end{split}
\end{equation}
Multiplying \eqref{ene} by $\psi_{\mu,t_2}(t)$ and integrate over $(0,T)$
\begin{equation}\label{mu2}
\begin{split}
\int_{0}^{T}
\frac{d}{d\tau} \mathcal{E}(\ten{\varepsilon}(\vc{u}_{k,l}) , \ten{\varepsilon}^{\bf p}_{k,l}) \,\psi_{\mu,t_2}\dt
 = 
-
\int_0^T\int_{\Omega}\ten{G}(\tilde{\theta} + \theta_{k,l},\tilde{\ten{T}}^d + \ten{T}^d_{k,l}):\ten{T}^d_{k,l} \,\psi_{\mu,t_2}\dxdt.
\end{split}
\end{equation}
Let us now integrate by parts the left hand side of \eqref{mu2}
\begin{equation}\label{mu3}
\begin{split}
\int_{0}^{T}
\frac{d}{d\tau} \mathcal{E}(\ten{\varepsilon}(\vc{u}_{k,l}) , \ten{\varepsilon}^{\bf p}_{k,l}) \,\psi_{\mu,t_2}\dt
=\frac{1}{\mu}\int_{t_2}^{t_2+\mu}\mathcal{E}(\ten{\varepsilon}(\vc{u}_{k,l}(t)) , \ten{\varepsilon}^{\bf p}_{k,l}(t)) \dt-
\mathcal{E}(\ten{\varepsilon}(\vc{u}_{k,l}(0)) , \ten{\varepsilon}^{\bf p}_{k,l}(0)).
\end{split}\end{equation}
Passing to the limit with $l\to\infty$ we obtain
\begin{equation}\label{mu4}
\begin{split}
\liminf\limits_{l\to\infty}\int_{0}^{T}
\frac{d}{d\tau}& \mathcal{E}(\ten{\varepsilon}(\vc{u}_{k,l}) , \ten{\varepsilon}^{\bf p}_{k,l}) \,\psi_{\mu,t_2}\dt\\
&=\liminf\limits_{l\to\infty}\frac{1}{\mu}\int_{t_2}^{t_2+\mu}\mathcal{E}(\ten{\varepsilon}(\vc{u}_{k,l}) , \ten{\varepsilon}^{\bf p}_{k,l}) \dt-
\lim\limits_{l\to\infty}\mathcal{E}(\ten{\varepsilon}(\vc{u}_{k,l}(0)) , \ten{\varepsilon}^{\bf p}_{k,l}(0))\\
&\ge \frac{1}{\mu}\int_{t_2}^{t_2+\mu}\mathcal{E}(\ten{\varepsilon}(\vc{u}_{k}(t)) , \ten{\varepsilon}^{\bf p}_{k}(t)) \dt-
\mathcal{E}(\ten{\varepsilon}(\vc{u}_{k}(0)) , \ten{\varepsilon}^{\bf p}_{k}(0)).
\end{split}
\end{equation}
Note that the last inequality holds due to the weak lower semicontinuity in 
$L^2(0,T,L^2(\Omega;\mathcal{S}^3))$. 
Let us take $t_1,\tau\in (0,T)$ and $\epsilon$ such that $\epsilon < \min(t_1,T-\tau)$. Then we choose in \eqref{limit1} the test functions  $\varphi_1(t)=((\alpha^n_k)_t*\eta_{\epsilon}\mathbf{1}_{(t_1, \tau)})*\eta_{\epsilon}$, and  in \eqref{65} $\ten{\varphi}=(\ten{T}_k^d*\eta_{\epsilon}\mathbf{1}_{(t_1, \tau)})*\eta_{\epsilon}$, where $\eta_\epsilon$ is a standard mollifier and we mollify with respect to time. Thus we obtain
\begin{equation}
\begin{split}
\int_0^T\int_{\Omega}  \ten{T}_{k} : \ten{\varepsilon}(((\alpha^n_k)_t*\eta_{\epsilon}\mathbf{1}_{(t_1,\tau)})*\eta_{\epsilon}\vc{w}_n) \dx &= 0,
\\
\int_0^T\int_{\Omega}(\ten{\varepsilon}^{\bf p}_{k})_t : (\ten{T}_k^d*\eta_{\epsilon}\mathbf{1}_{(t_1,\tau)})*\eta_{\epsilon} \dx 
= 
\int_0^T\int_{\Omega}\ten{\chi}_{k} : &(\ten{T}_k^d*\eta_{\epsilon}\mathbf{1}_{(t_1,\tau)})*\eta_{\epsilon}\dx ,
\end{split}
\label{app_system_n}
\end{equation}
for  $n=1,...,k $. Using the properties of convolution and summing \eqref{app_system_n}$_{(1)}$ over $n=1,...,k$ we obtain
\begin{equation}
\begin{split}
\int_{t_1}^{\tau} \int_{\Omega}\ten{D}\left(\ten{\varepsilon}(\vc{u}_{k}) - \ten{\varepsilon}^{\bf p}_{k}\right)*\eta_{\epsilon}: (\ten{\varepsilon}(\vc{u}_{k})*\eta_{\epsilon})_t \dxdt &=
0,
\\
\int_{t_1}^{\tau}\int_{\Omega}(\ten{\varepsilon}^{\bf p}_{k}*\eta_{\epsilon})_t:\ten{T}_{k}^d*\eta_{\epsilon} \dxdt =
\int_{t_1}^{\tau}\int_{\Omega}\ten{\chi}_{k}*\eta_{\epsilon}:& \ten{T}_{k}^d*\eta_{\epsilon} \dxdt .
\end{split}
\label{drugie_r2}
\end{equation}
Properties of traceless matrices, i.e. $\ten{A}\in\mathcal{S}^3_d$ and $\ten{B}\in\mathcal{S}^3$ then $\ten{A}:\ten{B}^d=\ten{A}:\ten{B}$ holds, allow us to replace deviatoric part of matrix $\ten{T}_k^d$ by $\ten{T}_k$ in \eqref{drugie_r2}$_{(2)}$ and it is still well defined. Subtracting \eqref{drugie_r2}$_{(2)}$ from \eqref{drugie_r2}$_{(1)}$ and passing with $\epsilon\to 0$ we obtain the equality
\begin{equation}
\frac{1}{2}\int_{\Omega}\ten{D}(\ten{\varepsilon}(\vc{u}_k) - \ten{\varepsilon}^{\bf p}_k):(\ten{\varepsilon}(\vc{u}_k) - \ten{\varepsilon}^{\bf p}_k) \dx \Big|_{t_1}^{\tau}
 = - 
\int_{t_1}^{t_2}\int_{\Omega}\ten{\chi}_k:\ten{T}^d_k \dxdt .
\label{granica_l}
\end{equation}
Since $\ten\varepsilon({\vc{u}_k}),  \ten{\varepsilon}^{\bf p}_k\in C_{w}([0,T],L^2(\Omega,\mathcal{S}^3))$, then we may pass
with $t_1\to 0$ and conclude
\begin{equation}\label{gran}
\mathcal{E}(\ten{\varepsilon}(\vc{u}_{k}(\tau)) , \ten{\varepsilon}^{\bf p}_{k}(\tau)) -
\mathcal{E}(\ten{\varepsilon}(\vc{u}_{k}(0)) , \ten{\varepsilon}^{\bf p}_{k}(0))=- 
\int_{0}^{t_2}\int_{\Omega}\ten{\chi}_k:\ten{T}^d_k \dxdt .
\end{equation}
Multiplying \eqref{gran} by  $\frac{1}{\mu}$ and integrating     over the interval $(t_2, t_2+\mu)$ we get
\begin{equation}
\begin{split}
\frac{1}{\mu}\int_{t_2}^{t_2+\mu}\mathcal{E}(\ten{\varepsilon}(\vc{u}_{k}(\tau)) , \ten{\varepsilon}^{\bf p}_{k}(\tau)) \dtau-
\mathcal{E}(\ten{\varepsilon}(\vc{u}_{k}(0)) , \ten{\varepsilon}^{\bf p}_{k}(0))=-
\frac{1}{\mu}\int_{t_2}^{t_2+\mu}\int_{0}^{\tau}\int_{\Omega}\ten{\chi}_k:\ten{T}^d_k \dxdt \dtau.
\end{split}\end{equation}
For brevity we denote 
\begin{equation}
F(s):=\int_{\Omega}\ten{\chi}_k:\ten{T}^d_k\dx,
\nonumber
\end{equation}
which is obviously in $L^1(0,T)$. Then we may apply the Fubini theorem
\begin{equation}\label{71}\begin{split}
\frac{1}{\mu}\int_{t_2}^{t_2+\mu}\int_0^\tau F(s) \ds\dtau&=\frac{1}{\mu}\int_{\mathbb{R}^2} 
\mathbf{1}_{\{0\le s\le \tau\}}(s)\mathbf{1}_{\{t_2\le \tau\le t_2+\mu\}}(\tau) F(s)\ds\dtau\\
&=\frac{1}{\mu}\int_\mathbb{R} \left(\int_\mathbb{R}
\mathbf{1}_{\{0\le s\le\tau\}}(s)\mathbf{1}_{\{t_2\le \tau\le t_2+\mu\}} (\tau)\dtau\right)F(s)\ds.
\end{split}
\end{equation}
The crucial  observation is that
\begin{equation}\label{psi}
\psi_{\mu,t_2}(s)=\frac{1}{\mu}\int_\mathbb{R}
\mathbf{1}_{\{0\le s\le\tau\}}(s)\mathbf{1}_{\{t_2\le \tau\le t_2+\mu\}} (\tau)\dtau.
\end{equation}
Hence using \eqref{mu2} and \eqref{mu4} we conclude
\begin{equation}
-\int_{0}^{T}\int_{\Omega}\ten{\chi}_k:\ten{T}^d_k \, \psi_{\mu,t_2}\dxdt \le
\liminf\limits_{l\to\infty}\left( 
-
\int_0^T\int_{\Omega}\ten{G}(\tilde{\theta} + \theta_{k,l},\tilde{\ten{T}}^d + \ten{T}^d_{k,l}):\ten{T}^d_{k,l} \,\psi_{\mu,t_2}\dxdt \right),
\end{equation}
which is nothing else than
\begin{equation}
\limsup_{l\rightarrow\infty}\int_{0}^{T}\int_{\Omega}\ten{G}(\tilde{\theta} + \theta_{k,l},\tilde{\ten{T}}^d + \ten{T}^d_{k,l}):\ten{T}^d_{k,l} \ \psi_{\mu,t_2}\dxdt \leq
\int_{0}^{T}\int_{\Omega}\ten{\chi}_k:\ten{T}^d_k \ \psi_{\mu,t_2}\dxdt .
\label{jedna_nierownosc}
\end{equation}
Let us observe now that
\begin{equation}
\begin{split}
\limsup\limits_{l\to\infty}&\int_{0}^{t_2}\int_{\Omega}\ten{G}(\tilde{\theta} + \theta_{k,l},\tilde{\ten{T}}^d + \ten{T}^d_{k,l}):\ten{T}^d_{k,l} \dxdt\\
&\le
\limsup\limits_{l\to\infty}\int_{0}^{t_2}\int_{\Omega}\ten{G}(\tilde{\theta} + \theta_{k,l},\tilde{\ten{T}}^d + \ten{T}^d_{k,l}):(\tilde{\ten{T}}^d + \ten{T}^d_{k,l}) \dxdt\\
&-\lim\limits_{l\to\infty}\int_{0}^{t_2}\int_{\Omega}\ten{G}(\tilde{\theta} + \theta_{k,l},\tilde{\ten{T}}^d + \ten{T}^d_{k,l}):\tilde{\ten{T}}^d  \dxdt\\
&\le
\limsup\limits_{l\to\infty}\int_{0}^{t_2+\mu}\int_{\Omega}\ten{G}(\tilde{\theta} + \theta_{k,l},\tilde{\ten{T}}^d + \ten{T}^d_{k,l}):(\tilde{\ten{T}}^d + \ten{T}^d_{k,l})\psi_{\mu,t_2} \dxdt\\
&-\lim\limits_{l\to\infty}\int_{0}^{t_2}\int_{\Omega}\ten{G}(\tilde{\theta} + \theta_{k,l},\tilde{\ten{T}}^d + \ten{T}^d_{k,l}):\tilde{\ten{T}}^d  \dxdt\\
&\le
\limsup_{l\rightarrow\infty}\int_{0}^{t_2+\mu}\int_{\Omega}\ten{G}(\tilde{\theta} + \theta_{k,l},\tilde{\ten{T}}^d + \ten{T}^d_{k,l}):\ten{T}^d_{k,l} \ \psi_{\mu,t_2}\dxdt\\
&+\lim\limits_{l\to\infty}\int_{0}^{t_2+\mu}\int_{\Omega}\ten{G}(\tilde{\theta} + \theta_{k,l},\tilde{\ten{T}}^d + \ten{T}^d_{k,l}):\tilde{\ten{T}}^d \ \psi_{\mu,t_2}\dxdt\\
&-\lim\limits_{l\to\infty}\int_{0}^{t_2}\int_{\Omega}\ten{G}(\tilde{\theta} + \theta_{k,l},\tilde{\ten{T}}^d + \ten{T}^d_{k,l}):\tilde{\ten{T}}^d  \dxdt\\
& \leq
\int_{0}^{t_2+\mu}\int_{\Omega}\ten{\chi}_k:\ten{T}^d_k \ \psi_{\mu,t_2}\dxdt
+\lim\limits_{l\to\infty}
\int_{t_2}^{t_2+\mu}\int_{\Omega}\ten{G}(\tilde{\theta} + \theta_{k,l},\tilde{\ten{T}}^d + \ten{T}^d_{k,l}):\tilde{\ten{T}}^d  \psi_{\mu,t_2}\dxdt.
\end{split}
\end{equation}
Passing with $\mu\to0$ yields \eqref{teza-8}. The proof is complete.
\end{proof}

To identify the weak limit~$\ten{\chi}_k$ we use the Minty-Browder trick. This procedure was presented in \cite{1}. We use monotonicity of function $\ten{G}(\cdot,\cdot)$ with respect to second variable and pointwise convergence of temperature $\{\theta_{k,l}\}_{k=1}^{\infty}$ to obtain that
\begin{equation}
\ten{\chi}_k =\ten{G}(\tilde{\theta} + \theta_{k},\tilde{\ten{T}}^d + \ten{T}_k^d )\quad \mbox{a.e. in}\ (0,T)\times\Omega.
\label{82}
\end{equation}
This implies that for every $k\in\mathbb{N}$
\begin{equation}
\ten{G}(\tilde{\theta} + \theta_{k,l},\tilde{\ten{T}}^d + \ten{T}_{k,l}^d)\rightharpoonup \ten{G}(\tilde{\theta} + \theta_k,\tilde{\ten{T}}^d + \ten{T}_k^d)\quad \mbox{in} \ L^{p'}(0,T,L^{p'}(\Omega,\mathcal{S}^3)),
\end{equation}
as $l\rightarrow \infty$. Moreover, using monotonicity of function $\ten{G}(\cdot,\cdot)$ and pointwise convergence of temperature $\{\theta_{k,l}\}_{k=1}^{\infty}$ again we get following lemma.
\begin{lemat}
For each $k\in\mathbb{N}$ it holds
\begin{equation}\begin{split}
\lim\limits_{l\to\infty}\int_0^T\int_\Omega&\ten{G}(\tilde{\theta} + \theta_{k,l},\tilde{\ten{T}}^d + \ten{T}_{k,l}^d):(\tilde{\ten{T}}^d + \ten{T}_{k,l}^d )\dxdt
= \int_0^T\int_\Omega\ten{G}(\tilde{\theta} + \theta_k,\tilde{\ten{T}}^d + \ten{T}_k^d):(\tilde{\ten{T}}^d + \ten{T}_k^d)\dxdt.
\end{split}\end{equation}
\end{lemat}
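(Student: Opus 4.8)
The plan is to prove the asserted convergence by squeezing the limit between a $\limsup$ bound that is handed to us by Lemma \ref{lm:8} and a matching $\liminf$ bound produced by a Minty--Browder-type monotonicity argument. Throughout I abbreviate $\ten{S}_{k,l}:=\tilde{\ten{T}}^d+\ten{T}_{k,l}^d$ and $\ten{S}_k:=\tilde{\ten{T}}^d+\ten{T}_k^d$, so that $\ten{S}_{k,l}\rightharpoonup\ten{S}_k$ weakly in $L^p(0,T,L^p(\Omega,\mathcal{S}^3_d))$, and $\ten{G}_{k,l}:=\ten{G}(\tilde{\theta}+\theta_{k,l},\ten{S}_{k,l})$, which by \eqref{82} satisfies $\ten{G}_{k,l}\rightharpoonup\ten{\chi}_k=\ten{G}(\tilde{\theta}+\theta_k,\ten{S}_k)$ weakly in $L^{p'}(0,T,L^{p'}(\Omega,\mathcal{S}^3_d))$.

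For the upper bound I would first split
\begin{equation}
\int_0^T\int_\Omega \ten{G}_{k,l}:\ten{S}_{k,l}\dxdt
=\int_0^T\int_\Omega \ten{G}_{k,l}:\ten{T}_{k,l}^d\dxdt
+\int_0^T\int_\Omega \ten{G}_{k,l}:\tilde{\ten{T}}^d\dxdt.
\nonumber
\end{equation}
Since $\tilde{\ten{T}}^d$ is a fixed element of $L^p$ and $\ten{G}_{k,l}\rightharpoonup\ten{\chi}_k$ weakly in $L^{p'}$, the last term converges to $\int_0^T\int_\Omega\ten{\chi}_k:\tilde{\ten{T}}^d\dxdt$. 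Applying Lemma \ref{lm:8} with $t=T$ to the first term then yields $\limsup_{l\to\infty}\int_0^T\int_\Omega \ten{G}_{k,l}:\ten{S}_{k,l}\dxdt\le\int_0^T\int_\Omega\ten{\chi}_k:\ten{S}_k\dxdt$, which is the desired upper bound because $\ten{\chi}_k=\ten{G}(\tilde{\theta}+\theta_k,\ten{S}_k)$.

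For the lower bound I would invoke the monotonicity in Assumption \ref{ass_G}a). For an arbitrary fixed $\ten{B}\in L^p(0,T,L^p(\Omega,\mathcal{S}^3_d))$, integrating $(\ten{G}(\tilde{\theta}+\theta_{k,l},\ten{S}_{k,l})-\ten{G}(\tilde{\theta}+\theta_{k,l},\ten{B})):(\ten{S}_{k,l}-\ten{B})\ge0$ gives
\begin{equation}
\int_0^T\int_\Omega\ten{G}_{k,l}:\ten{S}_{k,l}\dxdt
\ge\int_0^T\int_\Omega\ten{G}_{k,l}:\ten{B}\dxdt
+\int_0^T\int_\Omega\ten{G}(\tilde{\theta}+\theta_{k,l},\ten{B}):(\ten{S}_{k,l}-\ten{B})\dxdt.
\nonumber
\end{equation}
The key point is that $\ten{G}(\tilde{\theta}+\theta_{k,l},\ten{B})\to\ten{G}(\tilde{\theta}+\theta_k,\ten{B})$ strongly in $L^{p'}$: this follows from the a.e. convergence $\theta_{k,l}\to\theta_k$ and continuity of $\ten{G}$ in its first slot, with the growth bound b) supplying the fixed $L^{p'}$-majorant $C(1+|\ten{B}|)^{p-1}$ for a dominated-convergence argument. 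Combining this strong convergence with $\ten{S}_{k,l}\rightharpoonup\ten{S}_k$ and $\ten{G}_{k,l}\rightharpoonup\ten{\chi}_k$, I take $\liminf_{l\to\infty}$ and then choose $\ten{B}=\ten{S}_k$ (admissible since $\ten{S}_k\in L^p$ and, by \eqref{82}, $\ten{G}(\tilde{\theta}+\theta_k,\ten{S}_k)=\ten{\chi}_k\in L^{p'}$), which makes the last term vanish and leaves $\liminf_{l\to\infty}\int_0^T\int_\Omega\ten{G}_{k,l}:\ten{S}_{k,l}\dxdt\ge\int_0^T\int_\Omega\ten{\chi}_k:\ten{S}_k\dxdt$. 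Together with the upper bound, the limit exists and equals $\int_0^T\int_\Omega\ten{G}(\tilde{\theta}+\theta_k,\ten{S}_k):\ten{S}_k\dxdt$.

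I expect the main obstacle to be exactly the passage to the limit in the mixed term $\int\ten{G}(\tilde{\theta}+\theta_{k,l},\ten{B}):(\ten{S}_{k,l}-\ten{B})$. Because the temperature sits in the first argument of $\ten{G}$ and converges only weakly in $L^2(0,T,W^{1,2})$, weak convergence alone is not enough; the argument genuinely needs the a.e. convergence $\theta_{k,l}\to\theta_k$ (available from the compactness listed before \eqref{limit1}) together with the fact that the constant $C$ in Assumption \ref{ass_G}b) is temperature-independent, so as to obtain an $l$-uniform $L^{p'}$-domination and hence the strong convergence that legitimizes pairing $\ten{G}(\tilde{\theta}+\theta_{k,l},\ten{B})$ against the weakly convergent $\ten{S}_{k,l}$. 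Everything else — the split, the use of Lemma \ref{lm:8}, and the final sandwich — is routine.
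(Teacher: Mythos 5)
Your proposal is correct and coincides with the paper's intended argument: the paper disposes of this lemma in a single line (monotonicity of $\ten{G}$ together with pointwise convergence of the temperature, used ``again'' after the Minty--Browder identification \eqref{82}), and your two-sided sandwich --- Lemma \ref{lm:8} plus weak convergence of $\ten{G}(\tilde{\theta}+\theta_{k,l},\tilde{\ten{T}}^d+\ten{T}^d_{k,l})$ against the fixed $\tilde{\ten{T}}^d$ for the upper bound, and the monotonicity/dominated-convergence argument with the test tensor $\ten{B}=\tilde{\ten{T}}^d+\ten{T}^d_k$ for the lower bound --- is exactly what that remark compresses. Your observation that the temperature-independence of the constant in Assumption \ref{ass_G}b) is what licenses the dominated-convergence step (and hence the strong $L^{p'}$ convergence needed to pair against the merely weakly convergent stresses) correctly identifies where the a.e.\ convergence of $\theta_{k,l}$ enters.
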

Now we are able to pass to the limit in the heat equation. Namely, we obtain the following equality for all test functions $\phi\in C^\infty([0,T]\times\Omega)$ 
\begin{equation}
\begin{split}
-\int_0^T\int_{\Omega} &\theta_k\phi_t \dxdt -
\int_{\Omega} \theta_k(x,0)\phi(x,0) \dx    + 
\int_0^T\int_{\Omega}  \nabla\theta_k\cdot\nabla\phi  \dxdt  
\\
&+ \int_Q \alpha\mbox{div}(\vc{u}_{k})_t \phi\dxdt = 
\int_0^T\int_{\Omega} \mathcal{T}_k\left((\ten{T}^d_{k}+\tilde{\ten{T}}^d):\ten{G}(\tilde{\theta} + \theta_{k},\tilde{\ten{T}}^d + \ten{T}^d_{k})\right)\phi \dxdt,
\\
\label{eq:after_limit_l_2}
\end{split}
\end{equation}
which completes the first limit passage.

\subsection{Boundedness of approximate solutions and limit passage with $k\to\infty$.}
\label{sec:2.3}

Since in previous section some uniform bounds were proved only with respect to $l$, we present here estimates with respect to $k$. At the beginning we recall two lemmas from \cite{1}, which are presented without proofs. Then we prove the third one which give us required estimates for time derivative of displacement.

\begin{lemat}{\cite[Lemma 3.6]{1}}

The sequence $\{\ten{\varepsilon}^{\bf p}_{k}\}$ is uniformly bounded in $W^{1,p'}(0,T,L^{p'}(\Omega,\mathcal{S}^3))$ with respect to $k$. 
\label{wsp_org_ep}
\end{lemat}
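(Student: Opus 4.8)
The plan is to split the $W^{1,p'}$-norm into its two constituent pieces and bound them in turn, first the time derivative $(\ten{\varepsilon}^{\bf p}_k)_t$ and then $\ten{\varepsilon}^{\bf p}_k$ itself, exploiting the identification of the limit already obtained in the previous subsection. The whole estimate reduces to the growth condition of Assumption \ref{ass_G} together with an exponent bookkeeping that matches precisely the $L^p$-control of the stress coming from the energy estimate.

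First I would recall that, combining the identity \eqref{65} with the identification \eqref{82} of the weak limit, one has the pointwise equality
\[
(\ten{\varepsilon}^{\bf p}_k)_t = \ten{G}(\tilde{\theta} + \theta_k,\tilde{\ten{T}}^d + \ten{T}_k^d)\qquad\mbox{a.e. in }(0,T)\times\Omega.
\]
Growth condition b) of Assumption \ref{ass_G} then yields the pointwise bound $|(\ten{\varepsilon}^{\bf p}_k)_t| \leq C(1 + |\tilde{\ten{T}}^d + \ten{T}_k^d|)^{p-1}$. Raising this to the power $p'$ and using the crucial arithmetic identity $(p-1)p' = p$, I integrate over $\Omega\times(0,T)$ to obtain
\[
\|(\ten{\varepsilon}^{\bf p}_k)_t\|_{L^{p'}(0,T,L^{p'}(\Omega))}^{p'} \leq C\bigl(|\Omega|\,T + \|\tilde{\ten{T}}^d + \ten{T}_k^d\|_{L^p(0,T,L^p(\Omega))}^{p}\bigr).
\]
The right-hand side is bounded uniformly in $k$: the $(k,l)$-uniform $L^p$-bound on $\tilde{\ten{T}}^d + \ten{T}_{k,l}^d$ from Lemma \ref{pom_2} survives the weak limit $l\to\infty$ by weak lower semicontinuity of the $L^p$-norm, exactly as recorded in Remark \ref{wsp_ogr_T}.

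To close the estimate I recover $\ten{\varepsilon}^{\bf p}_k$ from its derivative through $\ten{\varepsilon}^{\bf p}_k(t) = \ten{\varepsilon}^{\bf p}_k(0) + \int_0^t (\ten{\varepsilon}^{\bf p}_k)_s \ds$. The initial datum is the $\ten{D}$-orthogonal projection of $\ten{\varepsilon}^{\bf p}_0 \in L^2(\Omega,\mathcal{S}^3_d)$ onto the finite-dimensional space, so $\|\ten{\varepsilon}^{\bf p}_k(0)\|_{L^2(\Omega)} \leq \|\ten{\varepsilon}^{\bf p}_0\|_{L^2(\Omega)}$; since $p\geq 2$ on a bounded domain we have $L^2(\Omega)\hookrightarrow L^{p'}(\Omega)$, hence $\ten{\varepsilon}^{\bf p}_k(0)$ is bounded in $L^{p'}(\Omega)$ independently of $k$. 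Minkowski's integral inequality together with Hölder in time then gives
\[
\|\ten{\varepsilon}^{\bf p}_k(t)\|_{L^{p'}(\Omega)} \leq \|\ten{\varepsilon}^{\bf p}_k(0)\|_{L^{p'}(\Omega)} + T^{1/p}\,\|(\ten{\varepsilon}^{\bf p}_k)_t\|_{L^{p'}(0,T,L^{p'}(\Omega))},
\]
and integrating the $p'$-th power over $(0,T)$ produces the uniform bound on $\ten{\varepsilon}^{\bf p}_k$ as well.

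There is no genuine analytical obstacle here. Once the time derivative has been identified with the $\ten{G}$-term, the bound is a direct consequence of the growth condition and the identity $(p-1)p'=p$, which is precisely what makes the $L^p$-control of the stress translate into an $L^{p'}$-control of $(\ten{\varepsilon}^{\bf p}_k)_t$. The only points requiring mild care — and the closest thing to a difficulty — are verifying that the uniform-in-$k$ bound on $\tilde{\ten{T}}^d + \ten{T}_k^d$ is correctly inherited from Lemma \ref{pom_2} through weak lower semicontinuity, and that the initial data are estimated independently of $k$ via the embedding $L^2(\Omega)\hookrightarrow L^{p'}(\Omega)$; both are routine.
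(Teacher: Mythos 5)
Your proof is correct and takes essentially the same route as the proof the paper invokes from \cite[Lemma 3.6]{1}: identify $(\ten{\varepsilon}^{\bf p}_k)_t$ with $\ten{G}(\tilde{\theta}+\theta_k,\tilde{\ten{T}}^d+\ten{T}_k^d)$ via \eqref{65} and \eqref{82}, turn the uniform $L^p$-bound on $\tilde{\ten{T}}^d+\ten{T}^d_k$ inherited from Lemma \ref{pom_2} by weak lower semicontinuity into an $L^{p'}$-bound on the time derivative using growth condition b) and $(p-1)p'=p$, and then recover $\ten{\varepsilon}^{\bf p}_k$ itself by integrating from the projected initial datum. No gaps worth noting.
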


\begin{lemat}{\cite[Lemma 3.7]{1}}

The sequence $\{\vc{u}_{k}\}$ is uniformly bounded in $L^{p'}(0,T,W^{1,p'}_0(\Omega,\mathbb{R}^3))$ with respect to $k$.
\label{wsp_org_u}
\end{lemat}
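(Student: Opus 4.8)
The plan is to control $\ten{\varepsilon}(\vc{u}_k)$ in $L^{p'}(0,T,L^{p'}(\Omega,\mathcal{S}^3))$ and then to invoke Korn's inequality. The starting point is the pointwise-in-time algebraic identity carried by the constitutive law,
\begin{equation}
\ten{\varepsilon}(\vc{u}_k) = \ten{D}^{-1}\ten{T}_k + \ten{\varepsilon}^{\bf p}_k,
\nonumber
\end{equation}
which makes sense because $\ten{D}$ is linear, positively definite and bounded, hence boundedly invertible on $\mathcal{S}^3$. I would estimate the two summands in different spaces: the elastic strain $\ten{D}^{-1}\ten{T}_k$ in $L^2(0,T,L^2)$ and the visco-elastic strain $\ten{\varepsilon}^{\bf p}_k$ in $L^{p'}(0,T,L^{p'})$, and then merge them using $p'\le 2$ together with the finiteness of $|\Omega|$ and of $T$.

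For the elastic part I would use the energy estimate. By Definition \ref{energia} and positive definiteness of $\ten{D}$ one has $\mathcal{E}(\ten{\varepsilon}(\vc{u}_{k,l}),\ten{\varepsilon}^{\bf p}_{k,l}) \ge c\,\|\ten{\varepsilon}(\vc{u}_{k,l}) - \ten{\varepsilon}^{\bf p}_{k,l}\|^2_{L^2(\Omega)}$, and since $\ten{T}_{k,l}=\ten{D}(\ten{\varepsilon}(\vc{u}_{k,l}) - \ten{\varepsilon}^{\bf p}_{k,l})$ with $\ten{D}$ bounded, Lemma \ref{pom_2} gives a bound on $\{\ten{T}_{k,l}\}$ in $L^\infty(0,T,L^2(\Omega,\mathcal{S}^3))$ that is uniform in both $k$ and $l$; in particular $\{\ten{T}_{k,l}\}$ is bounded in $L^2(0,T,L^2)$ uniformly. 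Passing to the limit $l\to\infty$ along the weak convergence $\ten{T}_{k,l}\rightharpoonup\ten{T}_k$ in $L^2(0,T,L^2)$ and using weak lower semicontinuity of the norm, I conclude that $\{\ten{T}_k\}$, and hence $\{\ten{\varepsilon}(\vc{u}_k) - \ten{\varepsilon}^{\bf p}_k\}=\{\ten{D}^{-1}\ten{T}_k\}$, is bounded in $L^2(0,T,L^2(\Omega,\mathcal{S}^3))$ uniformly in $k$.

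For the visco-elastic part, Lemma \ref{wsp_org_ep} already provides a uniform-in-$k$ bound on $\{\ten{\varepsilon}^{\bf p}_k\}$ in $W^{1,p'}(0,T,L^{p'}(\Omega,\mathcal{S}^3))$, in particular in $L^{p'}(0,T,L^{p'})$. Adding the two contributions and using the continuous embedding $L^2(0,T,L^2(\Omega))\hookrightarrow L^{p'}(0,T,L^{p'}(\Omega))$ (valid since $p'\le 2$ and both $(0,T)$ and $\Omega$ have finite measure) yields a uniform-in-$k$ bound on $\{\ten{\varepsilon}(\vc{u}_k)\}$ in $L^{p'}(0,T,L^{p'}(\Omega,\mathcal{S}^3))$. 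Finally, since each $\vc{u}_k(t)$ belongs to $W_0^{1,2}(\Omega,\mathbb{R}^3)\subset W_0^{1,p'}(\Omega,\mathbb{R}^3)$, Korn's inequality in $L^{p'}$ (for $1<p'<\infty$) together with Poincar\'e's inequality estimates $\|\vc{u}_k\|_{W^{1,p'}(\Omega)}$ by $\|\ten{\varepsilon}(\vc{u}_k)\|_{L^{p'}(\Omega)}$; integrating over $(0,T)$ closes the argument.

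The step I expect to be the main obstacle is the first one. The naive route would be to test the momentum balance with $\vc{u}_k$, obtaining $\int_\Omega \ten{T}_k:\ten{\varepsilon}(\vc{u}_k)\dx=0$ (the $\alpha\ten{I}$ contribution dropping out because $\int_\Omega \div\vc{u}_k\dx=0$), and then to bound $\ten{T}_k$ by $\ten{\varepsilon}^{\bf p}_k$; this however forces an $L^2(\Omega)$ norm of the visco-elastic strain, which is unavailable when $p>2$, since there $\ten{\varepsilon}^{\bf p}_k$ is only controlled in $L^{p'}$. The virtue of the energy-based route is exactly that the energy bounds the \emph{elastic} strain $\ten{\varepsilon}(\vc{u}_k)-\ten{\varepsilon}^{\bf p}_k$ in the stronger space $L^2$, even though neither $\ten{\varepsilon}(\vc{u}_k)$ nor $\ten{\varepsilon}^{\bf p}_k$ separately lies in $L^2$; one must therefore peel off the elastic part rather than estimate $\vc{u}_k$ directly, and control the trace (volumetric) component of $\ten{T}_k$ through the full energy rather than through the deviatoric bound of Remark \ref{wsp_ogr_T}.
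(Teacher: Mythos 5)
Your proposal is correct and follows essentially the route the paper relies on: the lemma is recalled from \cite{1} without proof, and the argument there is precisely your decomposition $\ten{\varepsilon}(\vc{u}_{k})=\ten{D}^{-1}\ten{T}_{k}+\ten{\varepsilon}^{\bf p}_{k}$, with $\ten{T}_{k}$ controlled in $L^{2}(0,T,L^{2}(\Omega,\mathcal{S}^3))$ via the energy bound of Lemma \ref{pom_2} and weak lower semicontinuity of the norm under $\ten{T}_{k,l}\rightharpoonup\ten{T}_{k}$, with $\ten{\varepsilon}^{\bf p}_{k}$ controlled via Lemma \ref{wsp_org_ep}, and with the conclusion drawn from Korn's and Poincar\'e's inequalities for the zero-trace functions $\vc{u}_{k}$ after embedding $L^{2}$ into $L^{p'}$ on the finite-measure cylinder. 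Your closing observation --- that the full energy bounds the entire tensor $\ten{T}_{k,l}$ in $L^{\infty}(0,T,L^{2}(\Omega,\mathcal{S}^3))$, whereas the deviatoric bound of Remark \ref{wsp_ogr_T} alone would not suffice when $p>2$ --- is exactly the right point.
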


\begin{lemat}
The sequence $\{(\vc{u}_{k})_t\}$ is uniformly bounded in $L^{p'}(0,T,W^{1,p'}_0(\Omega,\mathbb{R}^3))$.
\label{lm:2.10}
\end{lemat}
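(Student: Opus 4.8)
The plan is to recognise that $(\vc{u}_k)_t$ solves a linear elasticity system of exactly the same type as the one governing $\vc{u}_k$ itself, with $\ten{\varepsilon}^{\bf p}_k$ simply replaced by its time derivative, and then to rerun the uniform-in-$k$ estimate behind Lemma \ref{wsp_org_u}. First I would produce the $k$-level counterpart of \eqref{eq:11aa}. Starting from \eqref{eq:11aa} at the approximation level $(k,l)$, I multiply by a test function $\varphi\in C^\infty([0,T])$, integrate over $(0,T)$, and pass to the limit $l\to\infty$. On the left the convergence $(\vc{u}_{k,l})_t\rightharpoonup(\vc{u}_k)_t$ in $L^{p'}(0,T,W^{1,2}_0(\Omega,\mathbb{R}^3))$ makes $(\ten{\varepsilon}(\vc{u}_{k,l}))_t$ converge weakly in $L^{p'}(0,T,L^2(\Omega,\mathcal{S}^3))$ and hence pairs against the fixed $\ten{D}\ten{\varepsilon}(\vc{w}_n)\,\varphi(t)$; on the right the convergence $(\ten{\varepsilon}^{\bf p}_{k,l})_t\rightharpoonup(\ten{\varepsilon}^{\bf p}_k)_t$ in $L^{p'}(0,T,(H^s(\Omega,\mathcal{S}^3))')$ pairs against $\ten{D}\ten{\varepsilon}(\vc{w}_n)\in H^s(\Omega,\mathcal{S}^3)$, the basis functions being smooth. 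This yields, for a.a.\ $t$ and every $n\le k$,
\begin{equation}
\int_\Omega \ten{D}(\ten{\varepsilon}(\vc{u}_k))_t:\ten{\varepsilon}(\vc{w}_n)\dx = \int_\Omega \ten{D}(\ten{\varepsilon}^{\bf p}_k)_t:\ten{\varepsilon}(\vc{w}_n)\dx.
\nonumber
\end{equation}

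Next I would observe that this is precisely the $k$-dimensional Galerkin formulation of the elliptic system $-\div(\ten{D}\ten{\varepsilon}((\vc{u}_k)_t))=-\div(\ten{D}(\ten{\varepsilon}^{\bf p}_k)_t)$ with homogeneous Dirichlet data, and that it has exactly the structure of the problem solved by $\vc{u}_k$ obtained from \eqref{app_system}$_{(1)}$ after the $l$-limit; there the constant contribution $\alpha\ten{I}$ drops out because $\int_\Omega\div\vc{w}_n\dx=0$. The only change is the source term, $\ten{D}\ten{\varepsilon}^{\bf p}_k$ being replaced by $\ten{D}(\ten{\varepsilon}^{\bf p}_k)_t$. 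By Lemma \ref{wsp_org_ep} the sequence $\{\ten{\varepsilon}^{\bf p}_k\}$ is uniformly bounded in $W^{1,p'}(0,T,L^{p'}(\Omega,\mathcal{S}^3))$, so $\{(\ten{\varepsilon}^{\bf p}_k)_t\}$ is uniformly bounded in $L^{p'}(0,T,L^{p'}(\Omega,\mathcal{S}^3))$ with respect to $k$, and, $\ten{D}$ being bounded, so is $\{\ten{D}(\ten{\varepsilon}^{\bf p}_k)_t\}$. Hence the new source enjoys exactly the bound that $\ten{D}\ten{\varepsilon}^{\bf p}_k$ plays in the proof of Lemma \ref{wsp_org_u}.

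Therefore I would invoke the very argument establishing Lemma \ref{wsp_org_u}: the $L^{p'}$-elliptic regularity of the constant-coefficient, strongly elliptic operator $-\div(\ten{D}\ten{\varepsilon}(\cdot))$ on the $C^2$ domain $\Omega$, combined with the $k$-uniform stability of the Galerkin projection guaranteed by the construction of the basis $\{\vc{w}_n\}$, gives $\|(\vc{u}_k)_t\|_{L^{p'}(0,T,W^{1,p'}_0(\Omega,\mathbb{R}^3))}\le C\|(\ten{\varepsilon}^{\bf p}_k)_t\|_{L^{p'}(0,T,L^{p'}(\Omega,\mathcal{S}^3))}\le C$, with $C$ independent of $k$. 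I expect the main obstacle to be exactly this last ingredient: plain elliptic regularity does not transfer automatically to the finite-dimensional subspaces with a constant uniform in $k$, and one must rely on the special structure of the basis (eigenfunctions of the $\ten{D}$-weighted elasticity operator, so that the energy projection coincides with the $L^2$ projection) to ensure the $W^{1,p'}$ estimate survives the approximation uniformly. Once this is granted, as in Lemma \ref{wsp_org_u}, the remaining steps are routine and the uniform bound follows.
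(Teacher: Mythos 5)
Your opening reduction coincides with the paper's: multiply \eqref{eq:11aa} by $\varphi(t)\in C^{\infty}([0,T])$, integrate over $(0,T)$, pass to the limit $l\to\infty$ using the convergences already established, and note that the resulting identity exhibits $(\vc{u}_k)_t$ as solving the same finite-dimensional problem as $\vc{u}_k$ with $\ten{D}\ten{\varepsilon}^{\bf p}_k$ replaced by $\ten{D}(\ten{\varepsilon}^{\bf p}_k)_t$, a source that Lemma \ref{wsp_org_ep} bounds uniformly in $L^{p'}(0,T,L^{p'}(\Omega,\mathcal{S}^3))$. Where you then diverge is the mechanism that turns this into a uniform $W^{1,p'}_0$ bound, and this is exactly where your proposal has a gap. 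You invoke $L^{p'}$-elliptic regularity of $-\div(\ten{D}\ten{\varepsilon}(\cdot))$ together with a ``$k$-uniform $W^{1,p'}$-stability of the Galerkin projection'', you yourself identify this stability as the main obstacle, and you then grant it rather than prove it. It is not an off-the-shelf fact: for spectral Galerkin subspaces such stability amounts to uniform $L^p$-bounds for projections associated with the elasticity operator, and nothing in the construction of $\{\vc{w}_n\}$ in Appendix \ref{sec:constr} delivers it automatically.

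The paper's proof avoids this issue entirely: no elliptic regularity is used. Instead it argues by duality through the projection $P^k\ten{v}:=\sum_{i=1}^k(\ten{v},\ten{\varepsilon}(\vc{w}_i))_{\ten{D}}\ten{\varepsilon}(\vc{w}_i)$, which is orthogonal for $(\cdot,\cdot)_{\ten{D}}$, satisfies $P^k(\ten{\varepsilon}(\vc{u}_k))_t=(\ten{\varepsilon}(\vc{u}_k))_t$, and is asserted to be nonexpansive on $L^p$. For $\ten{\varphi}\in L^p(0,T,L^p(\Omega,\mathcal{S}^3))$, self-adjointness of $P^k$ and the limit identity give
\begin{equation}
\int_0^T\int_{\Omega}\ten{D}(\ten{\varepsilon}(\vc{u}_k))_t:\ten{\varphi}\dxdt
=\int_0^T\int_{\Omega}\ten{D}(\ten{\varepsilon}^{\bf p}_k)_t:P^k\ten{\varphi}\dxdt
\le\|(\ten{\varepsilon}^{\bf p}_k)_t\|_{L^{p'}(0,T,L^{p'}(\Omega,\mathcal{S}^3))}\,\|\ten{\varphi}\|_{L^{p}(0,T,L^{p}(\Omega,\mathcal{S}^3))},
\nonumber
\end{equation}
so taking the supremum over $\|\ten{\varphi}\|_{L^p(0,T,L^p(\Omega,\mathcal{S}^3))}\le 1$ yields $\|(\ten{\varepsilon}(\vc{u}_k))_t\|_{L^{p'}(0,T,L^{p'}(\Omega,\mathcal{S}^3))}\le\|(\ten{\varepsilon}^{\bf p}_k)_t\|_{L^{p'}(0,T,L^{p'}(\Omega,\mathcal{S}^3))}$ with a constant that is uniform in $k$ by inspection; Korn--Poincar\'{e} for the homogeneous Dirichlet problem then gives the bound in $W^{1,p'}_0(\Omega,\mathbb{R}^3)$. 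In short, your reduction is right, and your instinct that the Lemma \ref{wsp_org_u} estimate reruns with the differentiated source is exactly what the paper does; but the estimate itself is obtained by this projection-duality computation, whose only nontrivial input is the $L^p$-stability of $P^k$, not by transferring elliptic regularity to the finite-dimensional spaces. To complete your argument you would either need to actually prove the Galerkin stability you granted, or switch to the duality route above.
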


\begin{proof}
Multiplying \eqref{eq:11aa} by function $\varphi(t) \in C^{\infty}([0,T])$, integrating over time interval $(0,T)$ and passing to the limit with $l\to\infty$ we obtain
\begin{equation}
\int_0^T\int_{\Omega} \ten{D}(\ten{\varepsilon}(\vc{u}_{k}))_t: \ten{\varepsilon}(\vc{w}_{n}) \varphi(t) \dx\dt =
\int_0^T\int_{\Omega} \ten{D}(\ten{\varepsilon}^{\bf p}_{k})_t:
\ten{\varepsilon}(\vc{w}_{n}) \varphi(t) \dx \dt.
\label{eq:11aaa}
\end{equation}
Let us define the projection 
\begin{equation}
P^k: L^p(\Omega,\mathcal{S}^3) \to \mbox{lin}\{\ten{\varepsilon}(\vc{w}_1),...,\ten{\varepsilon}(\vc{w}_k)\}, \quad P^k\ten{v}:=\sum_{i=1}^{k}(\ten{v},\ten{\varepsilon}(\vc{w}_i))_{\ten{D}}\ten{\varepsilon}(\vc{w}_i).
\end{equation}
Let us take $\ten{\varphi} \in L^p(0,T,L^p(\Omega,\mathcal{S}^3))$. Property of projection implies that $\|P^k\ten{\varphi} \|_{L^{p}(0,T,L^{p}(\Omega,\mathcal{S}^3))}\leq \|\ten{\varphi} \|_{L^{p}(0,T,L^{p}(\Omega,\mathcal{S}^3))}$. Using the fact that $P^k(\ten{\varepsilon}(\vc{u}_{k}))_t=(\ten{\varepsilon}(\vc{u}_{k}))_t$ and using \eqref{eq:11aaa} we obtain that
\begin{equation}
\begin{split}
\int_0^T\int_{\Omega} \ten{D}(\ten{\varepsilon}(\vc{u}_{k}))_t: \ten{\varphi} \dx\dt &=
\int_0^T\int_{\Omega} \ten{D}(\ten{\varepsilon}(\vc{u}_{k}))_t: P^k\ten{\varphi} \dx\dt
= \int_0^T\int_{\Omega} \ten{D}(\ten{\varepsilon}^{\bf p}_{k})_t: P^k\ten{\varphi} \dx \dt
\\
& \leq 
\|(\ten{\varepsilon}^{\bf p}_{k})_t\|_{L^{p'}(0,T,L^{p'}(\Omega,\mathcal{S}^3))}
\|P^k\ten{\varphi} \|_{L^{p}(0,T,L^{p}(\Omega,\mathcal{S}^3))}
\\
& \leq 
\|(\ten{\varepsilon}^{\bf p}_{k})_t\|_{L^{p'}(0,T,L^{p'}(\Omega,\mathcal{S}^3))}
\|\ten{\varphi} \|_{L^{p}(0,T,L^{p}(\Omega,\mathcal{S}^3))}.
\end{split}
\end{equation}
Thus
\begin{equation}
\begin{split}
\|(\ten{\varepsilon}(\vc{u}_{k}))_t \|_{L^{p'}(0,T,L^{p'}(\Omega,\mathcal{S}^3))} & = \sup_{\ten{\varphi}\in L^p(0,T,L^p(\Omega,\mathcal{S}^3))\atop
\|\ten{\varphi}\|_{ L^p(0,T,L^p(\Omega,\mathcal{S}^3))}\leq 1}
\int_0^T\int_{\Omega} \ten{D}(\ten{\varepsilon}(\vc{u}_{k}))_t: \ten{\varphi} \dx\dt
\\
& \leq 
\|(\ten{\varepsilon}^{\bf p}_{k})_t\|_{L^{p'}(0,T,L^{p'}(\Omega,\mathcal{S}^3))}.
\end{split}
\end{equation}
This, together with Poincar\'{e}'s inequality and Lemma \ref{wsp_org_ep}, completes the proof.
\end{proof}

Let us focus on heat equation \eqref{eq:after_limit_l_2}. It is a weak formulation of equation 
\begin{equation}
(\theta_{k})_t - \Delta\theta_k 
 = 
\mathcal{T}_k\left((\tilde{\ten{T}}^d+\ten{T}^d_{k}):\ten{G}(\tilde{\theta} + \theta_{k},\tilde{\ten{T}}^d + \ten{T}^d_{k})\right) - \alpha{\div}(\vc{u}_{k})_t.
\label{eq:254}
\end{equation}
From previous estimates we get that right-hand side of above mentioned equation is uniformly bounded in $L^1(0,T,L^1(\Omega))$. Using Boccardo and Galllou\"{e}t approach for parabolic equation with Neumann boundary conditions, see Appendix \ref{sec:BG} or \cite{PhDFilip,1}, we observe that for each $1<q<\frac{5}{4}$ there exists $\theta \in L^q(0,T,W^{1,q}(\Omega))$ such that 
\begin{equation}
\theta_k \rightharpoonup \theta  \mbox{ weakly in } L^q(0,T,W^{1,q}(\Omega)). 
\end{equation}
Note that we need a weak convergence of right-hand side of \eqref{eq:254} to obtain strong convergence of $\{\theta_k\}$. Moreover, the rest of uniform estimates allows us to conclude that, at least for a subsequence, the following holds
\begin{equation}
\begin{array}{cl}
\theta_k \rightarrow \theta & \mbox{ a.e. in }\Omega\times(0,T),\\
\vc{u}_k \rightharpoonup  \vc{u} & \mbox{ weakly in } L^{p'}(0,T,W^{1,p'}_0(\Omega,\mathbb{R}^3)),\\
\ten{T}_k \rightharpoonup \ten{T} & \mbox{ weakly in }  L^2(0,T,L^2(\Omega,\mathcal{S}^3)),\\
\ten{T}^d_k \rightharpoonup \ten{T}^d & \mbox{ weakly in }  L^p(0,T,L^p(\Omega,\mathcal{S}^3_d)),\\
\ten{G}(\tilde{\theta} + \theta_k,\tilde{\ten{T}}^d +\ten{T}_k^d) \rightharpoonup \ten{\chi} & \mbox{ weakly in }  L^{p'}(0,T,L^{p'}(\Omega,\mathcal{S}^3_d)), \\
(\ten{\varepsilon}^{\bf p}_k)_t \rightharpoonup (\ten{\varepsilon}^{\bf p})_t & \mbox{ weakly in }  L^{p'}(0,T,L^{p'}(\Omega,\mathcal{S}^3_d)),
\\
(\ten{u}_{k})_t \rightharpoonup \ten{u}_t & \mbox{weakly in } L^{p'}(0,T,W^{1,p'}_0(\Omega,\mathbb{R}^3)).
\end{array}
\end{equation}
Consequently, passing to the limit in \eqref{limit1}, \eqref{65} we obtain
\begin{equation}\label{limit1a}
\begin{split}
\int_0^T\int_{\Omega}(\ten{T}-\alpha\ten{I}):\nabla\vc{\varphi} \dxdt =0
\end{split}
\end{equation}
\begin{equation}\label{65a}
\int_0^T\int_{\Omega}(\ten{\varepsilon}^{\bf p})_t : \ten{\psi}\dxdt = 
\int_0^T\int_{\Omega}\ten{\chi}  :  \ten{\psi}\dxdt
 \end{equation}
for all $ \ten{\varphi}\in C^\infty([0,T],W^{1,2}(\Omega,\mathcal{S}^3))$ (and then also for all $ \ten{\varphi}\in L^2(0,T;W^{1,2}(\Omega,\mathcal{S}^3))$) and for all $\ten{\psi}\in L^p(0,T;L^p(\Omega,\mathcal{S}^3))$. To complete this limit passage it remains to characterize the weak limit $\ten{\chi}$ and pass to the limit in the heat equation \eqref{eq:after_limit_l_2}. Again there is a problem with right-hand side of \eqref{eq:after_limit_l_2}. To deal with this issue we follow the similar lines as in the limit passage with $l\to\infty$. 

\begin{lemat}
The following inequality holds for the solution of approximate systems.
\begin{equation}
\limsup_{k\rightarrow\infty}\int_{0}^{t_2}\int_{\Omega}\ten{G}(\tilde{\theta} + \theta_{k},\tilde{\ten{T}}^d + \ten{T}^d_{k}):\ten{T}^d_k \dxdt \leq
\int_{0}^{t_2}\int_{\Omega}\ten{\chi}:\ten{T}^d \dxdt .
\label{jedna_nierownosc_1}
\end{equation}
\end{lemat}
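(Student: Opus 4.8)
The plan is to reproduce, with the parameter $k$ in place of $l$, the energy argument from the proof of Lemma \ref{lm:8}, now comparing the $k$-indexed sequences with the final limits $\vc{u},\ten{\varepsilon}^{\bf p},\ten{T},\ten{\chi}$. I would keep the same cut-off $\psi_{\mu,t_2}$ from \eqref{psi-mu}. The starting point is the energy identity already available for the $k$-level solutions: since the weak limit was identified as $\ten{\chi}_k=\ten{G}(\tilde{\theta}+\theta_k,\tilde{\ten{T}}^d+\ten{T}^d_k)$ (see \eqref{82}), equation \eqref{gran} reads
\begin{equation}
\mathcal{E}(\ten{\varepsilon}(\vc{u}_{k}(\tau)),\ten{\varepsilon}^{\bf p}_{k}(\tau))-\mathcal{E}(\ten{\varepsilon}(\vc{u}_{k}(0)),\ten{\varepsilon}^{\bf p}_{k}(0))=-\int_{0}^{\tau}\int_{\Omega}\ten{G}(\tilde{\theta}+\theta_{k},\tilde{\ten{T}}^d+\ten{T}^d_{k}):\ten{T}^d_{k}\dxdt.
\nonumber
\end{equation}
Multiplying by $\tfrac{1}{\mu}$, integrating over $(t_2,t_2+\mu)$ and invoking the Fubini computation \eqref{71}--\eqref{psi} rewrites the right-hand side as a $\psi_{\mu,t_2}$-weighted integral, exactly as in \eqref{mu2}.

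First I would pass to the limit $k\to\infty$ in this weighted identity. On the left-hand side the initial energy $\mathcal{E}(\ten{\varepsilon}(\vc{u}_{k}(0)),\ten{\varepsilon}^{\bf p}_{k}(0))$ converges to $\mathcal{E}(\ten{\varepsilon}(\vc{u}(0)),\ten{\varepsilon}^{\bf p}(0))$ (the approximate initial data converge), while $\tfrac{1}{\mu}\int_{t_2}^{t_2+\mu}\mathcal{E}(\ten{\varepsilon}(\vc{u}_{k}),\ten{\varepsilon}^{\bf p}_{k})\dtau$ is bounded below through the weak lower semicontinuity of the convex functional $\ten{T}\mapsto\tfrac{1}{2}\int\ten{D}^{-1}\ten{T}:\ten{T}$ on $L^2(0,T,L^2(\Omega,\mathcal{S}^3))$, using $\ten{T}_k\rightharpoonup\ten{T}$. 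Taking $\limsup_{k\to\infty}$ therefore yields a $\limsup$ bound from above for the weighted dissipation integral, just as \eqref{mu4} produced \eqref{jedna_nierownosc} in the previous passage.

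Next I would derive the counterpart of \eqref{gran} for the limits $\vc{u},\ten{\varepsilon}^{\bf p}$ by the mollification procedure of \eqref{drugie_r2}--\eqref{gran}: I would test the momentum equation \eqref{limit1a} with a double time-mollification of $\vc{u}$ and the flow rule \eqref{65a} with the time-mollification of $\ten{T}^d$ (admissible since $\ten{T}^d\in L^p$, and $\ten{T}^d$ may be replaced by $\ten{T}$ because $\ten{A}:\ten{B}^d=\ten{A}:\ten{B}$ for traceless $\ten{A}$). The term $\int_\Omega\alpha\ten{I}:(\ten{\varepsilon}(\vc{u}))_t\dx=\alpha\int_\Omega\div(\vc{u})_t\dx$ vanishes after integration by parts because of the homogeneous Dirichlet condition on $\vc{u}$. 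Subtracting the two identities, letting the mollification parameter $\epsilon\to0$ and using $\ten{\varepsilon}(\vc{u}),\ten{\varepsilon}^{\bf p}\in C_w([0,T],L^2(\Omega,\mathcal{S}^3))$ gives
\begin{equation}
\mathcal{E}(\ten{\varepsilon}(\vc{u}(\tau)),\ten{\varepsilon}^{\bf p}(\tau))-\mathcal{E}(\ten{\varepsilon}(\vc{u}(0)),\ten{\varepsilon}^{\bf p}(0))=-\int_{0}^{\tau}\int_{\Omega}\ten{\chi}:\ten{T}^d\dxdt.
\nonumber
\end{equation}
Weighting this by $\psi_{\mu,t_2}$ through \eqref{71}--\eqref{psi} and combining it with the first step produces the intermediate estimate $\limsup_{k\to\infty}\int_0^T\int_\Omega\ten{G}(\tilde{\theta}+\theta_{k},\tilde{\ten{T}}^d+\ten{T}^d_{k}):\ten{T}^d_{k}\,\psi_{\mu,t_2}\dxdt\le\int_0^T\int_\Omega\ten{\chi}:\ten{T}^d\,\psi_{\mu,t_2}\dxdt$.

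Finally, to remove the weight and reach \eqref{jedna_nierownosc_1} I would repeat the chain of inequalities closing the proof of Lemma \ref{lm:8}: write $\ten{T}^d_k=(\tilde{\ten{T}}^d+\ten{T}^d_k)-\tilde{\ten{T}}^d$, use the coercivity $\ten{G}(\tilde{\theta}+\theta_k,\tilde{\ten{T}}^d+\ten{T}^d_k):(\tilde{\ten{T}}^d+\ten{T}^d_k)\ge\beta|\tilde{\ten{T}}^d+\ten{T}^d_k|^p\ge0$ from Assumption \ref{ass_G} to enlarge the integration interval from $(0,t_2)$ to $(0,t_2+\mu)$ against the weight, apply the intermediate estimate to the full-stress term, and treat the cross term $\ten{G}(\tilde{\theta}+\theta_k,\tilde{\ten{T}}^d+\ten{T}^d_k):\tilde{\ten{T}}^d$ by the weak convergence $\ten{G}(\tilde{\theta}+\theta_k,\tilde{\ten{T}}^d+\ten{T}^d_k)\rightharpoonup\ten{\chi}$ tested against the fixed datum $\tilde{\ten{T}}^d$. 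Letting $\mu\to0$, the $\psi_{\mu,t_2}$-weighted integrals converge to the integrals over $(0,t_2)$ and the residual cross term over $(t_2,t_2+\mu)$ vanishes, which yields the claim. The hard part will be exactly this transfer: one cannot pass to the limit directly in the product $\ten{G}(\tilde{\theta}+\theta_k,\tilde{\ten{T}}^d+\ten{T}^d_k):\ten{T}^d_k$ of two only weakly convergent sequences, so the whole detour through the energy balance, the lower-semicontinuity estimate and the $\psi_{\mu,t_2}$-cut-off is indispensable, the delicate point being to keep the $\tilde{\ten{T}}^d$ cross term under control uniformly in $\mu$ (relying on the uniform $L^{p'}$ bound on $\ten{G}$ from Remark \ref{wsp_ogr_T}) while coercivity is available only for the full deviatoric stress.
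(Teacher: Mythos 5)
Your steps one, two and four coincide with the paper's argument: the $k$-level energy identity \eqref{gran} (equivalently \eqref{do-l}) weighted by $\psi_{\mu,t_2}$, the lower-semicontinuity bound \eqref{mu4a} via $\ten{T}_k\rightharpoonup\ten{T}$, and the closing chain of inequalities that controls the cross term with $\tilde{\ten{T}}^d$ before letting $\mu\to0$. The gap is in your third step --- the energy identity for the limit functions --- and it is exactly the point where this lemma ceases to be a copy of Lemma \ref{lm:8}. You propose to test \eqref{limit1a} with a double time-mollification of $\vc{u}$. For $p>2$ this test function is inadmissible: \eqref{limit1a} pairs the gradient of the test function with $\ten{T}-\alpha\ten{I}$, and the full stress $\ten{T}$ is only in $L^2(0,T,L^2(\Omega,\mathcal{S}^3))$ (only its deviatoric part enjoys the $L^p$ bound), so test gradients must be spatially $L^2$; but $\nabla\vc{u}$ --- hence also $\nabla\vc{u}*(\eta_\epsilon)'$, since mollification in time does nothing for spatial integrability --- lies only in $L^{p'}(\Omega)$ with $p'=p/(p-1)<2$, and H\"older would require $1/2+1/p'\le 1$, i.e.\ $p\le 2$. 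The same defect afflicts $(\vc{u})_t\in L^{p'}(0,T,W^{1,p'}_0(\Omega,\mathbb{R}^3))$; this is precisely what the paper means when it says that $(\vc{u})_t$ ``is not regular enough'' to serve as a test function. The analogous move was legitimate in the $l\to\infty$ passage only because the $l$-limit $\vc{u}_k$ is still a finite combination of the basis functions $\vc{w}_n\in H^3(\Omega,\mathbb{R}^3)$, hence spatially smooth; the $k$-limit $\vc{u}$ has no such property.

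The paper's proof avoids this by mixed testing: \eqref{limit1a} is tested with $((\ten{\varepsilon}(\vc{u}_k)*\eta_\epsilon)_t\mathbf{1}_{(t_1,t_2)})*\eta_{\epsilon}$, built from the spatially regular approximation $\vc{u}_k$, while the $k$-level flow rule \eqref{65} is tested with the mollified \emph{limit} deviatoric stress $\ten{T}^d\in L^p(0,T,L^p(\Omega,\mathcal{S}^3_d))$; subtraction produces the well-defined pairing $\ten{T}*\eta_\epsilon:(\ten{\varepsilon}(\vc{u}_k)-\ten{\varepsilon}^{\bf p}_k)_t*\eta_\epsilon$, and crucially one passes $k\to\infty$ \emph{first}, at fixed $\epsilon$ --- this works because the time derivative lands on the kernel, so $(\ten{\varepsilon}(\vc{u}_k)-\ten{\varepsilon}^{\bf p}_k)_t*\eta_\epsilon=(\ten{D}^{-1}\ten{T}_k)*(\eta_\epsilon)'$ is uniformly bounded in $L^2(0,T,L^2(\Omega,\mathcal{S}^3))$ --- and only afterwards $\epsilon\to0$ and $t_1\to0$, which yields \eqref{granica_k}. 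Unless you insert this detour (or an equivalent substitute), your argument proves the lemma only in the borderline case $p=2$.
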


\begin{proof}
Due to \eqref{82} we can repeat argumentation from the beginning of Lemma \ref{lm:8} proof and we obtain
\begin{equation}\label{do-l}
\frac{d}{dt} \mathcal{E}(\ten{\varepsilon}(\vc{u}_{k}) , \ten{\varepsilon}^{\bf p}_{k}) 
 = 
-
\int_{\Omega}\ten{G}(\tilde{\theta} + \theta_{k},\tilde{\ten{T}}^d + \ten{T}^d_{k}):\ten{T}^d_{k}\dx.
\end{equation}
We multiply the above identity by $\psi_{\mu,t_2}$ given by formula \eqref{psi-mu} and integrate over $(0,T)$. Passing to  the limit $k\to\infty$ we proceed in the same manner as in the proof of Lemma~\ref{lm:8} and obtain
\begin{equation}\label{mu4a}
\begin{split}
\liminf\limits_{k\to\infty}\int_{0}^{T}
\frac{d}{d\tau}& \mathcal{E}(\ten{\varepsilon}(\vc{u}_{k}) , \ten{\varepsilon}^{\bf p}_{k}) \,\psi_{\mu,t_2}\dt\\
&=\liminf\limits_{k\to\infty}\frac{1}{\mu}\int_{t_2}^{t_2+\mu}\mathcal{E}(\ten{\varepsilon}(\vc{u}_{k}) , \ten{\varepsilon}^{\bf p}_{k}) \dt-
\lim\limits_{k\to\infty}\mathcal{E}(\ten{\varepsilon}(\vc{u}_{k}(0)) , \ten{\varepsilon}^{\bf p}_{k}(0))\\
&\ge \frac{1}{\mu}\int_{t_2}^{t_2+\mu}\mathcal{E}(\ten{\varepsilon}(\vc{u}_{k}(t)) , \ten{\varepsilon}^{\bf p}_{k}(t)) \dt-
\mathcal{E}(\ten{\varepsilon}(\vc{u}(0)) , \ten{\varepsilon}^{\bf p}(0)).
\end{split}
\end{equation}

For the final step of this proof we need to show that the energy equality holds. Contrary to the case of previous section, we cannot use the time derivative of the limit, namely $(\vc{u})_t$ as the test function since is not regular enough. Although we can use a time derivative of approximate solution, i.e. $(\vc{u}_k)_t$. Moreover, due to the fact that $(\ten{\varepsilon}(\vc{u}_k) - \ten{\varepsilon}^{\bf p}_k)_t$ has worse regularity than $\ten{\varepsilon}(\vc{u}_k) - \ten{\varepsilon}^{\bf p}_k$ we will mollify it with respect to time. For $0<\epsilon < \min(t_1, T-t_2)$ let us take $\vc{\varphi}=((\ten{\varepsilon}(\vc{u}_k)*\eta_\epsilon)_t\mathbf{1}_{(t_1,t_2)})*\eta_{\epsilon}$ as test function in \eqref{limit1a}. Here, $\eta_\epsilon$ is a standard mollifier and we mollify with respect to time. Then
\begin{equation}
\int_{t_1}^{t_2} \int_{\Omega}(\ten{D}(\ten{\varepsilon}(\vc{u}) - \ten{\varepsilon}^{\bf p})-\alpha\ten{I})*\eta_{\epsilon}: (\ten{\varepsilon}(\vc{u}_k)*\eta_{\epsilon})_t \dxdt =
0.
\label{pierwsze_r2}
\end{equation}
As previously, term with $\alpha\ten{I}$ is equal to zero. Then we test an approximate equation \eqref{65} by a test function $\ten{\psi}=(\ten{T}_k^d*\eta_{\epsilon}\mathbf{1}_{(t_1,t_2)})*\eta_{\epsilon}$. This provides to 
\begin{equation}
\int_{t_1}^{t_2}\int_{\Omega}(\ten{\varepsilon}^{\bf p}_{k}*\eta_{\epsilon})_t:\ten{T}*\eta_{\epsilon} \dxdt =
\int_{t_1}^{t_2}\int_{\Omega}\ten{G}(\tilde{\theta}+\theta_k,\tilde{\ten{T}}^d+\ten{T}^d_k)*\eta_{\epsilon}:\ten{T}*\eta_{\epsilon} \dxdt .
\label{drugie_r2a}
\end{equation}
Subtracting \eqref{drugie_r2a} from \eqref{pierwsze_r2} we get
\begin{equation}
\int_{t_1}^{t_2}\int_{\Omega}\ten{T}*\eta_{\epsilon}:(\ten{\varepsilon}(\vc{u}_k) - \ten{\varepsilon}^{\bf p}_k)_t*\eta_{\epsilon} \dxdt=
- 
\int_{t_1}^{t_2}\int_{\Omega}\ten{G}(\tilde{\theta}+\theta_k,\tilde{\ten{T}}^d + \ten{T}^d_k)*\eta_{\epsilon}:\ten{T}^d*\eta_{\epsilon} \dxdt .
\label{granica_k_ptrzed}
\end{equation}
For every $\epsilon>0$ the sequence $\{(\ten{\varepsilon}(\vc{u}_k) - \ten{\varepsilon}^{\bf p}_k)_t*\eta_{\epsilon}\}$ belongs to $L^2(0,T,L^2(\Omega,\mathcal{S}^3))$ and is uniformly bounded in $L^2(0,T,L^2(\Omega,\mathcal{S}^3))$, hence we pass to the limit with $k\rightarrow\infty$. Using the properties of convolution, passing to the limit with $\epsilon \to 0$ and then with $t_1 \to 0$ we obtain
\begin{equation}
\int_{\Omega}\ten{D}(\ten{\varepsilon}(\vc{u}) - \ten{\varepsilon}^{\bf p}):(\ten{\varepsilon}(\vc{u}) - \ten{\varepsilon}^{\bf p}) \dx \Big|_{0}^{t_2}=
- 
\int_{0}^{t_2}\int_{\Omega}\ten{\chi}:\ten{T}^d \dxdt .
\label{granica_k}
\end{equation}
We multiply \eqref{granica_k} by $\frac{1}{\mu}$ and integrate over $(t_2,t_2+\mu)$ and proceed now in the same manner as in the proof of Lemma~\ref{lm:8}.
\end{proof}

Using the Minty-Browder trick to identify the weak limit $\ten{\chi}$ and the same argumentation as in the previous limit passage, we obtain that $\ten{\chi} =\ten{G}(\tilde{\theta} + \theta,\tilde{\ten{T}}^d +\ten{T}^d)$ a.a. in $\Omega\times (0,T)$ and 
\begin{equation}
\ten{G}(\tilde{\theta} + \theta_k,\tilde{\ten{T}}^d + \ten{T}_k^d):(\tilde{\ten{T}}^d + \ten{T}^d_{k})\rightharpoonup \ten{G}(\tilde{\theta} + \theta,\tilde{\ten{T}^d} +\ten{T}^d):(\tilde{\ten{T}}^d + \ten{T}^d) 
\quad\mbox{ in }L^1(0,T,L^1(\Omega)).
\end{equation}
Furthermore,
\begin{equation}
\mathcal{T}_k\Big(\ten{G}(\tilde{\theta} + \theta_k,\tilde{\ten{T}}^d + \ten{T}_k^d):(\tilde{\ten{T}}^d + \ten{T}^d_{k})\Big)\rightharpoonup \ten{G}(\tilde{\theta} + \theta,\tilde{\ten{T}}^d +\ten{T}^d):(\tilde{\ten{T}}^d + \ten{T}^d),
\end{equation}
in $L^1(0,T,L^1(\Omega))$. Then passing to the limit with $k\to \infty$ in \eqref{eq:after_limit_l_2} we obtain
\begin{equation}
\begin{split}
-\int_0^T\int_{\Omega} &\theta\phi_t \dxdt -
\int_{\Omega} \theta(x,0)\phi(x,0) \dx    + 
\int_0^T\int_{\Omega}  \nabla\theta\cdot\nabla\phi  \dxdt  
\\
&+ \int_Q \alpha\mbox{div}(\vc{u})_t \phi\dxdt = 
\int_0^T\int_{\Omega} (\tilde{\ten{T}}^d + \ten{T}^d):\ten{G}(\tilde{\theta} + \theta,\tilde{\ten{T}} + \ten{T}^d)\phi \dxdt,
\\
\label{eq:after_limit_l_2a}
\end{split}
\end{equation}
for all $\phi\in C^\infty([0,T]\times\Omega)$ which completes the second limit passage and as a consequence it finishes the proof of Theorem \ref{thm:main}.

%
%
%
%

\begin{appendix}
\section{Transformation into homogeneous boundary value problem}
\label{sec:trans}

The aim of this section is to reduce full boundary problem into homogeneous one. Let us define two additional systems of equations
\begin{equation}
\left\{
\begin{array}{rcll}
-\div \tilde{\ten{T}} &=& \vc{f} & \mbox{in } \Omega\times (0,T), \\
\tilde{\ten{T}} &=& \ten{D}\ten{\varepsilon}(\tilde{\vc{u}}) & \mbox{in } \Omega\times (0,T), \\
\tilde{\vc{u}} &=& \vc{g} & \mbox{on } \partial\Omega\times (0,T), 
\end{array}
\right.
\label{war_brz_u}
\end{equation}
and
\begin{equation}
\left\{
\begin{array}{rcll}
\tilde{\theta}_t -\Delta \tilde{\theta} +\alpha \div \tilde{\vc{u}}_t&=& 0 & \mbox{in } \Omega\times (0,T), \\
\frac{\partial\tilde{\theta}}{\partial\vc{n}} &=& g_{\theta} & \mbox{on } \partial\Omega\times (0,T), \\
\tilde{\theta}(x,0) &=& \tilde{\theta}_0 & \mbox{in } \Omega,
\end{array}
\right.
\label{war_brz_t}
\end{equation}
where $\vc{f}$ is a given volume force, $\vc{g}$ and $g_\theta$ are given boundary values for displacement and thermal flux, respectively. It may be understand as follows: system \eqref{war_brz_u} is subject to the same external forces as problem \eqref{full_system}. Since \eqref{war_brz_u} describes elastic deformation, no mechanical energy is transformed into thermal one. That is the reason why right-hand side of \eqref{war_brz_t}$_{(1)}$ is equal to zero. \eqref{war_brz_u}--\eqref{war_brz_t} are complemented with the same boundary conditions as \eqref{full_system} and $\tilde{\theta}_0\in L^2(\Omega)$ is arbitrary function. Moreover, one should remember that inelastic deformation in \eqref{full_system} is defined by Norton-Hoff-type constitutive function (i.e. it satisfies $p$-growth condition with respect to second variable) with $p\geq 2$.

\begin{lemat}
Let $\tilde{\theta}_0 \in L^2(\Omega)$, $\vc{g} \in W^{1,p}(0,T, W^{1-\frac{1}{p},p}(\partial\Omega,\mathbb{R}^3))$, $g_{\theta} \in L^2(0,T,L^2(\partial\Omega))$ and moreover  $\vc{f}\in W^{1,p}(0,T,W^{-1,p}(\Omega,\mathbb{R}^3))$. Then there exists a solution to systems \eqref{war_brz_u} and \eqref{war_brz_t}. Additionally, the following estimates hold:
\begin{equation}
\begin{split}
\|\tilde{\vc{u}}\|_{W^{1,p}(0,T,W^{1,p}(\Omega))} 
& \leq 
C_1 \left(\|\vc{g}\|_{W^{1,p}(0,T, W^{1-\frac{1}{p},p}(\partial\Omega)}+ 
\|\vc{f}\|_{W^{1,p}(0,T,W^{-1,p}(\Omega))} \right),
 \\
\|\tilde{\theta}\|_{L^{\infty}(0,T,L^2(\Omega))}  + \|\tilde{\theta}\|_{L^2(0,T,W^{1,2}(\Omega))} 
& \leq 
C_2 \left(\|g_{\theta}\|_{L^2(0,T,L^2(\partial\Omega))}+\|\tilde{\theta}_0\|_{L^2(\Omega)} +\alpha\|\div \tilde{\vc{u}}_t\|_{L^p(0,T,L^p(\Omega))} \right). 
\nonumber
\end{split}
\end{equation}
\label{wyrzucenie_war_brzeg}
Moreover, $\theta$ belongs to $C([0,T],L^2(\Omega))$.
\end{lemat}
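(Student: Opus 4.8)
The plan is to exploit the triangular structure of the pair \eqref{war_brz_u}--\eqref{war_brz_t}: system \eqref{war_brz_u} does not involve $\tilde{\theta}$, so I would first solve the elasticity problem for $\tilde{\vc{u}}$, and only then feed $\div\tilde{\vc{u}}_t$ as a known source into the linear heat equation \eqref{war_brz_t}. In this way the existence proof splits into two classical linear problems.

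First I would treat \eqref{war_brz_u}. Eliminating $\tilde{\ten{T}}$, this is the stationary linear elliptic system $-\div(\ten{D}\ten{\varepsilon}(\tilde{\vc{u}}))=\vc{f}$ with Dirichlet datum $\vc{g}$, in which $t$ enters only as a parameter. Since $\ten{D}$ is symmetric, bounded and positive definite, Korn's inequality together with $L^p$-elliptic regularity for systems (using $\partial\Omega\in C^2$) yields, for a.a.\ $t$, a unique solution $\tilde{\vc{u}}(t)\in W^{1,p}(\Omega,\mathbb{R}^3)$ obeying the first claimed estimate pointwise in time. The solution operator $(\vc{f},\vc{g})\mapsto\tilde{\vc{u}}$ is linear and bounded from $W^{-1,p}(\Omega,\mathbb{R}^3)\times W^{1-\frac1p,p}(\partial\Omega,\mathbb{R}^3)$ into $W^{1,p}(\Omega,\mathbb{R}^3)$; since the data lie in $W^{1,p}(0,T;\cdot)$ and bounded linear operators commute with the Bochner time derivative, I obtain $\tilde{\vc{u}}\in W^{1,p}(0,T,W^{1,p}(\Omega,\mathbb{R}^3))$, with $\tilde{\vc{u}}_t$ solving the same elliptic system with data $\vc{f}_t,\vc{g}_t$. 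Integrating the pointwise estimate in time gives the first inequality of the lemma.

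Next I would solve \eqref{war_brz_t}. From the previous step $\div\tilde{\vc{u}}_t\in L^p(0,T,L^p(\Omega))$, and since $\Omega$ is bounded and $p\ge2$ this source also lies in $L^2(0,T,L^2(\Omega))$; together with $\tilde{\theta}_0\in L^2(\Omega)$ and $g_\theta\in L^2(0,T,L^2(\partial\Omega))$ this places \eqref{war_brz_t} in the setting of a standard linear heat equation with Neumann data and $L^2$ source, which I would construct by a Galerkin scheme in the eigenbasis of the Neumann Laplacian. The a priori bound comes from testing the equation by $\tilde{\theta}$ itself:
\begin{equation}
\frac12\frac{d}{dt}\|\tilde{\theta}\|_{L^2(\Omega)}^2 + \|\nabla\tilde{\theta}\|_{L^2(\Omega)}^2 = \int_{\partial\Omega} g_\theta\,\tilde{\theta}\dx - \alpha\int_\Omega \div(\tilde{\vc{u}}_t)\,\tilde{\theta}\dx .
\nonumber
\end{equation}
Estimating the boundary integral by the trace inequality and the source integral by Cauchy--Schwarz, then absorbing a small multiple of $\|\nabla\tilde{\theta}\|_{L^2(\Omega)}^2$ and of $\|\tilde{\theta}\|_{L^2(\Omega)}^2$ on the left and applying Gronwall's lemma, yields the uniform bound in $L^\infty(0,T,L^2(\Omega))\cap L^2(0,T,W^{1,2}(\Omega))$, which passes to the Galerkin limit and gives the second claimed estimate, the source being controlled through its $L^p$ norm exactly as stated.

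Finally, for the time continuity I would note that the bounds just obtained, together with the equation, give $\tilde{\theta}_t\in L^2(0,T,(W^{1,2}(\Omega))')$, since each term on the right-hand side of \eqref{war_brz_t} defines an element of that space (the Neumann datum through the trace operator). The standard embedding for functions with $\tilde{\theta}\in L^2(0,T,W^{1,2}(\Omega))$ and $\tilde{\theta}_t\in L^2(0,T,(W^{1,2}(\Omega))')$ then yields $\tilde{\theta}\in C([0,T],L^2(\Omega))$. I do not expect a serious obstacle here: the whole difficulty of the coupled problem \eqref{full_system} has been removed by the decoupling, and the only point genuinely requiring care is that the heat source $\div\tilde{\vc{u}}_t$ carries the regularity demanded by the parabolic estimate, which is precisely why the time-differentiability of the elliptic problem --- hence the $W^{1,p}$-in-time hypotheses on $\vc{f}$ and $\vc{g}$ --- has to be invoked.
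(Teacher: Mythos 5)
Your proposal is correct and follows essentially the same route as the paper: decouple the triangular system, solve the elasticity problem by $L^p$ elliptic regularity applied both to the equation and to its time-differentiated version (the paper does this by lifting $\vc{g}$ via Valent's trace theorem and applying his Corollary 4.4 to the resulting homogeneous problem, which is just a different bookkeeping of your bounded-solution-operator argument), and then treat \eqref{war_brz_t} as a linear heat equation with an $L^2$ source, using exactly the observation that $p\geq 2$ makes $\div\tilde{\vc{u}}_t\in L^2(0,T,L^2(\Omega))$. The energy estimate, Gronwall argument, and the $\tilde{\theta}_t\in L^2(0,T,(W^{1,2}(\Omega))')$ embedding giving $C([0,T],L^2(\Omega))$ are precisely the ``standard calculations'' the paper leaves implicit.
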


\begin{uwaga}
From the trace theorem \cite[Chapter II]{Valent} there exist $\tilde{\vc{g}}\in W^{1,p}(0,T,W^{1,p}(\Omega,\mathbb{R}^3))$ such that $\tilde{\vc{g}}|_{\partial\Omega}=\vc{g}$. Then, finding the solution $\tilde {\vc{u}}$ to \eqref{war_brz_u} is equivalent to finding the solution  $\tilde{\vc{u}}_1$ to the following problem 
\begin{equation}
\left\{
\begin{array}{rcll}
-{\div} \ten{D}\ten{\varepsilon}(\tilde{\vc{u}}_1) &=& \vc{f} +{\div} \ten{D}\ten{\varepsilon}(\vc{\tilde{g}}) & \mbox{in } \Omega\times (0,T), \\
\tilde{\vc{u}}_1 &=& 0 & \mbox{on } \partial\Omega\times (0,T),
\end{array}
\right.
\label{war_brz_u_0}
\end{equation}
and $\tilde{\vc{u}} = \tilde{\vc{u}}_1 + \tilde{\vc{g}}$. Using \cite[Corollary 4.4]{Valent} for \eqref{war_brz_u_0} and for time derivative of this equation, we obtain the estimates on $\tilde{\vc{u}}$ presented in Lemma \ref{wyrzucenie_war_brzeg}, whereas the estimates for $\tilde{\theta}$ are standard calculations (note that $p\geq 2$).
\end{uwaga}

Then, instead of finding  $(\widehat{\vc u}, \widehat{\theta})$ - the solution to problem \eqref{full_system}-\eqref{init_0}-\eqref{boun_0}, we shall search for $(\vc{u}, \theta)$, where $\vc{u}=\widehat{\vc{u}}-\tilde{\vc{u}}$, $\theta=\widehat{\theta}-\tilde{\theta}$ and $(\tilde{\vc{u}},\tilde{\theta})$ solve \eqref{war_brz_u} and \eqref{war_brz_t}. It means that we consider \begin{equation}
\left\{
\begin{split}
- {\div} \ten{\sigma} & =  0 ,
\\
\ten{T} & = \ten{D}(\ten{\varepsilon}(\vc{u}) - \ten{\varepsilon}^{\bf p} ),
\\
\ten{\sigma} & = \ten{T}-\alpha\ten{I},
\\
\ten{\varepsilon}^{\bf p}_t & =  \ten{G}(\tilde{\theta} + \theta ,  \tilde{\ten{T}}^d + \ten{T}^d),
\\
\theta_t - \Delta \theta + \alpha \div (\vc{u}_t)& =   \big(\tilde{\ten{T}}^d + \ten{T}^d\big):\ten{G}(\tilde{\theta} + \theta , \tilde{\ten{T}}^d + \ten{T}^d),
\end{split}
\right.
\label{full_system_22}
\end{equation}
with initial and boundary conditions
\begin{equation}
\left\{	
\begin{array}{rcll}
\theta(\cdot,0) &=& \widehat{\theta}_0 - \tilde{\theta}_0 \equiv \theta_0 & \mbox{in } \Omega, \\
\ten{\varepsilon}^{\bf p}(\cdot,0) &=& \ten{\varepsilon}^{\bf p}_0 & \mbox{in } \Omega,
\\
\vc{u} &=& 0 & \mbox{on } \partial\Omega\times (0,T), \\
\frac{\partial\theta}{\partial \vc{n}} &=& 0 & \mbox{on } \partial\Omega\times (0,T),
\end{array}
\right.
\label{in_bou_condA5}
\end{equation}
where $\widehat{\theta}_0$ is given initial condition for the temperature and $\tilde{\theta}_0$ is initial condition for the system \eqref{war_brz_t}.

%
%
%
%

\section{Construction of approximate solution}
\label{sec:constr}

Construction of approximate solutions is done in the same way as in \cite{1}. There are no issues with bases for temperature and displacement. Special attention is required in the construction of basis for visco-elastic strain tensor $\ten{\varepsilon}^{\bf p}$. For more details we refer the reader to \cite[Appendix B]{1}. Here, we briefly summarized the results presented there. Let $k\in{\mathbb N}$ and $\mathcal{T}_k(\cdot)$ be a standard truncation operator
\begin{equation}
\mathcal{T}_k(x)=\left\{
\begin{split}
k \qquad & x> k \\
x \qquad & |x|\leq k \\
-k \qquad & x <-k.
\end{split}
\right.
\label{Tk}
\end{equation}

Since the right-hand side and initial condition of heat equation are only the integrable function, we use two level approximation, i.e. independent parameters of approximation in the displacement and temperature. Further, approximate solution will be denoted by index $(k,l)$, where $k$ corresponds to range of Galerkin approximation of displacement and $l$ corresponds to range of Galerkin approximation of temperature. This allows us to make limit passages independent for both approximations.

Now, we construct basis for approximate solutions for displacement. Let us consider the space $L^2(\Omega,\mathcal{S}^3)$ with a scalar product defined
\begin{equation}
(\ten{\xi},\ten{\eta})_{\ten{D}}:=  \int_\Omega {\ten{D}}^\frac{1}{2}\ten{\xi}: {\ten{D}}^\frac{1}{2}\ten{\eta} \dx 
\quad\mbox{for }\ten{\xi},\ten{\eta}\in L^2(\Omega,\mathcal{S}^3)
 \end{equation}
 where ${\ten{D}}^\frac{1}{2}\circ{\ten{D}}^\frac{1}{2}=\ten{D}$. 
Let $\{\vc{w}_i\}_{i=1}^{\infty}$ be the set of eigenfunctions of the operator $-\div\ten{D}\ten{\varepsilon}(\cdot)$ with the domain $W_0^{1,2}(\Omega,\mathbb{R}^3)$ and  $\{ \lambda_i \}$ be the corresponding eigenvalues such that $\{\vc{w}_i\}$ is orthonormal in $W^{1,2}_0(\Omega,\mathbb{R}^3)$ with the inner product
\begin{equation}
( \vc{w}, \vc{v})_{W^{1,2}_0(\Omega)}=( \ten{\varepsilon}(\vc{w}), \ten{\varepsilon}(\vc{v}))_{\ten{D}}
\end{equation}
and orthogonal in $L^2(\Omega,\mathbb{R}^3)$. Since we assume that each of the function $d_{i,j,k,l}$ is constant and boundary of the domain is $C^2$, we know that the basis $\{\ten{w}_i\}$ consists of functions which belong to $H^{3}(\Omega, \mathbb{R}^3)$, see \cite{Brezis}. Let us denote 
\begin{equation}
\|\ten{\varepsilon}(\vc{w})\|_{\ten{D}}:=\sqrt{( \ten{\varepsilon}(\vc{w}), \ten{\varepsilon}(\vc{w}))_{\ten{D}}}.
\end{equation}
Using the eigenvalue problem for the operator $-\div\ten{D}\ten{\varepsilon}(\cdot)$ we obtain
\begin{equation}
\int_{\Omega}\ten{D}\ten{\varepsilon}(\vc{w}_i):\ten{\varepsilon}(\vc{w}_j) \dx = \lambda_i \int_{\Omega}\vc{w}_i\cdot\vc{w}_j \dx = 0.
\end{equation} 
Furthermore, let $\{v_i\}_{i=1}^\infty$ be the subset of $W^{1,2}(\Omega)$ such that
\begin{equation}
\int_{\Omega} (\nabla v_i \cdot\nabla\phi - \mu_i v_i\phi)\dx =0,
\end{equation}
holds for every function $\phi\in C^{\infty}(\overline{\Omega})$, see \cite{Alt,strauss}. 
We may assume that $\{v_i\}$ is orthonormal in $W^{1,2}(\Omega)$ and orthogonal in $L^2(\Omega)$. Let $\{\mu _i \}$ be the set of corresponding eigenvalues. The set $\{\vc{w}_i\}$ is used to construct approximate solutions of displacement, whereas set $\{v_i\}$ is used to construct approximate solutions of temperature. What remains, is to construct basis for visco-elastic strain tensor.

Let us consider the symmetric gradients of first $k$ functions from the basis $\{\ten{w}_i\}_{i=1}^{\infty}$. Due to the regularity of eigenfunctions we observe that $\ten{\varepsilon}(\ten{w}_i)$ are elements of $H^s(\Omega,\mathcal{S}^3)$, that is fractional Sobolev space with a scalar product denoted by $\braket{\cdot,\cdot}_s$ for $\frac{3}{2}<s\le 2$. We define space 
\begin{equation}\label{Vk}
V_k:= (\mbox{span}\{\ten{\varepsilon}(\ten{w}_1),...,\ten{\varepsilon}(\ten{w}_k)\})^\bot,
\end{equation}
which is the orthogonal complement in $L^2(\Omega,\mathcal{S}^3)$  taken with respect to the scalar product $(\cdot,\cdot)_{\ten{D}}$. Then let us introduce the space
\begin{equation}\label{Vks}
V_k^s:=V_k\cap H^s(\Omega,\mathcal{S}^3).
\end{equation}
Since the co-dimension of $V_k^s$ is finite, then $V_k^s$ is closed in $H^s(\Omega,\mathcal{S}^3)$ with respect to the  $\|\cdot\|_{H^s}-$norm. Using \cite[Theorem B.1]{1} we construct the orthogonal basis of $V_k$,
which is also an orthonormal basis of $V_k^s$. We denote this basis by $\{\ten{\zeta}^k_n\}_{n=1}^{\infty}$. 

For $k,l\in\mathbb{N}$, we are ready to define approximate solution
\begin{equation}
\begin{split}
\vc{u}_{k,l} & = \sum_{n=1}^k\alpha_{k,l}^n(t) \vc{w}_n,
 \\
\theta_{k,l} & = \sum_{m=1}^l\beta_{k,l}^m(t) v_m,
 \\
\ten{\varepsilon}^{\bf p}_{k,l} & = \sum_{n=1}^k\gamma_{k,l}^n(t) \ten{\varepsilon}(\vc{w}_n) + 
\sum_{m=1}^l\delta_{k,l}^m(t) \ten{\zeta}_m^k.
\end{split}
\label{eq:postac}
\end{equation}

%
%
%
%

\section{Proof of Lemma \ref{wsp_org_epa}}
\label{sec:Lemma}
The following proof comes from \cite{1}.
\begin{proof}{}

Recall that $\{\ten{\zeta}_n^k\}_{n=1}^\infty$ is an orthonormal basis of $V_k^s$ and orthogonal basis of $V_s$, where those spaces are defined by \eqref{Vks} and \eqref{Vk}, respectively. We define the following projections:
\begin{equation}
\begin{split}
P^l_{H^s}: H^s &\to \mbox{lin}\{\ten{\zeta}^k_1,...,\ten{\zeta}^k_l\}, \quad P^l_{H^s} \ten{v}:=\sum_{i=1}^{l} \braket{ \ten{v},\frac{\ten{\zeta}_i^k}{\sqrt{\lambda_i}}}_s\frac{\ten{\zeta}_i^k}{\sqrt{\lambda_i}} \\
P^l_{L^2}: L^2 &\to \mbox{lin}\{\ten{\zeta}^k_1,...,\ten{\zeta}^k_l\}, \quad P^l_{L^2}\ten{v}:=\sum_{i=1}^{l}(\ten{v},\ten{\zeta}_i^k)_{\ten{D}}\ten{\zeta}_i^k.
\end{split}
\end{equation}
Then, we observe that
\begin{equation}
P^l_{L^2}{\Big|_{V^s_k}} = P^l_{H^s}{\Big|_{V^s_k}}.
\end{equation}
Indeed, if $\ten{\varphi}\in V_k^s$ then
\begin{equation}
P^l_{L^2}\ten{\varphi}=\sum_{i=1}^{l}(\ten{\varphi},\ten{\zeta}_i^k)_{\ten{D}}\ten{\zeta}_i^k =
\sum_{i=1}^{l}\braket{\ten{\varphi},\frac{\ten{\zeta}_i^k}{\sqrt{\lambda_i}}}_s\frac{\ten{\zeta}_i^k}{\sqrt{\lambda_i}}=
P^l_{H^s}\ten{\varphi},
\end{equation}
where the second equality is condition for eigenvalues.  Moreover, the norms $\|P^l_{H^s}\|_{\mathcal{L}(H^s)}$ and $\|P^l_{L^2}\|_{\mathcal{L}(L^2)}$ are equal to $1$. 

Let us define the projection
\begin{equation}
P^k: L^2 \to \mbox{lin}\{\ten{\varepsilon}(\vc{w}_1),...,\ten{\varepsilon}(\vc{w}_k)\}, \quad P^k\ten{v}:=\sum_{i=1}^{k}(\ten{v},\ten{\varepsilon}(\vc{w}_i))_{\ten{D}}\ten{\varepsilon}(\vc{w}_i).
\end{equation}
Our goal is to obtain the estimates independent of $l$. Since $P^k$ is the projection which does not dependent on $l$, then there exists $c(k)$ (depending only on $k$) such that for every $\ten{\varphi}\in\mathcal{S}^3$ it holds
\begin{equation}
\max(\|P^k\ten{\varphi}\|_{H^s}, \|(Id - P^k)\ten{\varphi}\|_{H^s})\le c(k)\|\ten{\varphi}\|_{H^s}.
\end{equation}
Thus, we may observe that
\begin{equation}
P^l_{H^s}(Id - P^k)\ten{v} =
\sum_{i=1}^{l}\braket{(Id - P^k)\ten{v},\frac{\ten{\zeta}_i^k}{\sqrt{\lambda_i}}}_s\frac{\ten{\zeta}_i^k}{\sqrt{\lambda_i}}
=
\sum_{i=1}^{l}((Id - P^k)\ten{v},\ten{\zeta}_i)_{\ten{D}}\ten{\zeta}_i^k
=
\sum_{i=1}^{l}(\ten{v},\ten{\zeta}_i)_{\ten{D}}\ten{\zeta}_i^k
=
P^l_{L^2}\ten{v}.
\end{equation}

Notice that by \eqref{eq:postac}$_{(3)}$ we have $(P^k + P^l_{L^2})(\ten{\varepsilon}^{\bf p}_{k,l})_t=(\ten{\varepsilon}^{\bf p}_{k,l})_t$.
 Let $\ten{\varphi}\in L^p(0,T,H^{s}(\Omega,\mathcal{S}^3))$, then we may estimate as follows
\begin{equation}
\begin{split}
\int_0^T |( (\ten{\varepsilon}^{\bf p}_{k,l})_t, \ten{\varphi} )_{\ten{D}} |\dt &=
\int_0^T |( (P^k + P^l_{L^2})(\ten{\varepsilon}^{\bf p}_{k,l})_t, \ten{\varphi})_{\ten{D}} |\dt
\\
&=
\int_0^T |( (\ten{\varepsilon}^{\bf p}_{k,l})_t, (P^k + P^l_{L^2})\ten{\varphi})_{\ten{D}} |\dt
\\ &
\le\int_0^T |( (\ten{\varepsilon}^{\bf p}_{k,l})_t, P^k\ten{\varphi})_{\ten{D}} |\dt
+\int_0^T |( (\ten{\varepsilon}^{\bf p}_{k,l})_t, P^l_{L^2}\ten{\varphi})_{\ten{D}} |\dt ,
\end{split}
\end{equation}
where the second equality holds because the projections are self-adjoint operators and the inequality is a consequence of the orthogonality of subspaces $\mbox{lin}\{\ten{\varepsilon}(\vc{w}_1),\ldots,\ten{\varepsilon}(\vc{w}_k)\}$ and $\mbox{lin}\{\ten{\zeta}_1^k,\ldots, \ten{\zeta}_l^k\}$ in the sense of $(\cdot,\cdot)_{\ten{D}}$. Thus,
\begin{equation}
\begin{split}
\int_0^T |( (\ten{\varepsilon}^{\bf p}_{k,l})_t, \varphi )_{\ten{D}} |\dt &\le
\int_0^T |\int_\Omega
\ten{D}\ten{G}(\theta_{k,l}+\tilde{\theta},\ten{T}_{k,l}^d +\tilde{\ten{T}}^d) P^k\ten{\varphi}\dx |\dt
\\ &
\quad +
\int_0^T |\int_\Omega
\ten{D}\ten{G}(\theta_{k,l}+\tilde{\theta},\ten{T}_{k,l}^d +\tilde{\ten{T}}^d) P^l_{L^2}\ten{\varphi}\dx |\dt
\\ &\le 
\int_0^T |\int_\Omega
\ten{D}\ten{G}(\theta_{k,l}+\tilde{\theta},\ten{T}_{k,l}^d +\tilde{\ten{T}}^d) P^k\ten{\varphi}\dx |\dt
\\ &
\quad +
\int_0^T |\int_\Omega
\ten{D}\ten{G}(\theta_{k,l}+\tilde{\theta},\ten{T}_{k,l}^d +\tilde{\ten{T}}^d) (P^l_{H^s}\circ(Id - P^k))\ten{\varphi}\dx |\dt.
\end{split}
\label{eq:drugi_wyraz}
\end{equation}
The estimates of this first term on right hand side of abovementioned inequality are obvious
\begin{equation}
\begin{split}
\int_0^T|\int_{\Omega}\ten{D}\ten{G}(\theta_{k,l}+\tilde{\theta},\ten{T}_{k,l}^d +\tilde{\ten{T}}^d)
& P^k\ten{\varphi} |\dt 
\\
& \leq d\int_0^T\|\ten{G}(\theta_{k,l}+\tilde{\theta},\ten{T}_{k,l}^d +\tilde{\ten{T}}^d)\|_{L^{p'}(\Omega)}
\|P^k\ten{\varphi}\|_{L^{p}(\Omega)}\dt 
\\
& \le  \tilde c\int_0^T\|\ten{G}(\theta_{k,l}+\tilde{\theta},\ten{T}_{k,l}^d +\tilde{\ten{T}}^d)\|_{L^{p'}(\Omega)}
\|P^k\ten{\varphi}\|_{H^{s}(\Omega)}\dt
\\ &
\le c(k)\tilde c\int_0^T\|\ten{G}(\theta_{k,l}+\tilde{\theta},\ten{T}_{k,l}^d +\tilde{\ten{T}}^d)\|_{L^{p'}(\Omega)}
\|\ten{\varphi}\|_{H^{s}(\Omega)}\dt
\\
&\le c(k)\tilde c\|\ten{G}(\theta_{k,l}+\tilde{\theta},\ten{T}_{k,l}^d +\tilde{\ten{T}}^d)\|_{L^{p'}(0,T,L^{p'}(\Omega))}
\|\ten{\varphi}\|_{L^p(0,T,H^{s}(\Omega))},
\end{split}
\end{equation}
where  $\tilde c$ is an optimal embedding  constant of $H^s(\Omega,\mathcal{S}^3)\subset L^p(\Omega,\mathcal{S}^3)$. Now, let us focus on the second term from \eqref{eq:drugi_wyraz}. We obtain
\begin{equation}
\begin{split}
\int_0^T |\int_\Omega
\ten{D}\ten{G}(\theta_{k,l}+\tilde{\theta},\ten{T}_{k,l}^d +\tilde{\ten{T}}^d) &(P^l_{H^s}\circ(Id - P^k))\ten{\varphi}\dx |\dt  \\
& \leq 
d\int_0^T \|
\ten{G}(\theta_{k,l}+\tilde{\theta},\ten{T}_{k,l}^d +\tilde{\ten{T}}^d)\|_{L^{p'}(\Omega)}
\|(P^l_{H^s}\circ(Id - P^k))\ten{\varphi}\|_{L^{p}(\Omega)}\dt 
\\
& \leq \tilde c
\int_0^T \|
\ten{G}(\theta_{k,l}+\tilde{\theta},\ten{T}_{k,l}^d +\tilde{\ten{T}}^d)\|_{L^{p'}(\Omega)}
\|(P^l_{H^s}\circ(Id - P^k))\ten{\varphi}\|_{H^{s}(\Omega)}\dt 
\\
& \leq \tilde c
\int_0^T \|
\ten{G}(\theta_{k,l}+\tilde{\theta},\ten{T}_{k,l}^d +\tilde{\ten{T}}^d)\|_{L^{p'}(\Omega)}
\|(Id - P^k)\ten{\varphi}\|_{H^{s}(\Omega)}\dt 
\\
& \leq \tilde c c(k)
\int_0^T \|
\ten{G}(\theta_{k,l}+\tilde{\theta},\ten{T}_{k,l}^d +\tilde{\ten{T}}^d)\|_{L^{p'}(\Omega)}
\|\ten{\varphi}\|_{H^{s}(\Omega)}\dt 
\\
&
\le \tilde c c(k)\|\ten{G}(\theta_{k,l}+\tilde{\theta},\ten{T}_{k,l}^d +\tilde{\ten{T}}^d)\|_{L^{p'}(0,T,L^{p'}(\Omega))}
\|\ten{\varphi}\|_{L^p(0,T,H^{s}(\Omega))}.
\end{split}
\end{equation}
Consequently, there exists $C(k)>0$ such that 
\begin{equation}
\sup_{\ten{\varphi}\in L^p(0,T,H^{s}(\Omega))\atop
 \|\ten{\varphi}\|_{L^p(0,T,H^{s}(\Omega))}\le 1 }\int_0^T
|( (\ten{\varepsilon}^{\bf p}_{k,l})_t, \ten{\varphi})_{\ten{D}} |\dt\le C(k),
\end{equation}
and hence sequence $\{(\ten{\varepsilon}^{\bf p}_{k,l})_t\}$ is uniformly bounded in 
${L^{p'}(0,T,(H^{s}(\Omega,\mathcal{S}^3))')}$ with respect to~$l$.

\end{proof}

%
%
%
%

\section{Solution to heat equations}
\label{sec:BG}
Let us focus on the following problem 
\begin{equation}
\left\{
\begin{array}{ll}
(\theta)_t -\Delta \theta = f & \mbox{in } \Omega\times (0,T),\\
\frac{\partial \theta}{\partial \vc{n}} & \mbox{on } \partial\Omega\times (0,T),\\
\theta(\cdot,0) = \theta_0 & \mbox{on } \Omega,
\end{array}
\right.
\label{eq:heatB}
\end{equation}
where $f$ belongs to $L^1(\Omega\times (0,T))$ and $\theta_0$ belongs to $L^1(\Omega)$. Now, let us define the approximate system of equations in the following way: for every $k \in\mathbb{N}$ let us take function $f_k$ which belongs to $L^2(\Omega\times (0,T))$ and the sequence $\{f_k\}$ is uniformly bounded in $L^1(\Omega\times (0,T))$. Additionally, $\mathcal{T}_k (\theta_0 )\in L^2(\Omega)$, $\|\mathcal{T}_k(\theta_0)\|_{L^1(\Omega)}\leq \|\theta_0\|_{ L^1(\Omega)}$ and $\mathcal{T}_k (\theta_0 ) \to \theta_0$ in $L^1(\Omega)$. Then $\theta_k$ is an approximate solution of
\begin{equation}
\left\{
\begin{array}{ll}
(\theta_k)_t -\Delta \theta_k = f_k & \mbox{in } Q,\\
\frac{\partial \theta_k}{\partial \vc{n}} = 0 & \mbox{on } \partial\Omega\times (0,T),\\
\theta_k(\cdot,0) = \mathcal{T}_k(\theta_0) & \mbox{on } \Omega,
\end{array}
\right.
\label{eq:heatBapp}
\end{equation}
where $\mathcal{T}_k (\cdot)$ is a standard truncation operator defined in \eqref{Tk}.

\begin{lemat}
Let $\{f_k\}$ be uniformly bounded in $L^1(\Omega\times (0,T))$ and $\theta_0$ belongs to $L^1(\Omega)$. Then, for $q < \frac{2(N +1)-N}{N+1}$ ($q < \frac{5}{4}$ for $N = 3$) the sequence $\{\theta_k\}$ is uniformly bounded in $L^q (0, T, W^{1,q} (\Omega))$. Moreover, if $\{f_k\}$ converges weakly to $f$ in $L^1(\Omega\times (0,T))$ then $\theta$ belongs to $L^q (0, T, W^{1,q} (\Omega)) \cap C([0, T ], W^{-2,2} (\Omega))$ and it is solution to the system \eqref{eq:heatB}.
\label{lm:BG1}
\end{lemat}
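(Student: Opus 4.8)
The statement is the parabolic Boccardo--Gallou\"et estimate with Neumann boundary data, so the plan is to follow the classical strategy of Boccardo and Gallou\"et: derive two $k$-uniform a priori bounds for the regular approximations $\theta_k$ solving \eqref{eq:heatBapp} --- an $L^\infty(0,T;L^1)$ mass bound and a truncated gradient bound --- then combine them by a measure-theoretic interpolation to reach the exponent $q<(N+2)/(N+1)$, and finally pass to the limit using the linearity of the equation together with the weak $L^1$ convergence of $f_k$.

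First I would establish the mass bound. Testing \eqref{eq:heatBapp} with a smooth approximation of $\mathrm{sgn}(\theta_k)$ and letting the regularisation tend to zero, the Laplacian term has a favourable sign and the right-hand side is controlled by $\|f_k\|_{L^1}$, so that
\[
\sup_{t\in[0,T]}\|\theta_k(t)\|_{L^1(\Omega)}\le \|\mathcal{T}_k(\theta_0)\|_{L^1(\Omega)}+\|f_k\|_{L^1(Q)}\le \|\theta_0\|_{L^1(\Omega)}+C,
\]
uniformly in $k$, where $Q:=\Omega\times(0,T)$. Next, for $\lambda>0$ I would test with the truncation $\mathcal{T}_\lambda(\theta_k)$. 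Writing $\Theta_\lambda$ for the primitive of $\mathcal{T}_\lambda$ with $\Theta_\lambda(0)=0$, integrating in time turns the time-derivative term into $\int_\Omega\Theta_\lambda(\theta_k(T))\dx-\int_\Omega\Theta_\lambda(\mathcal{T}_k(\theta_0))\dx$ with $\Theta_\lambda\ge0$; the elliptic term yields $\int_Q|\nabla\mathcal{T}_\lambda(\theta_k)|^2\dxdt$; and the right-hand side is bounded by $\lambda\|f_k\|_{L^1(Q)}$ since $|\mathcal{T}_\lambda|\le\lambda$. Using $0\le\Theta_\lambda(\mathcal{T}_k(\theta_0))\le\lambda|\mathcal{T}_k(\theta_0)|$ to absorb the initial contribution, this gives the key truncated gradient estimate
\[
\int_Q|\nabla\mathcal{T}_\lambda(\theta_k)|^2\dxdt\le C\lambda\qquad\text{for all }\lambda>0,
\]
with $C$ independent of $k$ and $\lambda$.

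The heart of the argument is then the interpolation step, which I expect to be the main obstacle. Applying the Gagliardo--Nirenberg--Sobolev inequality to $\mathcal{T}_\lambda(\theta_k)(t)$ in space and combining with the $L^\infty(0,T;L^1)$ bound, one obtains that $\theta_k$ is bounded in $L^r(Q)$ for every $r<(N+2)/N$. To pass from truncated gradients to a genuine gradient bound I would estimate the super-level sets by a dyadic decomposition $A_\lambda:=\{\lambda\le|\theta_k|<2\lambda\}$: the truncated gradient bound controls $\int_{A_\lambda}|\nabla\theta_k|^2\le C\lambda$, while Chebyshev together with the $L^r$ bound gives $\mathrm{meas}(A_\lambda)\le C\lambda^{-r}$. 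Splitting $\int_Q|\nabla\theta_k|^q$ over the sets $A_{2^j}$, applying H\"older with exponent $2/q$ on each gives a term of order $\lambda^{q/2}\lambda^{-r(1-q/2)}$, and the dyadic sum converges precisely when $q<2r/(1+r)$; letting $r\uparrow(N+2)/N$ this threshold tends to $(N+2)/(N+1)$, which yields the claimed uniform bound of $\{\theta_k\}$ in $L^q(0,T;W^{1,q}(\Omega))$. Reaching the sharp exponent and checking the summation threshold is the delicate bookkeeping here.

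Finally, for the limit passage I would extract a subsequence with $\theta_k\rightharpoonup\theta$ weakly in $L^q(0,T;W^{1,q}(\Omega))$. Since the equation is linear in $\theta_k$ and $f_k\rightharpoonup f$ weakly in $L^1(Q)$, one passes to the limit directly in the weak formulation, so $\theta$ solves \eqref{eq:heatB}. For the time regularity I would note that, because $N=3$ gives $W^{2,2}(\Omega)\hookrightarrow C(\overline{\Omega})$ and hence $L^1(\Omega)\hookrightarrow W^{-2,2}(\Omega)$, the equation shows $(\theta_k)_t=\Delta\theta_k+f_k$ is bounded in $L^1(0,T;W^{-2,2}(\Omega))$; this makes $\{\theta_k\}$ equicontinuous in $C([0,T];W^{-2,2}(\Omega))$, so the limit $\theta$ inherits membership in $C([0,T];W^{-2,2}(\Omega))$ and the initial datum is attained. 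The one point demanding care in this last step is that weak $L^1$ convergence of $f_k$ (equivalently, by Dunford--Pettis, their equi-integrability) is exactly what is assumed, so no strong compactness of the right-hand side is needed thanks to linearity.
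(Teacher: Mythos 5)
Your proposal follows essentially the same route as the paper, which only sketches the argument and defers the details to Boccardo--Gallou\"et \cite{Boccardo} and \cite{PhDFilip}: test with truncations $\mathcal{T}_\lambda(\theta_k)$ to get the bound $\int_Q|\nabla\mathcal{T}_\lambda(\theta_k)|^2\,{\rm d}x\,{\rm d}t\le C\lambda$, combine with the $L^\infty(0,T;L^1)$ bound and a dyadic level-set decomposition whose series converges exactly for $q<(N+2)/(N+1)$, and pass to the limit using weak $L^1$ convergence of $f_k$ and the linearity of the equation; your bookkeeping of the exponents is correct.

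One step is misstated, though the conclusion is easily rescued: a uniform bound on $(\theta_k)_t$ in $L^1(0,T;W^{-2,2}(\Omega))$ does \emph{not} give equicontinuity of $\{\theta_k\}$ in $C([0,T];W^{-2,2}(\Omega))$ --- it only gives a uniform modulus of the form $\|\theta_k(t)-\theta_k(s)\|_{W^{-2,2}}\le\int_s^t\|(\theta_k)_\tau\|_{W^{-2,2}}\,{\rm d}\tau$, and without equi-integrability of the integrands (which mere $L^1$ boundedness does not provide) this is not a uniform modulus of continuity. The standard fix, consistent with what the paper intends, is to obtain the time regularity from the \emph{limit} equation: since $\theta\in L^q(0,T;W^{1,q}(\Omega))$ and $f\in L^1(Q)$, and since for $N=3$ one has $W^{2,2}(\Omega)\hookrightarrow C(\overline\Omega)\cap W^{1,q'}(\Omega)$ for a suitable choice of $q\in[6/5,5/4)$, the equation yields $\theta_t=\Delta\theta+f\in L^1(0,T;W^{-2,2}(\Omega))$ in the distributional sense, so $\theta$ has an absolutely continuous (hence continuous) representative with values in $W^{-2,2}(\Omega)$, and the initial datum is attained by passing to the limit in the weak formulation with test functions not vanishing at $t=0$.
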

The idea of Lemma \ref{lm:BG1} proof is simple. Firstly, we used truncation of approximate solutions as test function to prove uniform estimates in $L^q (0, T, W^{1,q} (\Omega))$. The vale of $q$ is a consequence of summable of series which appear during the estimates. The uniform boundedness of this sequence gives us pointwise convergence of temperature. To prove the second part of lemma we need to have at least weak convergence of right-hand side in $L^1(\Omega\times(0,T))$. For Norton-Hoff-type model with linear thermal expansion the right-hand side function is defined in the following way
\begin{equation}
f_k = \ten{G}(\tilde{\theta}+\theta_k, \tilde{\ten{T}}^d + \ten{T}^d_k):(\tilde{\ten{T}}^d + \ten{T}^d_k) - \alpha \mbox{div} (\vc{u}_k)_t.
\end{equation}
To obtain the convergence of this sequence we used a pointwise convergence of sequence $\{\theta_k\}$, see Section \ref{sec:2.3}. Finally, using compactness argument we get that $\theta  \in L^q (0, T, W^{1,q} (\Omega)) \cap C([0, T ], W^{-2,2} (\Omega))$. The original idea of such estimates (for Dirichlet boundary) problem was presented in \cite{Boccardo}. Proof of Lemma \ref{lm:BG1} (heat equation with Neumann boundary condition) may be found in \cite{PhDFilip}. 

\end{appendix}


\begin{thebibliography}{10}

\bibitem{Alber}
H.-D. Alber.
\newblock {\em Materials with memory}.
\newblock Lecture Notes in Math. 1682. Springer, Berlin Heidelberg, New York,
  1998.

\bibitem{Alt}
H.~W. Alt.
\newblock {\em Lineare Funktionalanalysis}.
\newblock Springer-Verlag, 2006.

\bibitem{Bartczak}
L.~Bartczak.
\newblock Mathematical analysis of a thermo-visco-plastic model with
  {B}odner-{P}artom constitutive equations.
\newblock {\em Journal of Mathematical Analysis and Applications}, 385(2):961
  -- 974, 2012.

\bibitem{BR1}
S.~Bartels and T.~Roub{\'{\i}}{\v{c}}ek.
\newblock Thermoviscoplasticity at small strains.
\newblock {\em ZAMM Z. Angew. Math. Mech.}, 88(9):735--754, 2008.

\bibitem{BR}
S.~Bartels and T.~Roub\'i\v{c}ek.
\newblock Thermo-visco-elasticity with rate-independent plasticity in isotropic
  materials undergoing thermal expansion.
\newblock {\em ESAIM Math. Model. Numer. Anal.}, 45(3):477--504, 2011.

\bibitem{Blanchard}
D.~Blanchard.
\newblock Truncations and monotonicity methods for parabolic equations.
\newblock {\em Nonlinear Analysis: Theory, Methods and Applications},
  21(10):725 -- 743, 1993.

\bibitem{BlanchardGuibe97}
D.~Blanchard and O.~Guib{\'e}.
\newblock Existence d'une solution pour un syst\`eme non lin\'eaire en
  thermovisco\'elasticit\'e.
\newblock {\em C. R. Acad. Sci. Paris S\'er. I Math.}, 325(10):1125--1130,
  1997.

\bibitem{BlanchardGuibe00}
D.~Blanchard and O.~Guib{\'e}.
\newblock Existence of a solution for a nonlinear system in
  thermoviscoelasticity.
\newblock {\em Adv. Differential Equations}, 5(10-12):1221--1252, 2000.

\bibitem{BlanchardMurat}
D.~Blanchard and F.~Murat.
\newblock Renormalised solutions of nonlinear parabolic problems with l1 data:
  existence and uniqueness.
\newblock {\em Proceedings of the Royal Society of Edinburgh: Section A
  Mathematics}, 127:1137--1152, 1997.

\bibitem{Boccardo}
L.~Boccardo and T.~Gallou\"{e}t.
\newblock Non-linear elliptic and parabolic equations involving measure data.
\newblock {\em Journal of Functional Analysis}, 87(1):149 -- 169, 1989.

\bibitem{Brezis}
H.~Brezis.
\newblock {\em Functional Analysis, Sobolev Spaces and Partial Differential
  Equations}.
\newblock Universitext. Springer-Verlag New York, 2011.

\bibitem{BFM}
M.~Bul\'\i\v{c}ek, E.~Feireisl, and J.~M\'{a}lek.
\newblock A {N}avier-{S}tokes-{F}ourier system for incompressible fluids with
  temperature dependent material coefficients.
\newblock {\em Nonlinear Analysis: Real World Applications}, 10(2):992 -- 1015,
  2009.

\bibitem{BMR}
M.~Bul\'\i\v{c}ek, J.~M\'{a}lek, and K.~R. Rajagopal.
\newblock Mathematical analysis of unsteady flows of fluids with pressure,
  shear-rate, and temperature dependent material moduli that slip at solid
  boundaries.
\newblock {\em SIAM Journal on Mathematical Analysis}, 41(2):665--707, 2009.

\bibitem{ChelNeffOwczarek14}
K.~Che{\l}mi\'{n}ski, P.~Neff, and S.~Owczarek.
\newblock The {A}rmstrong-{F}rederick cyclic hardening plasticity model with
  {C}osserat effects.
\newblock {\em J. Diff. Equations}, 256(11):3497--3523, 2014.

\bibitem{ChelminskiOwczarekthermoII}
K.~Che{\l}mi{\'n}ski and S.~Owczarek.
\newblock Renormalised solutions in thermo-visco-plasticity for a
  {N}orton-{H}off type model. {P}art {II}: the limit case.
\newblock {\em Nonlinear Anal. Real World Appl.}, 31:643--660, 2016.

\bibitem{ChelminskiOwczarekthermoI}
K.~Che{\l}mi{\'n}ski and S.~Owczarek.
\newblock Renormalized solutions in thermo-visco-plasticity for a
  {N}orton--{H}off type model. {P}art {I}: {T}he truncated case.
\newblock {\em Nonlinear Anal. Real World Appl.}, 28:140--152, 2016.

\bibitem{ChelRacke}
K.~Che{\l}mi{\'n}ski and R.~Racke.
\newblock Mathematical analysis of thermoplasticity with linear kinematic
  hardening.
\newblock {\em J. Appl. Anal.}, 12(1):37--57, 2006.

\bibitem{DuvautLions}
G.~Duvaut and J.L. Lions.
\newblock {\em Les in\'equations en m\'ecanique et en physique}.
\newblock Dunod, Paris, 1972.

\bibitem{Evans}
L.~C. Evans.
\newblock {\em Partial Differential Equations}.
\newblock American Math Society, 1998.

\bibitem{GreenNaghdi}
A.~E. Green and P.~M. Naghdi.
\newblock A general theory of an elastic-plastic continuum.
\newblock {\em Archive for Rational Mechanics and Analysis}, 18:251--281, 1965.

\bibitem{GKSG}
P.~Gwiazda, F.~Z. Klawe, and A.~\'{S}wierczewska Gwiazda.
\newblock Thermo-visco-elasticity for the mróz model in the framework of
  thermodynamically complete systems.
\newblock {\em Contin. Dyn. Syst. Ser. S}, 7(5):981 -- 991, 2014.

\bibitem{1}
P.~Gwiazda, F.~Z. Klawe, and A.~\'Swierczewska-Gwiazda.
\newblock Thermo-visco-elasticity for {N}orton-{H}off-type models.
\newblock {\em Nonlinear Analysis: Real World Applications}, 26(0):199 -- 228,
  2015.

\bibitem{Haupt}
P.~Haupt.
\newblock {\em Continuum Mechanics and Theory of Materials}.
\newblock Advanced Texts in Physics. Springer-Verlag Berlin Heidelberg, 2002.

\bibitem{JR}
S.~Jiang and R.~Racke.
\newblock {\em Evolution equations in thermoelasticity}.
\newblock Chapman \& Hall/CRC, Boca Raton, 2000.

\bibitem{johnson1}
C.~Johnson.
\newblock Existence theorems for plasticity problem.
\newblock {\em J. Math. Pures Appl.}, 55:431--444, 1976.

\bibitem{wachtman}
J.~B.~Wachtman Jr., T.~G. Scuderi, and G.~W. Cleek.
\newblock Linear thermal expansion of aluminum oxide and thorium oxide from
  $100^{\circ}$ to $1100^{\circ}${K}.
\newblock {\em Journal of the American Ceramic Society}, 45:319--323, 1962.

\bibitem{PhDFilip}
F.~Z. Klawe.
\newblock {\em Mathematical analysis of thermo-visco-elastic models}.
\newblock PhD thesis, University of Warsaw, 2015.

\bibitem{tve-Orlicz}
F.~Z. Klawe.
\newblock Thermo-visco-elasticity for models with growth conditions in {O}rlicz
  spaces.
\newblock {\em Topol. Methods Nonlinear Anal.}, 43(2):297 -- 322, 2016.

\bibitem{KlaweOwczarek}
F.~Z. Klawe and S.~Owczarek.
\newblock Thermo-visco-elasticity for {N}orton-{H}off-type models with
  {C}osserat effects.
\newblock {\em Mathematical Methods in the Applied Sciences}, pages n/a--n/a,
  2016.
\newblock mma.3890.

\bibitem{LandauLifshitz}
L.D. Landau and E.M. Lifshitz.
\newblock {\em Theory of Elasticity}.
\newblock Pergamon Press, 1970 (7th edition).

\bibitem{Lyon}
K.~G. Lyon, G.~L. Salinger, C.~A. Swenson, and G.~K. White.
\newblock Linear thermal expansion measurements on silicon from 6 to 340 {K}.
\newblock {\em J. Appl. Phys.}, 48(865), 1977.

\bibitem{maleknecas}
J.~M\'alek, J.~Ne\v{c}as, M.~Rokyta, and M.~R\r{u}\v{z}i\v{c}ka.
\newblock {\em Weak and measure-valued solutions to evolutionary PDEs}.
\newblock Chapman \& Hall, London, 1996.

\bibitem{Martinek}
C.~Martinek and F.~A. Hummel.
\newblock Linear thermal expansion of three tungstates authors.
\newblock {\em Journal of the American Ceramic Society}, 51:227 -- 228, 1968.

\bibitem{owcz2}
S.~Owczarek.
\newblock Convergence of a monotonisation procedure for a non-monotone
  quasi-static model in poroplasticity.
\newblock {\em J. Math. Anal. Appl.}, 364(2):1541--1563, 2010.

\bibitem{RouBart2}
T.~Roub{\'{\i}}{\v{c}}ek.
\newblock Thermodynamics of perfect plasticity.
\newblock {\em Discrete Contin. Dyn. Syst. Ser. S}, 6(1):193--214, 2013.

\bibitem{strauss}
W.~Strauss.
\newblock {\em Partial differential equations}.
\newblock Wiley, 2007.

\bibitem{suquet1}
P.~Suquet.
\newblock {\em Existence and regularity of solution for plasticity problems}.
\newblock in Variational Methods in Solid Mechanics, Pergamon Press, Oxford,
  1980.

\bibitem{suquet2}
P.~Suquet.
\newblock Sur les \'equations de la plasticit\'e: existence et r\'egularit\'e
  des solutions.
\newblock {\em J. M\'ecanique}, 20:3--39, 1981.

\bibitem{temam1}
R.~Temam.
\newblock {\em Mathematical Problems in Plasticity}.
\newblock Gauthier-Villars, Paris-New York, 1984.

\bibitem{temam2}
R.~Temam.
\newblock A generalized {N}orton-{H}off model and the {P}randtl-{R}euss law of
  plasticity.
\newblock {\em Archive for Rational Mechanics and Analysis}, 95(2):137--183,
  1986.

\bibitem{temammiranville}
R.~M. Temam and A.~M. Miranville.
\newblock {\em Mathematical modeling in continuum mechanics}.
\newblock Cambridge University Press, 2005.

\bibitem{Valent}
T.~Valent.
\newblock {\em Boundary Value Problems of Finite Elasticity: Local Theorems on
  Existence, Uniqueness, and Analytic Dependence on Data}.
\newblock Springer Publishing Company, Incorporated, 1st edition, 2011.

\bibitem{zeidlerB}
E.~Zeidler.
\newblock {\em Nonlinear Functional Analysis II/B -- Nonlinear Monotone
  Operators}.
\newblock Springer--Verlag, Berlin--Heidelberg--New York, 1990.

\end{thebibliography}

\end{document}